\theoremstyle{plain} 
\newtheorem{Lemma}{Lemma}[section] \newtheorem{Thm}[Lemma]{Theorem} \newtheorem{Prop}[Lemma]{Proposition}\newtheorem{Cor}[Lemma]{Corollary} \newtheorem{Conj}[Lemma]{Conjecture}
\theoremstyle{definition} \newtheorem{Defn}[Lemma]{Definition} \newtheorem{Ex}[Lemma]{Example}
\theoremstyle{remark} \newtheorem{Rem}[Lemma]{Remark} 
\numberwithin{equation}{section}
\newcommand{\ZZ}{\mathbb{Z}}
\newcommand{\PP}{\mathbb{P}}
\newcommand{\CC}{\mathbb{C}}
\newcommand{\QQ}{\mathbb{Q}}
\newcommand{\RR}{\mathbb{R}}
\newcommand{\OO}{\mathcal{O}}
\newcommand{\eps}{\varepsilon}
\newcommand{\dd}{\partial}
\newcommand{\inj}{\hookrightarrow}
\newcommand{\sse}{\subset}
\begin{document}
\author{Ketil Tveiten}
\title{Period Integrals and Mutation}
\address{Ketil Tveiten\\Department of Mathematics\\Stockholm University\\ 106 91 Stockholm.}
\email{ktveiten@math.su.se}

\maketitle

\begin{abstract}
Let $f$ be a Laurent polynomial in two variables, whose Newton polygon strictly contains the origin and whose vertices are primitive lattice points, and let $L_f$ be the minimal-order differential operator that annihilates the period integral of $f$. We prove several results about $f$ and $L_f$ in terms of the Newton polygon of $f$ and the combinatorial operation of \emph{mutation}, in particular we give an in principle complete description of the monodromy of $L_f$ around the origin. Special attention is given to the class of \emph{maximally mutable} Laurent polynomials, which has applications to the conjectured classification of Fano manifolds via mirror symmetry.
\end{abstract}

\section{Introduction}\label{sec:intro}

Let $N\simeq\ZZ^d$ be a lattice, and $\CC[N]$ the ring of Laurent polynomials in $d$ variables. Let $P\sse N$ be a lattice polytope, and let $f(a,x)=\sum_{m\in P}a_mx^m$ be a generic Laurent polynomial with Newton polytope $Newt(f)=P$. For such an $f$, and $C$ an $d$-cycle in $H_d(\{x\in (\CC^*)^d|f(x)\neq 0\},\CC)$, consider the integral
\[
\phi_f(a)=\int_{C}\frac{1}{f(a,x)}\frac{dx_1}{x_1}\cdots \frac{dx_n}{x_n}.
\]
As a function of the coefficients $a_m$ of $f$, this integral satisfies a system of differential equations of the GKZ type, as follows: Let $P'$ be the image of $P$ under the embedding $N\inj \ZZ\times N$ ``at height 1'' given by $m\mapsto (1,m)$. Then $\phi(f)$ is a solution to the GKZ system $H_\gamma(P')$, where $\gamma=(0,\ldots,0,-1)$ (see \cite[5.4.2]{SST}). 

 An interesting variant of this is the \emph{classical period integral of $f$},
 \[
 \pi_f(t)=(\frac{1}{2\pi i})^n\int_{|x_1|=\cdots =|x_n|=\eps}\frac{1}{1-tf(a,x)}\frac{dx_1}{x_1}\cdots \frac{dx_n}{x_n}
 \]
which is a (possibly multivalued) holomorphic function of $t$ in a punctured disk around $t=0$. The GKZ system annihilating $\phi_f$ specializes to a Picard-Fuchs operator $L_f=\sum p_i(t)\nabla^i\in \CC[t,\nabla_t]$ (here $\nabla_t=t\dd_t$). This operator $L_f$ plays an important role in the conjectured classification of Fano manifolds via mirror symmetry, for a certain class of Laurent polynomials $f$ called \emph{maximally mutable} \cite{CCGGK,overarching}; understanding this class of Laurent polynomials and their associated operators in the two-dimensional case is the primary motivation for this paper, though we will present everything in as general terms as possible.

We will study the local behaviour of $L_f$ around the origin by using tools from toric geometry, and in particular the relation between properties of $L_f$ and the combinatorial data of the Newton polytope $P$, in particular through the operation of \emph{mutation} \cite{ACGK}; in addition, we try to extract some information about the global behaviour of $L_f$ from the combinatorics of $P$. The former problem we can in principle solve completely (although we only do it explicitly for simple cases), for the latter the best we can do is make plausible conjectures backed up by empirical data. We also prove several results about the relationship between Laurent polynomials and their Newton polygons.

%[[KANSKJE SITERE HOVEDRESULTATENE HER.]]

%%%%%%%%%%%%%%%%%%%%%%%%%%%%%%%%%%%%%%%%%%%
%
% SECTION: PRELIMINARIES
%
%%%%%%%%%%%%%%%%%%%%%%%%%%%%%%%%%%%%%%%%%%%

\section{Preliminaries}\label{sec:prelims}

We can of course explicitly compute $L_f$, by several different methods, the most efficient of which are recently developed by Lairez in \cite{Lairez}.
The computation is expensive, and for larger examples in practice undoable without fixing values for the coefficients. We hope to work around this problem by applying the Riemann-Hilbert correspondence; the operator $L_f$ is equivalent to the monodromy data of the solution sheaf $Sol(L_f,\OO)$, which is a local system away from the singular points of $L_f$. This lets us work with general $f$, unfortunately we pay the price of only being able to talk about the single singular point $t=0$. The problem of finding the remaining singular points and their local monodromy data is again a prohibitively expensive computational problem, though we will conjecture ways to extract some information in the final section.

As we will discuss the operation of \emph{mutation} of polygons and Laurent polynomials, we must restrict to the class of \emph{Fano polygons}, where this operation is well-behaved.

\begin{Defn}\label{Defn:Fano-polygon}
Let $N$ be a two-dimensional lattice and let $P\sse N\otimes \RR$ be a convex lattice polygon such that
\begin{enumerate}
\item $\dim P = 2$;
\item $0\in int(P)$, that is, the origin is a strict interior point of $P$; and
\item the vertices of $P$ are primitive lattice points.
\end{enumerate}
Such a polygon is called a \emph{Fano polygon} (see \cite{KN12}).
\end{Defn}

In the remainder, all polygons are assumed to be Fano polygons, and all Laurent polynomials are such that $Newt(f)$ is Fano. We consider two polygons to be equal if they differ by an element of $GL(N)$, and similarly consider two Laurent polynomials to be equal if they are related by an automorphism of $\CC[N]$ induced by an element of $GL(N)$. The one-dimensional faces of a polygon are called \emph{edges} and the zero-dimensional faces are called \emph{vertices}.

Let $P\sse N$ be a Fano polygon, let $M=Hom(N,\ZZ)$, and let $Y_P$ be the toric del Pezzo surface defined by the normal fan of $P$ in $M$. Recall that a variety $Y$ is \emph{Fano} if the anticanonical divisor $-K_Y$ is very ample (two-dimensional Fano varieties are usually called \emph{del Pezzo surfaces} for historical reasons). The rays $u_i$ generating the normal fan of $P$ are the inward normals to the edges $E_i$ of $P$, so for each edge $E_i$ of $P$, let $D_{E_i}=D_i$ denote the corresponding divisor on $Y_P$. There is a distinguished divisor on $Y_P$,
\[
D_P=\sum_ih_iD_i,
\]
here $h_i=-\langle u_i|E_i\rangle$ is the lattice height of the edge $E_i$. $D_P$ is very ample, and its global sections $\Gamma(Y_P,D_P)$ can be identified with the set of Laurent polynomials with Newton polygon $Newt(f)=P$ (see \cite[4.3.3/4.3.7]{Cox-Little-Schenk}). A section $f$ (or generally a linear system $\delta\sse \Gamma(Y_P,D_P)$ of sections) determines a rational map 
\[
\tau:=\frac1f:Y_P\dashrightarrow\PP^1.
\]
Let $\Gamma_\tau=\{(y,\tau(y))\in Y_P\times \PP^1\}$ be the graph of $\tau$. Via the embedding $Y_P\sse \Gamma_\tau$ there is an induced rational map $\Gamma_\tau\dashrightarrow\PP^1$, and if we resolve all singularities of $\Gamma_\tau$ (and generally any base points of $\delta$) we get a smooth surface $\widetilde{Y_P}$ such that the induced map $\widetilde{\tau}:\widetilde{Y_P}\to\PP^1$ is a morphism. Let $D$ be the pullback of $D_P$ to $\widetilde{Y_P}$. The fiber $X_0=\widetilde{\tau}^{-1}(0)$ is equal to the support of $D$, and the fiber $X_t$ over a general point $t\in \PP^1$ is smooth. 

Now by general $D$-module theory the solution sheaf $Sol(L_f)$ on $\PP^1$ is isomorphic to the constructible sheaf with fiber $H_1(X_t,\CC)$ at $t\in \PP^1$; here $H_1$ denotes \emph{homology with closed support} (also known as \emph{Borel-Moore homology}), and not the usual singular homology. Indeed, $L_f$ is a $D$-module theoretic direct image of the GKZ module, and from \cite{Borel} (VII.9.6, VIII.13.4 and VIII.14.5.1) we have that the solution complex $Sol(L_f)^\bullet$ is the exceptional direct image of the cohomology complex of that module, which works out to give the described constructible sheaf. In other words, we wish to find the monodromy of $H_1(X_t,\CC)$ around $t=0$.

%%%%%%%%%%%%%%%%%%%%%%%%%%%%%%%%%%%%%%%%%%%
%
% SECTION: MUTATION
%
%%%%%%%%%%%%%%%%%%%%%%%%%%%%%%%%%%%%%%%%%%%

\section{Mutation}\label{sec:mutation}

We introduce some notation and terminology which will remain in force for the remainder of the paper.

Let $P\sse N$ be as before, with vertices $p_i$ and edges $E_i$ with inward normal vectors $u_i\in M$, we number these so that $E_i$ is the edge between $p_i$ and $p_{i+1}$. The \emph{lattice height} of an edge $E_i$ is $-\langle u_i|E_i\rangle$, and the \emph{lattice width} of $E_i$ is $\langle u_{i-1}|p_{i}-p_{i+1}\rangle = \langle u_{i+1}|p_{i+1}-p_i\rangle$; the lattice width is equal to the number of lattice points on $E_i$ minus one. The following definition is due to Akhtar and Kasprzyk (see \cite{Al-Mo-singularity-content}).

\begin{Defn}\label{Defn:R/T-cones}
  Let $C\sse N$ be a primitive lattice cone of lattice height $h$ and lattice width $w$. If $h=w$, we say that $C$ is a \emph{primitive $T$-cone}. If $w$ is a positive multiple of $h$, we say that $C$ is a \emph{$T$-cone}. If $w$ is strictly less than $h$, we say that $C$ is an \emph{$R$-cone}.
\end{Defn}

Let $E$ be an edge of $P$ of height $h$ and width $w=hk+r$. The cone over $E$ from the origin may be subdivided into $k$ primitive $T$-cones and an $R$-cone of width $r$; we say that these cones are \emph{on the edge $E$}. There are $k+1$ ways to do this, so saying e.g. ``the $R$-cone on the edge $E$'' is strictly speaking not well-defined, but this observation is not relevant anywhere in what follows (i.e. any subdivision gives the same result), so we permit ourselves to abuse language in this way. The most important quantity here is the number of internal lattice points of $P$ lying in $R$-cones (or later, $f$-rigid cones, see \ref{Defn:f-mutation}), which does not depend on the subdivision, by \cite[2.3]{Al-Mo-singularity-content}. Whenever we say e.g. ``internal points of an $R$-cone'', it should always be understood to mean lattice points in $P$.

\begin{Defn}\label{Defn:type-of-cones}
  Let $C\sse N$ be a primitive lattice cone, with primitive spanning vectors $u$ and $v$. If $\{u,v\}$ is a lattice basis for $N$, we say that $C$ is \emph{smooth}. If $C$ is not smooth, there is a point $p\in N$ such that $p = \frac1r u+\frac{a}{r} v$, and $\{u,p\}$ and $\{v,p\}$ are lattice bases for $N$; in this case we say that $C$ is \emph{of type $\frac1r(1,a)$}.
\end{Defn}

\begin{Rem}
  The type of cones parallels the classification of cyclic quotient singularities; a cone of type $\frac1r(1,a)$ defines a toric variety isomorphic to the cyclic quotient singularity $\CC^2/\mu_r$, where $\mu_r$ acts with weight $(1,a)$. See \cite{Al-Mo-singularity-content} for further details on $R$- and $T$-cones, and the corresponding singularities, called $R$- and $T$-singularities. The operation of \emph{mutation}, which we will now describe, (conjecturally, see \cite{overarching}) corresponds to $\QQ$-Gorenstein deformation of toric varieties; roughly speaking $T$-cones (resp. $T$-singularities) are mutable (resp. $\QQ$-Gorenstein smoothable), while $R$-cones and $R$-singularities are rigid under mutation/deformation.
\end{Rem}

\begin{Defn}[\cite{ACGK}]\label{Defn:P-mutation}
  Let $P\in N$ be a Fano polygon, and let $E$ be an edge of $P$, with inward normal vector $u$, lattice height $h$, and lattice width $w=kh+e$, where $k>0,e\ge 0$ are integers (in other words, $E$ supports $k$ $T$-cones and an $R$-cone of width $e$), let $h'$ be the minimal lattice height (with respect to $u$) of the points in $P$, and let $F\in u^\bot\sse N$ be a primitive vector. For any $r\in \ZZ$ let $P_r=\{p\in P|u(p)=r\}$ be the points of $P$ at height $r$ with respect to $u$ (in particular, $P_{h}=E$). Notice that we can for each $0<r\le h$ write $P_r$ as a Minkowski sum $kr\cdot F + Q_r$, where $Q_r$ is some (possibly empty) polygon. The \emph{mutation of $P$} with respect to the mutation data $u,F$ is the polygon $P'=mut_u(P)$ defined by
\[
P'_{r}=\begin{cases} (k-1)r F+Q_r & 0\le r\le h\\ P_r+rF & h'\le r< 0.\end{cases}
\]
Intuitively, we are removing slices $rF$ from each positive height $r>0$, and adding slices $r'F$ at each negative height $r'<0$. Equivalently, we are contracting a $T$-cone from $E$, and putting in a $T$-cone on the opposite side of $P$.

Any polygons $P$,$Q$ related by a chain of mutations are said to be \emph{mutation-equivalent}.
\end{Defn}

We observe that $R$-cones are rigid under mutation: they do not have sufficient lattice width to permit mutation.

\begin{Ex}\label{Ex:mutation}
  Let $P$ be the Fano polygon with vertices $(-2,1),(-1,2),(3,2),(3,-1)$ and $(-2,-1)$. It has one $R$-cone of type $\frac13(1,1)$ (shaded dark grey), and nine $T$-cones. We will perform a mutation with factor $F=\{(-1,0),(0,0)\}$ (indicated by an arrow) and height function $h((x,y))=-y$, which will contract away the lightly shaded $T$-cone, and add a new $T$-cone on the other side of the polygon.
\[
  \begin{tikzpicture}[auto]
\fill[black!30] (0,0) -- (-2,1) -- (-1,2) -- (0,0);
\fill[black!10] (0,0) -- (-1,2) -- (1,2) -- (0,0);
\draw (-2,1) -- (-1,2) -- (3,2) -- (3,-1) -- (-2,-1) -- (-2,1);
\node (00) at (0,0) [shape=circle,fill=white,draw]{};
\node (m12) at (-1,2) {$\bullet$};
\node (02) at (0,2) {$\bullet$};
\node (12) at (1,2) {$\bullet$};
\node (22) at (2,2) {$\bullet$};
\node (32) at (3,2) {$\bullet$};
\node (m21) at (-2,1) {$\bullet$};
\node (m11) at (-1,1) {$\cdot$};
\node (01) at (0,1) {$\cdot$};
\node (11) at (1,1) {$\cdot$};
\node (21) at (2,1) {$\cdot$};
\node (31) at (3,1) {$\bullet$};
\node (m20) at (-2,0) {$\bullet$};
\node (m10) at (-1,0) {$\cdot$};
\node (10) at (1,0) {$\cdot$};
\node (20) at (2,0) {$\cdot$};
\node (30) at (3,0) {$\bullet$};
\node (m2m1) at (-2,-1) {$\bullet$};
\node (m1m1) at (-1,-1) {$\bullet$};
\node (0m1) at (0,-1) {$\bullet$};
\node (1m1) at (1,-1) {$\bullet$};
\node (2m1) at (2,-1) {$\bullet$};
\node (3m1) at (3,-1) {$\bullet$};

\draw[dashed] (00) to (m12);
\draw[dashed] (00) to (12);
\draw[dashed] (00) to (32);
\draw[dashed] (00) to (3m1);
\draw[dashed] (00) to (m21);
\draw[dashed] (00) to (m2m1);
\draw[dashed] (00) to (m1m1);
\draw[dashed] (00) to (1m1);
\draw[dashed] (00) to (0m1);
\draw[dashed] (00) to (2m1);
\draw[dashed] (00) to (3m1);
\draw[->,very thick] (00) to node {} (m10);
\node (flabel) at (-1,0) {$F$};

  \end{tikzpicture}
\]
After the mutation, we have this picture; the lightly shaded $T$-cone has been contracted, a new $T$-cone has been added to the opposite side, and the $R$-cone and the $T$-cone beneath it have been skewed to fit.
\[
  \begin{tikzpicture}[auto]
\fill[black!30] (0,0) -- (-1,1) -- (1,2) -- (0,0);
\fill[black!10] (0,0) -- (-2,-1) -- (-3,-1) -- (0,0);
\draw (-1,1) -- (1,2) -- (3,2) -- (3,-1) -- (-3,-1)--cycle;
\node (00) at (0,0) [shape=circle,fill=white,draw]{};
\node (12) at (1,2) {$\bullet$};
\node (22) at (2,2) {$\bullet$};
\node (32) at (3,2) {$\bullet$};
\node (m11) at (-1,1) {$\bullet$};
\node (01) at (0,1) {$\cdot$};
\node (11) at (1,1) {$\cdot$};
\node (21) at (2,1) {$\cdot$};
\node (31) at (3,1) {$\bullet$};
\node (m20) at (-2,0) {$\bullet$};
\node (m10) at (-1,0) {$\cdot$};
\node (10) at (1,0) {$\cdot$};
\node (20) at (2,0) {$\cdot$};
\node (30) at (3,0) {$\bullet$};
\node (m3m1) at (-3,-1) {$\bullet$};
\node (m2m1) at (-2,-1) {$\bullet$};
\node (m1m1) at (-1,-1) {$\bullet$};
\node (0m1) at (0,-1) {$\bullet$};
\node (1m1) at (1,-1) {$\bullet$};
\node (2m1) at (2,-1) {$\bullet$};
\node (3m1) at (3,-1) {$\bullet$};

\draw[dashed] (00) to (m11);
\draw[dashed] (00) to (12);
\draw[dashed] (00) to (32);
\draw[dashed] (00) to (2m1);
\draw[dashed] (00) to (1m1);
\draw[dashed] (00) to (0m1);
\draw[dashed] (00) to (m1m1);
\draw[dashed] (00) to (m2m1);
\draw[dashed] (00) to (m3m1);
\draw[dashed] (00) to (3m1);
  \end{tikzpicture}
\]
Observe that the numbers of $T$- and $R$-cones are unchanged, and the type of the $R$-cone is preserved.
\end{Ex}

\begin{Defn}[\cite{Al-Mo-singularity-content}]\label{Defn:singularity-content}
  Let $P$ be a Fano polygon, let $k$ be the number of $T$-cones in $P$ and $\mathcal{B}$ the list of types of $R$-cones in $P$, ordered cyclically. The set $\mathcal{B}$ is called the \emph{singularity basket} of $P$. The \emph{singularity content} of $P$ is the pair $(k,\mathcal{B})$.
\end{Defn}

Any mutation removes one $T$-cone and adds another, so the total number of $T$-cones is unchanged. The $R$-cones and their relative order is unchanged by mutation, so the singularity content is an invariant under mutation (see \cite{Al-Mo-singularity-content}). If the cyclical order isn't important, it may be useful to think of the singularity basket as a mere multiset; we will do this in Theorem \ref{Thm:monodromy-sing-content}.

\begin{Ex}
  The polygons in example Example \ref{Ex:mutation} have singularity content $(9,\{\frac13(1,1)\})$.
\end{Ex}

\begin{Defn}\label{Defn:f-mutation}
  Let $f$ be a Laurent polynomial with Newton polygon $P$, and let $P'$ be the mutation of $P$ with mutation data $u,F$. The map $\mu:x^a\mapsto x^a(\gamma+\eta x^F)^{\langle u|a\rangle}$ (where $a\in N, \gamma,\eta\in \CC$) defines an automorphism of $\CC(N)$, the rational functions in two variables, and is called a \emph{cluster transformation}. We say that $f$ is \emph{mutable} with respect to $u,(\gamma+\eta x^F)$ if $f'=f\circ\mu$ is in $\CC[N]$ (notice $Newt(\gamma+\eta x^F)=[0,F]$), i.e. is a Laurent polynomial, and in this case that $f'$ is a \emph{mutation of $f$}; the Newton polygon of $f'$ is $P'$. Any two Laurent polynomials related by a chain of mutations are said to be \emph{mutation equivalent}. 

We also say that $f$ is \emph{mutable over} the $T$-cone contracted by the mutation $P\mapsto P'$; we say that a cone over which $f$ is mutable is an \emph{$f$-mutable} cone, and a cone over which $f$ is not mutable is an \emph{$f$-rigid} cone, or if $f$ is understood simply call these \emph{mutable} and \emph{rigid} cones respectively.
\end{Defn}

Let us make explicit what's going on. For $f'$ to be in $\CC[N]$, we require the following: if $P_r$ are the points of $P$ at height $r$ as in Definition \ref{Defn:P-mutation}, let $f_r$ be the terms of $f$ corresponding to the points of $P_r$. To perform a mutation with factor $(\gamma+\eta x^F)$, it is neccessary that $(\gamma+\eta x^F)^r$ is a factor of $f_r$ for $0<r\le h$. The mutated polynomial $f'=mut(f)$ can be described by
\[
f'_r = f_r(\gamma+\eta x^F)^{-r}.
\]
It is clear that $Newt(mut(f))=mut(Newt(f))$. In particular, any mutation of $f$ gives an underlying mutation of $Newt(f)$.

The relevant fact for us is that mutation of $f$ preserves the classical period integral $\pi_f(t)$, and thus the Picard-Fuchs operator $L_f$. A proof of this fact can be found in \cite{ACGK}. The analysis of $\pi_f(t)$ and $L_f$ is then independent of which $f$ in the mutation class we use, which allows for a great deal of flexibility. In particular, we consider the class of \emph{maximally} mutable polynomials. We note that the following definition is a special case only valid for the two-dimensional case; for general dimension a somewhat more involved formulation must be used (see \cite{MaxMutMedAl}). The problems that occur in higher dimensions are not relevant to us, so we keep it simple here.

\begin{Defn}[\cite{MaxMutMedAl}]\label{Defn:max-mut}
  A Laurent polynomial $f$ is called \emph{maximally mutable} if whenever there is a sequence of mutations $Newt(f)=P_0\to P_1\to\cdots\to P_n$, there are Laurent polynomials $f_i$ with $f_0=f$ and $Newt(f_i)=P_i$, such that $f_i$ is mutable over the mutation $P_i\to P_{i+1}$, and $f_{i+1}$ is the resulting mutation of $f_i$.

If in addition $f$ has zero constant term, and for every edge $E$ of $Newt(f)$ of lattice height $h_E$ and lattice width $w_E$, the polynomial $f_E$ is (up to $GL(N)$) equal to $x_1^{h_E}(1+x_2)^{w_E}$ (i.e. $f$ has ``binomial edge coefficients''), $f$ is called \emph{standard maximally mutable}. 

We may for simplicity refer to maximally mutable Laurent polynomials as simply \emph{MMLP's}.
\end{Defn}

The standard MMLP's are of particular importance for the mirror symmetry classification of Fano manifolds. In that literature one usually considers what we call ``standard maximally mutable'' polynomials, with the additional condition that for lattice points on an edge internal to an $R$-cone, the corresponding coefficient is zero---this is called having \emph{$T$-binomial edge coefficients})---for these, the mutations will always be with factor $(1+x^F)$ (e.g. in \cite{overarching,CCGGK,Ale-Andrea}). In the remainder, we will work as generally as possible, but we will return to the specializations of coefficients in the final section.

Requiring that we can mutate $f$ across the whole graph of mutations of $Newt(f)$ means that we must ensure that for every $T$-cone of height $h$, the slices $f_r$ at height $0<r\le h$ must be divisible by some factor $(\gamma+\eta x^F)^r$, or in other words, the maximally mutable Laurent polynomials are those for whom the $f$-mutable cones are the $T$-cones, and the $f$-rigid cones are the $R$-cones. The process of finding the MMLP's for a given polygon $P$ is best illustrated with an example.

\begin{Ex}\label{Ex:max-mutable}
  Let $P$ be the polygon with vertices $(-1,2)$, $(1,2)$, $(2,1)$, $(2,-1)$, $(-2,-1)$ and $(-2,1)$; this has two $R$-cones of type $\frac13(1,1)$ and seven $T$-cones. It is easiest to show the process of finding the maximally mutable Laurent polynomials by labelling the vertices of $P$ by the associated coefficients. 
The cones are indicated; the $R$-cones are shaded grey, the $T$-cones are white. We begin with generic coefficients:
\[
  \begin{tikzpicture}[scale=1.4]
\fill[black!10] (0,0) -- (-2,1) -- (-1,2) -- (0,0);
\fill[black!10] (0,0) -- (2,1) -- (1,2) -- (0,0);

    \node (m12) at (-1,2) {$a_{-1,2}$};
  \node (02) at (0,2) {$a_{0,2}$};
  \node (12) at (1,2) {$a_{1,2}$};
  \node (m21) at (-2,1) {$a_{-2,1}$};
  \node (m11) at (-1,1) {$a_{-1,1}$};
  \node (01) at (0,1) {$a_{0,1}$};
  \node (11) at (1,1) {$a_{1,1}$};
  \node (m20) at (-2,0) {$a_{-2,0}$};
  \node (m10) at (-1,0) {$a_{-1,0}$};
  \node (00) at (0,0) {$a_{0,0}$};
  \node (10) at (1,0) {$a_{1,0}$};
  \node (m2m1) at (-2,-1) {$a_{-2,-1}$};
  \node (m1m1) at (-1,-1) {$a_{-1,-1}$};
\node (21) at (2,1) {$a_{2,1}$};
\node (20) at (2,0) {$a_{2,0}$};
\node (2m1) at (2,-1) {$a_{2,-1}$};
\node (0m1) at (0,-1) {$a_{0,-1}$};
\node (1m1) at (1,-1) {$a_{1,-1}$};

\draw[-] (m12) to (02) to (12) to (21) to (20) to (2m1) to (1m1) to (1m1) to (0m1) to (0m1) to (m1m1) to (m2m1) to (m20) to (m21) to (m12);

\draw[dashed] (00) to (m12);
\draw[dashed] (00) to (12);
\draw[dashed] (00) to (1m1);
\draw[dashed] (00) to (m2m1);
\draw[dashed] (00) to (21);
\draw[dashed] (00) to (0m1);
\draw[dashed] (00) to (2m1);
\draw[dashed] (00) to (m1m1);
\draw[dashed] (00) to (0m1);
\draw[dashed] (00) to (m21);

  \end{tikzpicture}
\]

First impose the factorization conditions along the edges, with a linear factor $(\gamma+\eta x)$ for each $T$-cone. This will determine the ``internal'' coefficients on the edges with $T$-cones of height 2, e.g. $a_{-1,2}+a_{0,2}x+a_{1,2}x^2=(\gamma+\eta x)^2$ (for some $\gamma,\eta$) implies that $a_{0,2}=2(a_{-1,2}a_{1,2})^\frac12$. In the same way, $a_{2,0}=2(a_{2,1}a_{2,-1})^\frac12$ and $a_{-2,0}=2(a_{-2,1}a_{-2,-1})^\frac12$. To reduce visual clutter, we rename the free parameters on the edges by $a,b,c,\ldots$.
\[
  \begin{tikzpicture}[scale=1.4]
\fill[black!10] (0,0) -- (-2,1) -- (-1,2) -- (0,0);
\fill[black!10] (0,0) -- (2,1) -- (1,2) -- (0,0);

    \node (m12) at (-1,2) {$a$};
  \node (02) at (0,2) {$2(ab)^\frac12$};
  \node (12) at (1,2) {$b$};
  \node (m21) at (-2,1) {$i$};
  \node (m11) at (-1,1) {$a_{-1,1}$};
  \node (01) at (0,1) {$a_{0,1}$};
  \node (11) at (1,1) {$a_{1,1}$};
  \node (m20) at (-2,0) {$2(hi)^\frac12$};
  \node (m10) at (-1,0) {$a_{-1,0}$};
  \node (00) at (0,0) {$a_{0,0}$};
  \node (10) at (1,0) {$a_{1,0}$};
  \node (m2m1) at (-2,-1) {$h$};
  \node (m1m1) at (-1,-1) {$g$};
\node (21) at (2,1) {$c$};
\node (20) at (2,0) {$2(cd)^\frac12$};
\node (2m1) at (2,-1) {$d$};
\node (0m1) at (0,-1) {$f$};
\node (1m1) at (1,-1) {$e$};

\draw[-] (m12) to (02) to (12) to (21) to (20) to (2m1) to (1m1) to (1m1) to (0m1) to (0m1) to (m1m1) to (m2m1) to (m20) to (m21) to (m12);

\draw[dashed] (00) to (m12);
\draw[dashed] (00) to (12);
\draw[dashed] (00) to (1m1);
\draw[dashed] (00) to (m2m1);
\draw[dashed] (00) to (21);
\draw[dashed] (00) to (0m1);
\draw[dashed] (00) to (2m1);
\draw[dashed] (00) to (m1m1);
\draw[dashed] (00) to (0m1);
\draw[dashed] (00) to (m21);

  \end{tikzpicture}
\]

We now require the polynomial $i\frac{y}{x^2}+a_{-1,1}\frac{y}{x}+a_{0,1}y+a_{1,1}xy+cx^2y$ along the $y=1$ row to be divisible by $a^\frac12+b^\frac12x$, the polynomial $a\frac{y^2}{x}+a_{-1,1}\frac{y}{x}+a_{-1,0}\frac1x+g\frac1{xy}$ along the $x=-1$ line to be divisible by $h^\frac12+i^\frac12x$, and the polynomial $bxy^2+a_{1,1}xy+a_{1,0}x+e\frac{x}{y}$ along the $x=1$ line to be divisible by $c^\frac12+d^\frac12x$. Solving the equations this imposes, we get 
\begin{itemize}
\item $a_{-1,0} = \frac{h^\frac12}{i^\frac12}a_{-1,1}-\frac{ah}{i}+\frac{g i^\frac12}{h^\frac12}$,
\item $a_{0,1} = \frac{b^\frac12}{a^\frac12} a_{-1,1}+\frac{a^\frac12}{b^\frac12} a_{1,1}-\frac{a c}{b}-\frac{b i}{a}$,
\item $a_{1,0}= \frac{d^\frac12}{c^\frac12} a_{1,1}-\frac{b d}{c}+\frac{c^\frac12 e}{d^\frac12}$.
\end{itemize}

If for simplicity we let $f$ be standard maximally mutable, setting the constant term to zero and imposing binomial edge coefficients, we get the following picture (where we set $a_{-1,1}=p,a_{1,1}=q$ to reduce visual clutter):

\[
  \begin{tikzpicture}[scale=1.4]
\fill[black!10] (0,0) -- (-2,1) -- (-1,2) -- (0,0);
\fill[black!10] (0,0) -- (2,1) -- (1,2) -- (0,0);

    \node (m12) at (-1,2) {1};
  \node (02) at (0,2) {2};
  \node (12) at (1,2) {1};
  \node (m21) at (-2,1) {1};
  \node (m11) at (-1,1) {$p$};
  \node (01) at (0,1) {$\begin{matrix}p+q\\-2\end{matrix}$};
  \node (11) at (1,1) {$q$};
  \node (m20) at (-2,0) {2};
  \node (m10) at (-1,0) {$p+3$};
  \node (00) at (0,0) {0};
  \node (10) at (1,0) {$q+3$};
  \node (m2m1) at (-2,-1) {1};
  \node (m1m1) at (-1,-1) {4};
\node (21) at (2,1) {1};
\node (20) at (2,0) {2};
\node (2m1) at (2,-1) {1};
\node (0m1) at (0,-1) {6};
\node (1m1) at (1,-1) {4};

\draw[-] (m12) to (02) to (12) to (21) to (20) to (2m1) to (1m1) to (1m1) to (0m1) to (0m1) to (m1m1) to (m2m1) to (m20) to (m21) to (m12);

\draw[dashed] (00) to (m12);
\draw[dashed] (00) to (12);
\draw[dashed] (00) to (1m1);
\draw[dashed] (00) to (m2m1);
\draw[dashed] (00) to (21);
\draw[dashed] (00) to (0m1);
\draw[dashed] (00) to (2m1);
\draw[dashed] (00) to (m1m1);
\draw[dashed] (00) to (0m1);
\draw[dashed] (00) to (m21);
\end{tikzpicture}
\]

Observe that there are 12 free parameters $a,b,c,d,e,f,g,h,i,a_{-1,1},a_{1,1}a_{0,0}$ in the coefficients; there is one for each vertex, one for the origin, and one for each point of $P$ not internal to a $T$-cone; in the standard case there are only free parameters corresponding to the internal points in the $R$-cones.
\end{Ex}

This last observation is true in general, and a proof is given in \cite{MaxMutMedAl}.

\begin{Prop}\label{Prop:free-params}
The number of free parameters in a Laurent polynomial with $Newt(f)=P$ is equal to the number of lattice points in $P$ not internal to an $f$-mutable cone. In particular, the number of free parameters in a maximally mutable Laurent polynomial with $Newt(f)=P$ is equal to the number of lattice points in $P$ not internal to a $T$-cone, and the number of free parameters of a standard MMLP is equal to the number of lattice points in $P$ internal to an $R$-cone.
\end{Prop}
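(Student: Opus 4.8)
The plan is to start from the tautological count and whittle it down. A Laurent polynomial with $Newt(f)=P$ is specified by its coefficient vector $(a_m)_{m\in P\cap N}$, so before imposing any conditions there are exactly $\#(P\cap N)$ parameters. Thus it suffices to show that the mutability requirements cut out precisely the coefficients indexed by lattice points internal to $f$-mutable cones, in the strong sense that these coefficients can be solved for, one at a time, as algebraic functions of the remaining coefficients, while the remaining coefficients stay free. I would organise the solving \emph{edge by edge, and then slice by slice from the edge inward}, exploiting that the defining conditions of Definition \ref{Defn:f-mutation} are local to a single maximal cone and are graded by the height $r$.

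Fix an edge $E$ of height $h$ carrying the $f$-mutable (for an MMLP, primitive $T$-) cones $C_1,\dots,C_k$, together with a possible rigid remainder. On the top slice $r=h$, divisibility of $f_E$ by the appropriate product of $h$-th powers of binomials both determines the edge-internal coefficients of each $C_j$ and \emph{fixes the binomial factors} $(\gamma_j+\eta_j x^F)$. Descending to a slice $0<r<h$, divisibility of $f_r$ by the now-fixed factor $(\gamma_j+\eta_j x^F)^r$ is $r$ \emph{linear} conditions on that slice, each of which I would solve for the coefficient at the corresponding interior lattice point of $C_j$. The bookkeeping is closed by a Pick's-theorem count: a primitive mutable cone of height $h$ has
\[
I_h=\tfrac12(h-1)(h+2)
\]
lattice points internal to it (those in the cone but off the two bounding rays and off the origin), and the tally of conditions $(h-1)+\sum_{r=1}^{h-1}r=I_h$ shows that each mutable cone produces exactly as many conditions as it forces coefficients. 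So a mutable cone consumes precisely its own internal coefficients.

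It then remains to assemble the local pictures and read off the three assertions. Every lattice point of $P$ is either the origin (the common apex, on the boundary of every cone), a vertex, a non-vertex edge point, or an interior point of $P$; an interior point off the rays lies in a unique $T$- or $R$-cone, while ray points and division points lie on cone boundaries. Matching this stratification against the previous paragraph, the determined coefficients are exactly those at points internal to $f$-mutable cones, which is the first assertion. The second is immediate from the characterisation recorded after Definition \ref{Defn:max-mut}, that an MMLP is precisely one whose $f$-mutable cones are its $T$-cones. For the third, passing to \emph{standard} MMLP's forces the constant term to vanish, which removes the origin from the free parameters, and imposes binomial edge coefficients, which pin down every coefficient supported on an edge (vertices, division points, and edge-interior points of $R$-cones alike); what survives is exactly the set of lattice points interior to the $R$-cones.

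The main obstacle is the \textbf{independence and exact fit of the conditions}, i.e.\ justifying the triangular solving without overdetermination. Three points need care: that the binomial factors fixed on the top slice really make each lower-slice divisibility condition solvable for the claimed interior coefficient (the coefficient of that variable in the linear condition must be generically nonzero); that when $k>1$ mutable cones sit on one edge, or when the cross-section of a cone at height $r$ has non-lattice endpoints (so that the per-height point count is not literally $r$ even though the total is $I_h$), no condition is wasted and the $R$-cone and shared-ray coefficients are left genuinely free; and that no interior point lying on a ray — which can occur when a division point is non-primitive, so that the ray to it passes through interior lattice points — is accidentally determined. This last case is the fiddliest and is exactly what must be controlled to keep the standard-MMLP count equal to the number of $R$-cone interior points. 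Once these are settled, the dimension of the space of admissible $f$ is exactly the claimed number of non-internal lattice points.
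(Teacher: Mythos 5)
First, a caveat on the comparison itself: the paper does not prove Proposition \ref{Prop:free-params} at all --- it defers to \cite{MaxMutMedAl}, which is listed as in preparation --- so your argument can only be judged on its own terms. Your skeleton is the natural one and the bookkeeping is right: the identity $I_h=\tfrac12(h-1)(h+2)$ for the number of lattice points of $P$ internal to a $T$-cone of height $h$ is correct, it matches the paper's Example \ref{Ex:max-mutable} ($18$ lattice points, $6$ of them internal to $T$-cones, $12$ parameters), and the tally $(h-1)+\sum_{r=1}^{h-1}r=I_h$ correctly matches the number of divisibility conditions of Definition \ref{Defn:f-mutation} to the internal points, cone by cone. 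But there is a genuine gap, and it is the one you flag yourself: the entire mathematical content of the statement is the generic \emph{independence} of these conditions --- that the system is never overdetermined, that each slice can really be solved for the designated internal coefficients, and that no coefficient at a non-internal point gets accidentally constrained. Conceding this as an unresolved ``main obstacle'' and writing ``once these are settled'' leaves a plan plus a correct dimension count, not a proof.

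The good news is that the gap is closable, and two of your three worries are vacuous for Fano polygons. (a) Division points are automatically primitive: a division point has the form $v+jhF$ with $v$ a (primitive) vertex, $F$ primitive along $E$ and $h$ the lattice height; if $d$ divides $v+jhF$ then pairing with the primitive inward normal $u$ gives $d\mid\langle u|v+jhF\rangle=-h$, hence $d\mid jhF$, hence $d\mid v$, so $d=1$. Consequently no bounding or dividing ray of the sub-cones on $E$ contains a lattice point strictly between the origin and $E$, so your worry about rays through interior points cannot arise, and moreover the cross-section of each $T$-cone at height $r<h$ contains \emph{exactly} $r$ lattice points, all internal to that cone --- the per-slice count is literally $r$, so the triangular structure you wanted does exist slice by slice. (b) Solvability at each slice is then a concrete linear-algebra fact: at height $r$, mutability over the $k$ $T$-cones on $E$ (factors $(\gamma_j+\eta_j x)$ already fixed, generically distinct, from the top slice) imposes $f_r\equiv 0 \bmod \prod_j(\gamma_j+\eta_j x)^r$, i.e.\ $kr$ linear conditions, while by (a) the height-$r$ points internal to these cones form a \emph{contiguous} block of $kr$ exponents. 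A nonzero Laurent polynomial supported on $kr$ consecutive exponents is $x^a q(x)$ with $\deg q\le kr-1$, hence cannot be divisible by a degree-$kr$ polynomial with nonzero roots; so the map from the span of those $kr$ coefficients to remainders modulo $\prod_j(\gamma_j+\eta_j x)^r$ is injective, hence bijective, and one can solve uniquely for the internal coefficients whatever values the remaining height-$r$ coefficients (ray points, $R$-cone points, points outside the cone over $E$) take. This settles independence and non-interference simultaneously; together with the codimension-$k(h-1)$ count on the top slice, it turns your outline into a complete proof.
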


\begin{Prop}\label{Prop:mutation-multiple-points}
  Let $P$ be a Fano polygon, and let $f$ be a generic Laurent polynomial with $Newt(f)=P$. Let $P'$ be a mutation of $P$ that contracts a $T$-cone of height $h$ on an edge $E$, and suppose $f$ is mutable over this cone%along this mutation
. Then $f$ has an ordinary multiple point of multiplicity $h$ on $supp(E)$. In particular, a generic maximally mutable Laurent polynomial has a multiple point of multiplicity $h_i$ for each $T$-cone of $P$ of height $h_i$.
\end{Prop}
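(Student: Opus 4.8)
The plan is to compute the local equation of the curve $\overline{\{f=0\}}\sse Y_P$ in explicit toric coordinates near the boundary divisor $D_E$ associated to $E$ (which I take to be the meaning of $supp(E)$), and to read off the multiple point directly from the slicewise factorisation imposed by mutability. After acting by a suitable element of $GL(N)$ I would arrange that the inward normal of $E$ is $u=(0,-1)$ and that the primitive vector spanning the mutation direction is $F=(1,0)$; then $E$ lies at lattice height $h$, the cluster factor is $\gamma+\eta x^F=\gamma+\eta X$ with $X=x^{(1,0)}$, and $\langle m,u\rangle=-b$ for a lattice point $m$ of height $b$. Since $u$ is primitive I can complete it to a basis and introduce a local coordinate $Y$ cutting out $D_E$ (so that $x^m$ vanishes to order $\langle m,u\rangle$ along $D_E$), with $X$ restricting to a coordinate on the one-dimensional orbit $O(u)$. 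Writing $f=\sum_b f_b(X)Y^{-b}$, where $f_b$ is the height-$b$ slice of $f$, the section $g:=Y^hf=\sum_b Y^{h-b}f_b(X)$ is regular near $O(u)$, is not divisible by $Y$ (its $Y^0$-term is the edge polynomial $f_h$), and cuts out $\overline{\{f=0\}}$ there; moreover $O(u)$ lies in the smooth locus of $Y_P$, so I may work in the honest local ring at a point of $D_E$.

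By Definition \ref{Defn:f-mutation}, mutability of $f$ over the contracted $T$-cone means $f_b(X)=(\gamma+\eta X)^b\,h_b(X)$ for $0<b\le h$, with each $h_b$ a Laurent polynomial. I would localise at the point $X_0=-\gamma/\eta\in O(u)\sse D_E$ (a nonzero, hence interior and smooth, point of $D_E$), using the local coordinate $\xi:=\gamma+\eta X$. In the coordinates $(\xi,Y)$ the term $Y^{h-b}f_b$ contributes, for $0<b\le h$, a lowest-order monomial $h_b(X_0)\,\xi^{b}Y^{h-b}$ of total degree $h$; the height-$0$ term contributes $f_0(X_0)\,Y^{h}$, again of degree $h$; and the terms of negative height $b<0$ contribute only monomials of $Y$-degree $h-b>h$. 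Hence the lowest-order form of $g$ at $(\xi,Y)=(0,0)$ is the binary form
\[
\Phi(\xi,Y)=f_0(X_0)\,Y^h+\sum_{b=1}^{h}h_b(X_0)\,\xi^{b}Y^{h-b},
\]
whose $\xi^h$-coefficient is $h_h(X_0)$, generically nonzero, so that $\Phi$ has degree exactly $h$. This already shows that $\overline{\{f=0\}}$ has a point of multiplicity exactly $h$ at $(X_0,0)\in D_E$, and it yields the ``in particular'' clause at once: a maximally mutable $f$ is by definition mutable over every $T$-cone, so applying the above to each $T$-cone of height $h_i$ produces a point of multiplicity $h_i$ on the corresponding edge divisor.

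To upgrade ``multiplicity $h$'' to \emph{ordinary} multiplicity I must show that, for generic $f$, the form $\Phi$ is a product of $h$ pairwise-distinct linear factors, equivalently that its discriminant (a polynomial in the coefficients $f_0(X_0),h_1(X_0),\dots,h_h(X_0)$) does not vanish. This is the one genuinely nontrivial point, and it is exactly where the genericity hypothesis is needed: for special representatives (e.g.\ standard, binomial-edge ones) $\Phi$ can acquire repeated roots, so no uniform argument is available. The plan here is to observe that the coefficients of $\Phi$ are unconstrained by mutability --- once the factors $(\gamma+\eta X)^b$ have been divided out, the cofactors $h_b$ and the height-$0$ slice $f_0$ remain free --- so that $(f_0(X_0),h_1(X_0),\dots,h_h(X_0))$ ranges over a Zariski-dense subset of $\CC^{h+1}$ as $f$ varies over mutable polynomials with $Newt(f)=P$. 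Since having distinct roots is a nonempty open condition on binary forms of degree $h$, realised by some admissible choice of coefficients, the discriminant is not identically zero on this family, and the generic mutable $f$ gives an ordinary multiple point. The main obstacle is thus not the multiplicity computation, which is forced by the slice structure, but verifying this non-degeneracy statement --- and in particular checking that the extra constraints present in the maximally mutable case do not secretly pin the coefficients of $\Phi$ onto the discriminant locus.
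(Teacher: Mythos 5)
Your proposal is correct and takes essentially the same route as the paper: the paper's proof likewise invokes the slicewise factorization $f_r=(\gamma+\eta x)^r y^r h_r(x)$ from Definition \ref{Defn:f-mutation} and reads off the $h$-uple point at the point of the edge divisor corresponding to $x=-\gamma/\eta$ in a local toric chart, exactly as in your tangent-cone computation. Your explicit discriminant/genericity argument for \emph{ordinariness} merely fills in what the paper compresses into ``we can easily see,'' and the obstacle you flag at the end is not actually needed: the ``in particular'' clause for maximally mutable polynomials claims only the multiplicity (not ordinariness), which your lowest-order-form computation already delivers without any nondegeneracy of the coefficients.
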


\begin{proof}
 Recall from Definition \ref{Defn:f-mutation} the conditions for mutability: choose local coordinates $x,y$ so the edge $E$ is contained in the hyperplane $y=h$, and let $f_r$ be the polynomial made up of terms of $f$ corresponding to points at height $r$ (using the same height function). Then in these coordinates, we can write
\[
f_r=(\gamma+ \eta x)^ry^rh_r,
\]
where $h_r=h_r(x)$ is some Laurent polynomial in $x$. Examining in local coordinates, e.g. in the toric chart corresponding to one of the vertices of $E$, where $f$ becomes an honest polynomial, we can easily see that $f$ has an ordinary $h$-uple point here (at the point corresponding to $x=-\gamma/\eta$).
\end{proof}

\begin{Thm}\label{Thm:mutable-genus}
Let $f$ be a generic Laurent polynomial with $Newt(f)=P$. The general fiber $X_t\sse \widetilde{Y_P}$, which is the desingularization of the curve $f=0$ in $Y_P$, has genus equal to the number of internal lattice points of the $f$-rigid cones of $P$, counting the origin. In particular, if $f$ is a generic maximally mutable Laurent polynomial, the genus is the number of internal lattice points of the $R$-cones of $P$, counting the origin, and we call this number the \emph{mutable genus} of $Y_P$ and denote it by $g_{mut}(Y_P)$; it is mutation-invariant.
\end{Thm}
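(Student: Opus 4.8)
The plan is to compute the geometric genus of the general fibre by comparing our constrained $f$ against a fully generic member of the same linear system and measuring exactly how much genus the mutability constraints destroy. For a nondegenerate Laurent polynomial with Newton polygon $P$ the classical toric genus formula (Baker--Khovanskii) says that a general member of $|D_P|$ is smooth with genus $|int(P)\cap N|$, and this is then the arithmetic genus of every curve in the system. Our fibres $\{f=c\}$ are special members: they carry the boundary singularities forced by mutability, so their normalization $X_t$ has geometric genus
\[
g(X_t)=|int(P)\cap N|-\sum_{p}\delta_p,
\]
the sum running over those singular points. The argument therefore splits into a combinatorial computation of $|int(P)\cap N|$ and a geometric computation of the total $\delta$-defect, and the whole theorem amounts to the claim that this defect equals the number of interior lattice points lying in the $f$-mutable cones.

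I would dispatch the combinatorics first, where the Fano hypothesis does the essential work. Subdivide $P$ into the cones over its edges meeting at the origin. Because every vertex $p_i$ is primitive, the segment from $0$ to $p_i$ contains no further lattice point, so apart from the origin no interior lattice point of $P$ lies on a ray of this subdivision; hence every interior point other than $0$ lies strictly inside a unique cone and
\[
|int(P)\cap N|=1+\sum_{\text{cones }C}|int(C)\cap N|.
\]
For the cone $C$ over an edge of height $h$ and width $w$ (with primitive endpoints) Pick's theorem gives $|int(C)\cap N|=\tfrac12 w(h-1)$: the triangle has area $\tfrac12 wh$, its two radial edges carry no interior lattice points, and the outer edge carries $w-1$. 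Writing $w=kh+e$ as in Definition \ref{Defn:P-mutation} splits this as $k\binom{h}{2}+\tfrac12 e(h-1)$, that is, a contribution $\binom{h}{2}$ from each $T$-cone plus the interior count of the $R$-cone; the independence of this splitting from the chosen subdivision is exactly \cite[2.3]{Al-Mo-singularity-content}. Summing over edges gives $|int(P)\cap N|=1+(\text{interior points in }f\text{-mutable cones})+(\text{interior points in }f\text{-rigid cones})$, so it remains only to identify the $\delta$-defect with the interior count of the mutable cones.

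This last identification is the geometric heart of the proof and the step I expect to be hardest. By Proposition \ref{Prop:mutation-multiple-points}, mutability over a cone of height $h$ forces an ordinary multiple point of multiplicity $h$ on the boundary divisor $D_E$, and such a point contributes its $\delta$-invariant $\binom{h}{2}$ to the difference between arithmetic and geometric genus; I would verify this locally in the toric chart at a vertex of $E$ used in the proof of that proposition, where $f$ becomes an honest polynomial with an ordinary $h$-uple point at $x=-\gamma/\eta$. The real obstacle is the bookkeeping when an edge has width $w=kh>h$: it then supports $k$ mutable cones, but the forced factorization of the edge term $f_h$ can concentrate the whole degeneracy at the single point $x=-\gamma/\eta$ of $D_E$. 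One must resolve the ambient $T$-singularity of $Y_P$ along the cone over $E$ and check that doing so separates this degeneracy into exactly $k$ ordinary $h$-uple points --- equivalently, that the total $\delta$-invariant accumulated over the resolution is precisely $k\binom{h}{2}$, matching the lattice count $\tfrac12 w(h-1)$ of the previous paragraph term by term. Genericity of $f$ within its mutation class ensures no further singularities occur, so the defect is accounted for exactly.

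Combining the two halves yields $g(X_t)=|int(P)\cap N|-(\text{interior points in }f\text{-mutable cones})=1+(\text{interior points in }f\text{-rigid cones})$, which is the assertion; for a generic maximally mutable $f$ the mutable cones are the $T$-cones and the rigid ones the $R$-cones, giving the mutable genus. Finally, mutation-invariance of $g_{mut}(Y_P)$ is immediate from this formula: the singularity content, hence the multiset of $R$-cones together with their interior lattice-point counts, is preserved under mutation (Definition \ref{Defn:singularity-content} and the following remark), and the origin is always interior, so the value $1+\sum_{R\text{-cones}}|int(R)\cap N|$ does not change.
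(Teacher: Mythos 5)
Your proposal is correct and follows essentially the same route as the paper's own proof: compare $f$ against a generic section of $|D_P|$ whose genus is $|int(P)\cap N|$ (sectional genus), subtract a defect of $\tfrac12 h(h-1)$ for each ordinary $h$-uple point forced by Proposition \ref{Prop:mutation-multiple-points}, identify that defect with the interior lattice-point count of a height-$h$ $T$-cone via Pick's formula, and get mutation-invariance from invariance of the singularity content. The only place you overcomplicate is your ``real obstacle'': for \emph{generic} $f$ the $k$ mutable cones on an edge of width $kh$ carry distinct factors $(\gamma_i+\eta_i x)$, so the $k$ $h$-uple points already sit at distinct points in the smooth locus of $D_E$ and no separation by resolving the ambient toric singularity is needed (nor would it help, since that resolution is supported at the torus-fixed points, not at interior points of $D_E$); the genuinely coincident case occurs only for special coefficients and is treated in the paper's Remark after the theorem by a tangency computation showing the total defect is still $k\cdot\tfrac12 h(h-1)$.
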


\begin{proof}
Recall that the genus $g(D_P)$ of the desingularization of $D_P$, called the \emph{sectional genus} of $Y_P$, is equal to the number of internal lattice points of $P$ \cite[10.5.8]{Cox-Little-Schenk}. This is the genus of a generic curve in the complete linear system of curves linearly equivalent to $D_P$. The curves defined by Laurent polynomials mutable over a given collection of $T$-cones form a base point-free linear subspace of this linear system in the obvious way, and to find the genus of such a curve, we need to examine how a general Laurent polynomial $f$ with the appropriate mutability differs from a generic section of $D_P$.

It follows from Proposition \ref{Prop:mutation-multiple-points} that a Laurent polynomial has an $h$-uple point for every $T$-cone of height $h$ over which it is mutable;
we have imposed no other conditions, so there are no other special points that affect the genus.

The effect of an ordinary $h$-uple point on the genus of a curve is well known (see e.g. \cite[pp.500-508]{Griffiths-Harris}): the genus drops by $\frac12 h(h-1)$ for every such point. Thus, the genus of the curve defined by $f$ is
$g(D_P)-\sum \frac12 h_i(h_i-1)$, where the sum runs over the $T$-cones of $Newt(f)$ over which $f$ is mutable and the $i$'th cone has lattice height $h_i$.

Now observe that $\frac12 h(h-1)$ is exactly the number of internal lattice points in a $T$-cone of height $h$ (this follows directly from Pick's formula \cite[Ex. 9.4.4]{Cox-Little-Schenk}), so the genus of $f$
is equal to $g(D_P)-\sum \frac12 h_i(h_i-1) = |int(P)\cap N| - |int(P)\cap N\cap \text{$f$-mutable cones}|$, that is, the number of internal lattice points in $P$ that are in $f$-rigid cones, counting the origin. In particular if $f$ is a generic MMLP, it is mutable over all the $T$-cones, so the genus is the number of lattice points in $P$ that are in $R$-cones, counting the origin.

To see that this genus $g$ is mutation-invariant, it is enough to recall that the singularity content of $P$, in particular the set of $R$-cones, is invariant under mutation (\ref{Defn:singularity-content}, also see \cite{Al-Mo-singularity-content}), which of course implies that the number of internal lattice points in the $R$-cones is invariant; in particular it is preserved by those mutations of $P$ over which $f$ is mutable.
\end{proof}

\begin{Rem}
  We remark that the genus is unchanged even if some the multiple points on the $T$-cones on the same edge over which the polynomial is mutable are allowed to coincide, i.e. if $f$ is mutable with the same factor on all the cones. This is because when we deform $k$ ordinary $h$-uple points to coincide, the result is not an ordinary $kh$-uple point, but an $h$-uple point where the branches meet with an order $k$ tangency (the case $k=2,h=2$ is the familiar tacnode); an order $k$ tangency will drop the genus by $k$ for every branch, so the total defect is still $k\cdot \frac12 h(h-1)$ \cite[pp.500-508]{Griffiths-Harris}. Note also that the standard MMLP's may have genus lower than $g_{mut}(Y_P)$, as fixing a coefficient 0 at the origin may cause problems. An example is the pictured polygon (the internal point is the origin):
\[  
\begin{tikzpicture}[scale=1]
\draw[step=1,gray,dashed,very thin] (-1.4,-1.4) grid (1.4,1.4);
\node (m11) at (-1,1) {$\bullet$};
\node (01) at (0,1) {$\bullet$};
\node (11) at (1,1) {$\bullet$};
\node (00) at (0,0) {$\bullet$};
\node (0m1) at (0,-1) {$\bullet$};
\draw[-] (m11) -- (11) -- (0m1) -- (m11);
\end{tikzpicture}
\]
Fixing binomial edge coefficients we get a polynomial $f=\frac{y^2}{x}+2y^2+xy^2+\frac1y + a$; the curve $f=0$ has genus 1 unless $a=\pm 4$ or $a=0$, in which case it has genus 0. 
\end{Rem}

Recall that we can write $L_f=\sum_rp_r(t)\nabla^r$. The \emph{order} of $L_f$ is the maximal $r$ occurring in the sum, and the \emph{degree} of $L_f$ is the maximal degree (in $t$) of the $p_r$'s. The degree is hard to say anything about (but see Section \ref{sec:ramification}), however the order is now available to us:

\begin{Cor}\label{Cor:order-of-L_f}
Let $f$ be a Laurent polynomial with $Newt(f)=P$. Then the order of the Picard-Fuchs operator $L_f$ is 2 times the genus of $X_t$; in particular if $f$ is a generic maximally mutable Laurent polynomial, the order of $L_f$ is $2g_{mut}(Y_P)$.
\end{Cor}

\begin{proof}
It follows from the Cauchy-Kovalevski theorem (\cite[9.4.5]{Horm}) that the order of $L_f$ is equal to the rank of its solution space, and it is a well-known fact that $H_1(X,\CC)\simeq \CC^{2g}$ if $X$ is a compact Riemann surface of genus $g$.
\end{proof}

%%%%%%%%%%%%%%%%%%%%%%%%%%%%%%%%%%%%%%%%%%%
%
%  SECTION: MONODROMY
%
%%%%%%%%%%%%%%%%%%%%%%%%%%%%%%%%%%%%%%%%%%%

\section{Monodromy at $t=0$}\label{sec:monodromy}

To compute the monodromy of $H_1(X_t,\CC)$, we need to find a suitable basis of cycles, and a description of the monodromy automorphism. We will do this by explicitly constructing a model for $X_t$ by means of local calculations, explicitly carrying out the resolution $\widetilde{Y_P}\to Y_P$.

Let us recap what we know so far: The general fiber $X_t\sse \widetilde{Y_P}$ is a genus $g_{mut}$ curve, which degenerates as $t\to 0$ to the support of the divisor $D$, the pullback of $D_P$ to $\widetilde{Y_P}$. This divisor is in any case a collection of $\PP^1$'s, topologically a necklace of spheres, with some chains of spheres attached (each sphere corresponds to an edge of $P$ or an exceptional curve of the resolution $\widetilde{Y_P}\to Y_P$). Recall from Theorem \ref{Thm:mutable-genus} that $g_{mut}$ is equal to the number of internal lattice points of $P$ that are not internal to an $f$-mutable cone, which always includes the origin as $P$ by assumption is Fano. A necklace of spheres is a degeneration of a topological surface of genus at least one, which would account for the contribution to the genus from the lattice point at the origin. By \ref{Thm:mutable-genus} the rest of the genus comes from the internal points of the $R$-cones of $P$, so there must be some singularities on the $\PP^1$'s corresponding to the edges that resolve to give a higher-genus surface.

We may thus reduce to a series of local considerations, which we will refer to as the \emph{contributions} from the vertices and edges, and $f$-rigid cones respectively. The contribution from the vertices is this: intersection points between the components are degenerations of the form $\{x^my^n=t\}\to\{x^my^n=0\}$, and we must describe which of these occur and what the monodromy does to them (see Figure \ref{fig:vanishing-cycle}). The contributions from the $R$-cones is this: on the components of $D_P$ corresponding to edges with $f$-rigid cones, we must identify what singular points occur and resolve them to get a positive-genus curve $\widetilde{C}\to \PP^1$; then find an appropriate automorphism of $\widetilde{C}$ that fixes the inverse images of all the singular points and intersection points with the adjoining components of $D_P$ (see Figure \ref{fig:genus}).

\begin{figure}
\centering
  \includegraphics[width=0.5\textwidth]{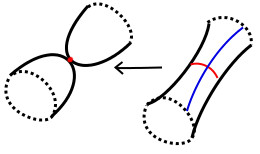}
  \caption{Local picture of the degeneration over an intersection between components of $D_P$; the vanishing cycle is indicated in red, and the relative cycle in blue.}\label{fig:vanishing-cycle}
  
\end{figure}

\begin{figure}
\centering
  \includegraphics[width=0.7\textwidth]{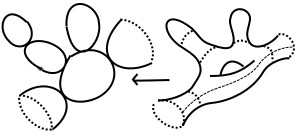}
  \caption{The component of $D$ corresponding to an $f$-rigid cone, showing the exceptional curves of some resolved singular points; this is a degeneration of a higher-genus surface, vanishing and relative cycles indicated. Notice how the monodromy automorphism of $X_t$ must fix these vanishing cycles to degenerate correctly to the special fiber.}\label{fig:genus}
  
\end{figure}

To compute the whole monodromy action on $X_t$, we will then cut the curve into pieces and consider each piece by itself, and then assemble the results afterwards. Some of the basis cycles of $H_1(X_t,\CC)$ will exist entirely within these pieces (that is, they are homologous to cycles contained in the local piece), these will be cycles that degenerate to a point in the special fiber, and are as such called \emph{vanishing cycles}. The remaining cycles will in the local pictures enter and exit the local piece through the cuts, these will be called \emph{relative cycles} in the local pictures.

We fix some notation: There is in fact only a single such global cycle that locally becomes a relative cycle, from here on we will call this cycle (and its local images) $\alpha$. The vanishing cycles over the intersections between components of $D_P$ are all homologous, and we will call this cycle $\beta$.

\begin{figure}
\centering
  \includegraphics[width=0.7\textwidth]{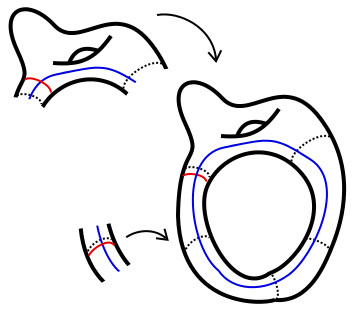}
  \caption{The cycle $\alpha$ marked in blue and the vanishing cycle $\beta$ marked in red, and their images in the local pieces.}\label{fig:global-cycles}
\end{figure}

Observe that the local monodromy action on the relative cycles need not be integral, as long as these globally add up to something integral (indeed, exploiting this fact will be crucial in some of the local calculations). 

\subsection{The singularities of $Y_P$, and intersections between the components}\label{subsec:iscts}

After we have resolved the singularities of $Y_P$, we may look at the monodromy action over the intersections between the components of $D$. Locally at the intersection between two components of $D$, of multiplicities $m$ and $n$ respectively, in the local coordinates given by the toric chart corresponding to the vertex of intersection, we can write $supp(D)$ as $\{x^my^n=0\}$. In these local coordinates, the global sections $1$ and $f$ become $x^my^n$ and $1+(\text{higher-order terms})$ respectively, and we can write $1-tf$ as $x^my^n-t(1+(\text{higher-order terms}))$, locally analytically equivalent to $x^my^n-t$. The degeneration when $t\to 0$ is now equivalent to $\{x^my^n=t\}\to\{x^my^n=0\}$. The monodromy action is then locally the monodromy of the curve $x^my^n=t$ as $t$ goes around zero. 

\begin{Lemma}\label{Lemma:basic-monodromy}
 Let $\beta$ be the vanishing cycle of $x^my^n=t$ when $t\to 0$ (with positive orientation), and let $\alpha$ be the relative cycle. The monodromy action on $\alpha,\beta$ in $x^my^n=t$ as $t$ goes around $t=0$ in the positive direction is given by $\beta\mapsto \beta,\alpha\mapsto \alpha-\frac{1}{mn}\beta$.
\end{Lemma}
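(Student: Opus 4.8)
The plan is to avoid tracking the Gauss--Manin connection directly and instead compute the monodromy topologically, by reducing the fiber $C_t=\{x^my^n=t\}$ to the \emph{ordinary node} $\{uv=t\}$ through the branched covering $p\colon(x,y)\mapsto(u,v)=(x^m,y^n)$ and invoking the classical Picard--Lefschetz formula downstairs. First I would set up the local model: for $t\neq 0$ the curve avoids the axes, so it lies in $(\CC^*)^2$, and (treating the coprime case $\gcd(m,n)=1$ first, so that $C_t$ is connected) the parametrization $(x,y)=(w^n,\,t^{1/n}w^{-m})$, $w\in\CC^*$, identifies $C_t$ with a cylinder. The vanishing cycle $\beta$ is the core circle $w=e^{2\pi i s}$, $s\in[0,1]$, which contracts onto the node as $t\to 0$, and the relative cycle $\alpha$ is the radial arc $w=\rho$, $\rho\in[\rho_0,\rho_1]$, joining the two ``cuts'' $|w|=\rho_i$ along which the local piece glues to its neighbours.

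Next I would exploit the covering. Since $u=x^m=w^{mn}$, the map $p$ carries $C_t$ onto $\{uv=t\}$ and restricts to the degree-$mn$ covering $w\mapsto w^{mn}$ of cylinders; hence $p_*\beta=mn\,\bar\beta$, where $\bar\beta$ is the core of the node, while $p$ maps the arc $\alpha$ homeomorphically onto the transverse arc $\bar\alpha$, so $p_*\alpha=\bar\alpha$. Because $p$ commutes with the two projections to the $t$-line, the geometric monodromy $h$ of $C_t$ is the lift (starting from the identity) of the monodromy $\bar h$ of $\{uv=t\}$, for which Picard--Lefschetz gives $\bar h_*\bar\beta=\bar\beta$ and $\bar h_*\bar\alpha=\bar\alpha-\bar\beta$.

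Finally I would push forward and divide by the covering degree. From $p\circ h=\bar h\circ p$ one gets $p_*(h_*\alpha)=\bar\alpha-\bar\beta$, hence $p_*(h_*\alpha-\alpha)=-\bar\beta=-\tfrac1{mn}p_*\beta$; as $p_*$ is injective on $H_1(C_t;\QQ)$, this forces $h_*\alpha=\alpha-\tfrac1{mn}\beta$, while $h_*\beta=\beta$ follows since $h$ preserves the oriented core. Thus the non-integral coefficient $\tfrac1{mn}$ is precisely the reciprocal of the degree of $p$.

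The step I expect to be the main obstacle is the rigorous interpretation of $h_*\alpha-\alpha$. A priori $\alpha$ is only a \emph{relative} cycle, and the monodromy rotates its endpoints along the two boundary circles by fractional amounts, so $h_*\alpha-\alpha$ is not literally a closed cycle; one must justify reading it as the rational class $-\tfrac1{mn}\beta$. This is exactly the fractional bookkeeping the paper flags (``the local monodromy action on the relative cycles need not be integral, as long as these globally add up to something integral''): the leftover boundary terms are absorbed by the adjacent pieces once $\alpha$ is completed to the global closed cycle. Making this precise — by working with the global $\alpha$ or with a fixed choice of cuts — is the delicate point. The coprimality hypothesis also needs attention, since for $\gcd(m,n)=d>1$ the fiber splits into $d$ cylinders which the monodromy permutes, so the statement must be read on one component (equivalently after suitable bookkeeping of the permutation), and I would record this case separately.
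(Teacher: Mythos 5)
Your argument is correct, and it reaches the formula by a genuinely different route than the paper. The paper computes directly on the fiber realized as the Riemann surface of $y=\sqrt[n]{t/x^m}$, an $n$-sheeted cover of the punctured $x$-plane carrying an $m$-fold rotational symmetry: the monodromy is identified with rotation by $2\pi/m$, explicit models of $\alpha$ and $\beta$ are written down, and the coefficient appears because that rotation displaces a point by $\tfrac1{mn}$ of the length of $\beta$. You instead pull the classical Picard--Lefschetz formula for the node $\{uv=t\}$ back through $p(x,y)=(x^m,y^n)$, which is an honest (unbranched) degree-$mn$ covering on every fiber over $t\neq 0$ because those fibers miss the branch locus $\{xy=0\}$; consequently the parallel transport downstairs lifts fiberwise, $\bar h$ lifts to $h$ (uniquely once normalized to start at the identity), and $\tfrac1{mn}$ is explained as the reciprocal of the covering degree. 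Your route replaces the paper's hands-on surface-rotation computation by a short formal argument ($p_*\beta=mn\,\bar\beta$, $p_*\alpha=\bar\alpha$, equivariance, divide); what it gives up is the explicit cycle model, which the paper reuses in its subsequent local analysis of the $f$-rigid cones.

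The step you flag is indeed the one that needs care, and you should make the convention explicit rather than leave it as an expectation. Since the lift $h$ of the Dehn twist is a \emph{fractional} twist, it moves one endpoint of $\alpha$ along its cut by $\tfrac1{mn}$ of a turn, so $h_*\alpha-\alpha$ is not a cycle and ``injectivity of $p_*$ on $H_1(C_t;\QQ)$'' does not literally apply; worse, the naive repair --- closing $h_*\alpha-\alpha$ up by the short arc along the cut --- yields the class $0$, not $-\tfrac1{mn}\beta$. The division step is meaningful only under the convention that net angular displacement along a cut is recorded as a real multiple of $\beta$ (equivalently, that the boundary motion is left open, to cancel against neighbouring pieces when the global cycle is assembled), together with the observation that $p_*$ multiplies such fractional windings by $mn$. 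This is exactly the paper's own convention, so your proof is no less rigorous than the paper's, but it must be said. Finally, your restriction to $\gcd(m,n)=1$ costs nothing in context: the lemma is only applied at intersections of adjacent components of the resolved divisor $D$, whose multiplicities are $-\langle v_1,p\rangle$ and $-\langle v_2,p\rangle$ for a primitive vertex $p$ of $P$ and a lattice basis $\{v_1,v_2\}$ coming from the smooth resolved fan, hence automatically coprime; in the non-coprime case the fiber really is $d=\gcd(m,n)$ cylinders permuted cyclically by the monodromy, a case the paper's proof also silently excludes.
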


\begin{proof}
  Consider the Riemann surface of $y=\sqrt[n]{\frac{t}{x^m}}$, for fixed $t$. This is an $n$-sheeted covering of the punctured complex plane with a singularity at $x=0$, where as you trace along the surface around the singularity, $y$ will alternate between approaching $+\infty$ and $-\infty$ as $x$ approaches zero, alternating a total of $m$ times (see Figure \ref{fig:32-monodromy} for a picture of what this looks like). Notice the $m$-fold rotational symmetry of the surface.

Write $t=e^{i\theta}$ and $x=e^{i\tau}$ (we may ignore the magnitude as only the argument is relevant to the monodromy action); we may now express the surface as
\[
y=(tx^{-m})^{\frac1n} = (e^{i(\theta-m\tau)})^{\frac1n}.
\]
In other words, when $t$ moves around the origin, the resulting surface satisfies an equation $y=(x_\theta^{-m})^{\frac1n}$, where $x_\theta=e^{i\tau(\theta)}$, and the argument satisfies $-m\tau(\theta)=\theta-m\tau$. From this, $\tau(\theta)=-\theta/m+\tau$, we see that the surface will rotate in the same direction as $t$, with $\frac1m$'th the speed. Thus, when $t$ has completed a full revolution, the surface will have rotated by an angle of $\frac{2\pi}m$, or one step along the $m$-fold rotational symmetry. 

To find the effect of this on the cycles $\alpha$ and $\beta$, we give an explicit model for each. The vanishing cycle $\beta$ is homologous to the curve $\{(e^{i\theta},e^{-\frac{i\theta}n})|0\le\theta\le 2n\pi\}$ that winds around the singularity $n$ times, following the sheets until it meets itself. This curve is preserved under the rotational symmetry of the surface, so the monodromy action on $\beta$ is the identity. The relative cycle $\alpha$ can be modelled by a curve going along the topmost sheet of the surface from $(\eps,\eps^{-\frac{m}{n}})$ to $(K,K^{-\frac{m}{n}})$, where $\eps\ll 1$ and $K\gg 1$ are real numbers (note the orientation). The monodromy action can be modelled by pinning the initial point $(\eps,\eps^{-\frac{m}{n}})$ in place (i.e. letting it rotate along with the surface) while holding the other fixed over $x=K$. After the monodromy action, the inital point has been moved to $(\eps\cdot e^{2\pi i/m},\eps^{-\frac{m}{n}}e^{2\pi i/m})$, while the final point, fixed to lie over $x=K$, will be on the sheet immediately below the topmost one. The resulting curve is homologous to $\alpha-\frac{1}{mn}\beta$, as the $m$-fold rotational symmetry moves a point $\frac1{mn}$'th of the length of $\beta$.
\end{proof}

\begin{figure}
\centering  
\includegraphics[width=0.5\textwidth]{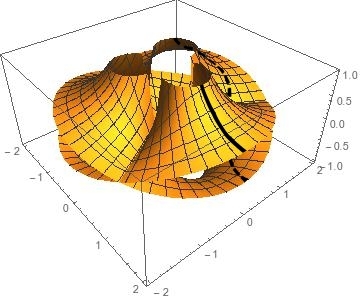}
  \caption{The Riemann surface of $y=Re(\sqrt{\frac{1}{x^3}})$; the solid curve is the relative cycle $\alpha$, and the vanishing cycle $\beta$ can be identified with the outer boundary of the displayed surface. The dashed curve is the cycle $\alpha-\frac16\beta$.}\label{fig:32-monodromy}
\end{figure}

We now find the pullback of $D_P$ when resolving the singularities of $Y_P$, this will give us all the points on $X_0$ that locally are of the form $x^my^n=0$, and now \ref{Lemma:basic-monodromy} tells us what the local monodromy action is. Notice that we can combine the local actions without a problem, as the vanishing cycles $\beta$ appearing in all of them are homologous, so the local actions commute.

Recall that $D_P=\sum h_iD_i$, where $h_i$ are the lattice heights of the edges $E_i$ of $P$ corresponding to the divisors $D_i$. The resolved divisor $D$ can be written $D=D_P+\sum m_jF_j$, where $F_j$ are some exceptional curves and $m_j$ are their multiplicities. An interesting fact is that the numbers $m_j$ are such that it makes sense to think of the $F_j$ as corresponding to ``edges'' of $P$ of width zero and height $m_j$; we will however not need this. 

Suppose now $v$ is a vertex of $P$, corresponding to a cone in the normal fan where $Y_P$ has a singularity of type $\frac1r(1,a)$, and that $v$ is joining edges $E$ and $E'$, of heights $h$ and $h'$. The singularity is resolved according to \cite[Chapter 10]{Cox-Little-Schenk}; recall in particular the notion of Hirzebruch-Jung continued fractions, denoted as follows:
\[
[b_1,b_2,\ldots,b_k]=b_1-\frac1{b_2-\frac{1}{\ldots-\frac1{b_k}}}.
\]
We introduce some notation: suppose the Hirzebruch-Jung continued fraction expansion of $r/a$ is $[b_1,\ldots,b_k]$; let $s_1=t_k=1$, and define positive integers $s_i,t_i$ by
\begin{align*}
  s_i/s_{i-1}:=[b_{i-1},\ldots,b_1],&\quad  2\le i\le k\\
  t_i/t_{i+1}:=[b_{i+1},\ldots,b_k],&\quad  1\le i\le k-1.
\end{align*}
Note that we may extend this to letting $s_0=t_{k+1}=0$ and $s_{k+1}=t_0=r$.

When resolving the singularity at $v$, we get $k$ exceptional curves $F_1,\ldots F_k$, with self-intersections $F_i^2=-b_i$. Let $m_i$ denote the multiplicity of $E_i$ in $D$; these multiplicities are determined by the criterion that $E_i.D=0$.

\begin{Lemma}\label{Lemma:blowup-numbers}
  \begin{enumerate}
  \item $s_{i+1}+s_{i-1}=b_is_i$ and $t_{i+1}+t_{i-1}=b_it_i$.
\item $m_i = \frac1r (t_im_0+s_i m_{k+1})$.
\item $m_0 = s_{i+1}m_i-s_im_{i+1}$.
  \end{enumerate}
\end{Lemma}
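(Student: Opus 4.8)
These are the standard recursion and reciprocity relations governing Hirzebruch–Jung resolutions of cyclic quotient singularities, so the plan is to derive all three parts from the defining recursions for the continued fractions and the linear-algebra constraint $E_i.D=0$ that pins down the multiplicities. First I would establish part (1), which is purely a statement about the sequences $s_i,t_i$ and does not involve the multiplicities at all. By definition $s_{i+1}/s_i=[b_i,b_{i-1},\ldots,b_1]$, and expanding the outermost term of the continued fraction gives $[b_i,b_{i-1},\ldots,b_1]=b_i-1/[b_{i-1},\ldots,b_1]=b_i-s_{i-1}/s_i$. Clearing denominators yields $s_{i+1}=b_is_i-s_{i-1}$, which is exactly the asserted three-term recursion, and the identical computation with the reversed fraction $[b_{i+1},\ldots,b_k]$ gives the relation for $t_i$. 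The boundary cases ($s_1=1$, $s_0=0$, and $t_k=1$, $t_{k+1}=0$) are handled by the extension $s_0=t_{k+1}=0$, $s_{k+1}=t_0=r$ stipulated just before the statement; I would check these separately as degenerate instances of the recursion.

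For part (2), I would use that the multiplicities $m_i$ of the chain of exceptional curves $F_1,\ldots,F_k$ (together with the two boundary multiplicities $m_0$ and $m_{k+1}$ coming from the adjacent edges $E$ and $E'$) are determined by the conditions $F_i.D=0$ for each interior curve $F_i$. Since $F_i^2=-b_i$ and consecutive curves in the chain meet transversally with intersection number $1$, the condition $F_i.D=0$ reads $m_{i-1}-b_im_i+m_{i+1}=0$, i.e. the $m_i$ satisfy the \emph{same} three-term recursion $m_{i+1}+m_{i-1}=b_im_i$ as the $s_i$ and $t_i$ from part (1). A second-order linear recursion has a two-dimensional solution space, and $\{s_i\}$, $\{t_i\}$ are two solutions; the claim $m_i=\frac1r(t_im_0+s_im_{k+1})$ is just the assertion that the particular solution $\{m_i\}$ is the linear combination of $\{t_i\}$ and $\{s_i\}$ with the correct boundary values. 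I would verify this by checking it at the two endpoints: at $i=0$ one needs $t_0=r$ and $s_0=0$, and at $i=k+1$ one needs $t_{k+1}=0$ and $s_{k+1}=r$, both of which are exactly the boundary extensions recorded above. Since both sides satisfy the same recursion and agree at two points, they agree everywhere.

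Finally, part (3) is a Wronskian-type (Cassini) identity: for any two solutions of a three-term recursion $x_{i+1}=b_ix_i-x_{i-1}$, the quantity $x_{i+1}y_i-x_iy_{i+1}$ is independent of $i$. Applying this to the pair $\{s_i\},\{m_i\}$, the combination $s_{i+1}m_i-s_im_{i+1}$ is constant in $i$; evaluating at $i=0$ using $s_1=1$, $s_0=0$ gives $s_1m_0-s_0m_1=m_0$, which is the asserted value. I would spell out the one-line induction showing $s_{i+1}m_i-s_im_{i+1}=s_im_{i-1}-s_{i-1}m_i$ via the shared recursion, anchoring it at $i=0$.

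The main obstacle is not any single hard computation but rather bookkeeping: getting the index conventions and boundary extensions ($s_0,t_{k+1}=0$ and $s_{k+1},t_0=r$) to line up consistently across all three parts, and correctly justifying that $F_i.D=0$ produces precisely the recursion $m_{i-1}-b_im_i+m_{i+1}=0$ — this last point requires knowing the intersection pattern of the resolved chain, which I would cite from \cite[Chapter 10]{Cox-Little-Schenk}. Once the recursion for $\{m_i\}$ is in hand, parts (2) and (3) are immediate consequences of the elementary theory of linear recursions.
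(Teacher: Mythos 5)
Your parts (1) and (3) coincide with the paper's own argument: part (1) is the same expansion of the outermost term of the continued fraction, and your Wronskian/Cassini constancy for part (3) is exactly the paper's induction step $s_{i+1}m_i-s_im_{i+1}=s_{i+2}m_{i+1}-s_{i+1}m_{i+2}$, merely anchored at $i=0$ instead of at $i=1$. For part (2) you genuinely diverge: the paper derives the formula by successive elimination in the system $m_{i-1}-b_im_i+m_{i+1}=0$, using part (1) at each step to recognize the coefficients as $s_i$'s and $t_i$'s, whereas you observe that $\frac1r(t_im_0+s_im_{k+1})$ is itself a solution of the same three-term recursion and then appeal to uniqueness given the boundary values $m_0$, $m_{k+1}$. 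Your route is more conceptual and explains where the formula comes from ($\{s_i\}$ and $\{t_i\}$ are the fundamental solutions adapted to the two endpoints), at the cost of needing a uniqueness statement that the paper's elimination never requires.

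That uniqueness step is where you have a gap: ``both sides satisfy the same recursion and agree at two points, hence agree everywhere'' is false for a second-order recursion when the two points are not consecutive. For instance, with $b_i\equiv 1$ the solution $0,1,1,0$ of $x_{i+1}=x_i-x_{i-1}$ vanishes at both endpoints without being identically zero. Agreement at $i=0$ and $i=k+1$ pins down a solution only if the boundary value problem is nondegenerate, i.e. if no nonzero solution vanishes at both endpoints; this holds in the present setting but must be argued. The patch is short: the difference $d_i=m_i-\frac1r(t_im_0+s_im_{k+1})$ satisfies the recursion with $d_0=0$, hence $d_i=d_1s_i$ for all $i$ (now comparing at the \emph{consecutive} indices $i=0,1$, where both sides agree since $s_0=0$, $s_1=1$); then $0=d_{k+1}=d_1s_{k+1}=d_1r$ forces $d_1=0$, so $d\equiv 0$. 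Equivalently, nondegeneracy follows from the negative definiteness of the intersection matrix of the exceptional chain, which makes the linear system $F_i.D=0$ uniquely solvable. With this two-line addition your proof of (2) is complete.
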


\begin{proof}
  \emph{1.}  By definition, $s_{i+1}/s_i = [b_i,\ldots,b_1] = b_i-1/[b_{i-1},\ldots,b_1] = b_i-s_{i-1}/s_i$ and it follows that $s_{i+1}+s_{i-1}=b_is_i$, a similar rearrangement shows the other identity.

\emph{2.} Recall that the $m_i$ are defined by the system of equations $E_i.D=0$. As the only components of $D$ that are involved are $D_F$, $D_{F'}$ and the $E_i$'s, and the intersection numbers are 1 for adjacent components and 0 for non-adjacent components, we get equations
 $m_{i-1}-b_im_i+m_{i+1}=0$ (for $1\le i\le k$). Successive elimination, applying item 1 at each step, now yields the desired conclusion.

\emph{3.} We show this by induction. The base case is the equation $m_{i-1}-b_im_i+m_{i+1}=0$ for $i=1$, using that $s_1=1$ and $s_2=b_1$. The induction step is to show that $s_{i+1}m_i-s_im_{i+1}=s_{i+2}m_{i+1}-s_{i+1}m_{i+2}$; rearranging we have $s_{i+1}m_i+s_{i+1}m_{i+2}=s_{i+2}m_{i+1}+s_im_{i+1}$, and applying the identity $s_{i+2}+s_{i}=b_{i+1}s_{i+1}$ on the right-hand side and the equation $m_{i}+m_{i+2}=b_{i+1}m_{i+1}$ on the left-hand side we see that both sides equal $b_{i+1}s_{i+1}m_{i+1}$.
\end{proof}

\begin{Lemma}\label{Lemma:r/mn}
$\sum_{i=0}^k \frac{1}{m_im_{i+1}} = \frac{r}{m_0m_{k+1}}$.
\end{Lemma}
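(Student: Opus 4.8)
The plan is to exploit item 3 of Lemma \ref{Lemma:blowup-numbers}, which reads $m_0=s_{i+1}m_i-s_im_{i+1}$, as a telescoping decomposition of each summand. Dividing this identity through by $m_0m_im_{i+1}$ gives
\[
\frac{1}{m_im_{i+1}}=\frac{1}{m_0}\left(\frac{s_{i+1}}{m_{i+1}}-\frac{s_i}{m_i}\right),
\]
so the general term of the sum is exactly the discrete difference of the sequence $s_i/m_i$, scaled by $1/m_0$. This is the whole idea: once the summand is recognized as a telescoping difference, the rest is automatic.

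Summing from $i=0$ to $k$ then collapses to the boundary terms,
\[
\sum_{i=0}^{k}\frac{1}{m_im_{i+1}}=\frac{1}{m_0}\left(\frac{s_{k+1}}{m_{k+1}}-\frac{s_0}{m_0}\right),
\]
and I would finish by substituting the boundary conventions $s_0=0$ and $s_{k+1}=r$ fixed just before the lemma, which immediately yields $\tfrac{1}{m_0}\cdot\tfrac{r}{m_{k+1}}=\tfrac{r}{m_0m_{k+1}}$, as desired.

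The only point requiring care — and the closest thing to an obstacle here — is checking that item 3 is valid across the full range $0\le i\le k$ needed for the telescoping, in particular at the two endpoints where the boundary conventions enter. At $i=0$ it reduces to $s_1m_0-s_0m_1=m_0$, which holds since $s_0=0$ and $s_1=1$; at $i=k$ it reads $s_{k+1}m_k-s_km_{k+1}=m_0$, i.e. $rm_k-s_km_{k+1}=m_0$, which is precisely item 2 of Lemma \ref{Lemma:blowup-numbers} at $i=k$ (using $t_k=1$ and $s_{k+1}=r$). Thus item 3 holds at both ends, the intermediate indices being covered by the induction already carried out in its proof, and the telescoping argument goes through without modification.
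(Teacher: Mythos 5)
Your proof is correct and is essentially the paper's own argument: the paper proves by induction that the partial sums equal $\frac{s_{i+1}}{m_0m_{i+1}}$, which is precisely your telescoping identity $\frac{1}{m_im_{i+1}}=\frac{1}{m_0}\bigl(\frac{s_{i+1}}{m_{i+1}}-\frac{s_i}{m_i}\bigr)$ summed up, both resting on Lemma \ref{Lemma:blowup-numbers}(3) and the boundary values $s_0=0$, $s_1=1$, $s_{k+1}=r$. Your explicit verification of item (3) at the endpoints $i=0$ and $i=k$ is a welcome bit of extra care, but the substance of the argument is the same.
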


\begin{proof}
  Observe first that $\frac1{m_0m_1}+\frac1{m_1m_2} = \frac1{m_1} \frac{m_2+m_0}{m_0m_2}$, and by \ref{Lemma:blowup-numbers}(3) $m_2+m_0 = s_1m_2+m_0 = s_2m_1$, so we get $\frac1{m_0m_1}+\frac1{m_1m_2} = \frac{s_2}{m_0m_2}$. In similar fashion we see that  $\frac{s_i}{m_0m_i}+\frac1{m_im_{i+1}} = \frac1{m_i}\frac{s_im_{i+1}+m_0}{m_0m_{i+1}} =\frac{s_{i+1}}{m_0m_{i+1}}$, so by induction we have $\sum_{i=0}^k \frac{1}{m_im_{i+1}} = \frac{s_{k+1}}{m_0m_{k+1}} = \frac{r}{m_0m_{k+1}}$.
\end{proof}

\begin{Prop}\label{Prop:degree-contr}
  The contribution to the global monodromy of the cycles $\alpha,\beta\in H_1(X_t)$ from the vertices of $P$ is $\alpha\mapsto \alpha - (K_\Pi^2)\beta$ and $\beta\mapsto\beta$, where $\Pi$ is the toric variety defined by the spanning fan of $P$ and $K_\Pi$ is its canonical divisor.
\end{Prop}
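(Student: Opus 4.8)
The plan is to build the vertex contribution to the global monodromy out of the purely local data of Lemma \ref{Lemma:basic-monodromy}, and then to recognise the resulting combinatorial sum as the anticanonical self-intersection of $\Pi$. After resolving, the special fiber $X_0=\mathrm{supp}(D)$ is the necklace whose components are the edge divisors $D_{E_i}$ (of multiplicity $h_i$) together with the exceptional curves $F_j$ (of multiplicity $m_j$) introduced in resolving the torus-fixed points of $Y_P$; its intersection points are exactly the images of those fixed points, which biject with the vertices of $P$. Over each such point, where locally $\{x^my^n=0\}$, Lemma \ref{Lemma:basic-monodromy} gives $\beta\mapsto\beta$, $\alpha\mapsto\alpha-\tfrac{1}{mn}\beta$. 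Since all the $\beta$'s are homologous the local automorphisms commute, so I would compose them to obtain $\beta\mapsto\beta$ and $\alpha\mapsto\alpha-S\beta$, where $S=\sum\frac{1}{m_Am_B}$ is summed over adjacent pairs of components $A,B$ of the necklace.

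Next I would group this sum by vertices of $P$. At a vertex $v$ of type $\tfrac1r(1,a)$ joining edges of heights $h$ and $h'$, the string of components running from one edge, through the exceptional chain $F_1,\dots,F_k$, to the next edge carries multiplicities $m_0=h,\,m_1,\dots,m_k,\,m_{k+1}=h'$, and Lemma \ref{Lemma:r/mn} collapses its contribution to $\sum_{i=0}^{k}\frac{1}{m_im_{i+1}}=\frac{r}{hh'}$. Summing over all vertices yields the closed form $S=\sum_{v}\frac{r_v}{h_{E(v)}h_{E'(v)}}$, where $r_v$ is the index of the normal-fan singularity of $Y_P$ at $v$ and $E(v),E'(v)$ are the two edges meeting at $v$.

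Finally I would identify this with $K_\Pi^2$ via the polar dual polygon $P^\ast=\{m\in M\otimes\RR:\langle m,p\rangle\ge -1\text{ for all }p\in P\}$. Since $P$ is Fano, $\Pi$ is a toric del Pezzo and $-K_\Pi$ is an ample $\QQ$-divisor whose polytope is $P^\ast$, so the standard toric formula (\cite{Cox-Little-Schenk}) gives $K_\Pi^2=(-K_\Pi)^2=\mathrm{(normalised\ area\ of\ }P^\ast)$, i.e.\ twice the Euclidean area. The vertices of $P^\ast$ are the points $u_i/h_i$, and $0\in\mathrm{int}(P^\ast)$, so triangulating $P^\ast$ from the origin writes its normalised area as $\sum_i\bigl|\det\!\bigl(u_{i-1}/h_{i-1},\,u_i/h_i\bigr)\bigr|=\sum_i\frac{|\det(u_{i-1},u_i)|}{h_{i-1}h_i}$. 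As $|\det(u_{i-1},u_i)|$ is exactly the multiplicity $r_{p_i}$ of the normal-fan cone at $p_i$, this equals $\sum_v\frac{r_v}{h_{E(v)}h_{E'(v)}}=S$, which finishes the proof.

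The main obstacle is this last step: recognising that the purely local quantity $\sum_v r_v/(hh')$ is the global invariant $K_\Pi^2$. The cleanest route is the dual-polygon area computation above, but two points demand care. First, $\Pi$ (spanning fan, in $N$) and $Y_P$ (normal fan, in $M$) are genuinely different toric surfaces, so one must consistently track that the multiplicities $r_v$ come from the normal fan while the heights $h$ organise the dual polygon—and verify $|\det(u_{i-1},u_i)|=r_{p_i}$. Second, the area formula must be applied to a $\QQ$-Cartier ample divisor when $P$ is not reflexive, so $K_\Pi^2$ may be non-integral, matching the possibly fractional sum $\sum_v r_v/(hh')$.
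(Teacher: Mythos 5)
Your proposal is correct and takes essentially the same route as the paper's proof: both reduce the vertex contribution to $\frac{r}{hh'}$ per vertex by combining Lemma \ref{Lemma:basic-monodromy} with Lemma \ref{Lemma:r/mn}, and both identify $\sum_v \frac{r_v}{hh'}$ with $K_\Pi^2$ by computing the normalized volume of the dual polygon triangulated from the origin, the triangle at each vertex having area $\frac{|\det(u_{i-1},u_i)|}{h_{i-1}h_i}=\frac{r}{hh'}$. The one step you flag but leave as a verification, namely $|\det(u_{i-1},u_i)|=r_v$, is exactly what the paper checks using Definition \ref{Defn:type-of-cones}.
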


\begin{proof}
Combining \ref{Lemma:basic-monodromy}, \ref{Lemma:blowup-numbers} and \ref{Lemma:r/mn} tells us that the contribution from a vertex of $P$ is $\alpha\mapsto \alpha - \frac{r}{mn}\beta$, where $m,n$ are the lattice heights of the adjoining edges and the singularity of $Y_P$ in the corresponding chart is of type $\frac1r(1,a)$ (or if $Y_P$ is smooth here, take $r=1$).

  It is well-known that $K_\Pi^2$ is equal to the lattice volume of the dual polytope $P^\circ\sse M_\RR$ of $P$ (see \cite[13.4.1]{Cox-Little-Schenk}). To show the claim, it is enough to show that the volume of the cone $C_v$ in $P^\circ$ corresponding to the vertex $v$ of $P$ is equal to $\frac{r}{mn}$.
Let $u,w$ be primitive lattice generators of $C_v$.
By Definition \ref{Defn:type-of-cones}, $C_v$ is of type $\frac1r(1,a)$ when $\{\frac1r u+\frac{a}{r} w,w\}$ is a lattice basis for $M$. As $\{\frac1r u+\frac{a}{r} w,w\}$ is a lattice basis, we have $\det(\frac1r u+\frac{a}{r} w,w)=1$, and it follows that $\det(u,v)=r$. Observing now that $C_v$ is spanned by $\frac1m u$ and $\frac1n w$, we are done as $\det(\frac1m u,\frac1n w)=\frac{r}{mn}$.
\end{proof}

\subsection{Monodromy over an $f$-rigid cone}\label{subsec:R-cone-monodromy}

We know from Theorem \ref{Thm:mutable-genus} that the $f$-mutable cones do not contribute to the genus of $X_t$, and so we may ignore them for purposes of the monodromy computation. Assume we have an edge $E$ of $P$ supporting a single $f$-rigid cone, of height $h$ and width $w$, and denote by $X_E$ the inverse image under the map $X_t\to X_0$ of the strict transform of $D_E$ under the resolution $\widetilde{Y_P}\to Y_P$. The strict transform of $D_E$ is a $\PP^1$, and it intersects the adjacent components of the pullback of $D_P$, as well as the exceptional curves coming from resolving the singularities of $f$ on $D_E$. The inverse image $X_E$ is then the part of $X_t$ bounded by the vanishing cycles over these points of intersection. From this and \ref{Thm:mutable-genus} we see that topologically, $X_E$ is a surface with a number of punctures, one for each of these vanishing cycles, with genus equal to $\frac12(h-1)w$, the number of internal points in the $f$-rigid cone (this follows directly from Pick's formula). 

The terms of $f$ along the edge $E$ can be written as $y^h\prod_{i=1}^w(x-\eta_i)$ in suitable coordinates. Now locally at each $\eta_i$ it is not hard to see that $1-tf$ is analytically equivalent to $y^h-t(x+y)$, an $A_{h-1}$-singularity. Indeed, the ``suitable coordinates'' just referred to is the chart of $Y_P$ corresponding to one of the vertices of $E$; in these coordinates, 1 becomes $x^ey^h$ (where $e$ is some positive integer, for now it isn't important which) and $f$ is an ordinary polynomial in $x,y$. The variable change $x\mapsto x-\eta_i$ is best described pictorially by looking at the effect of the Newton polytope of $1-tf$; in the picture circles represent points at height 0 (relative to $t$), while everything else is at height 1. Note that $x^ey^h\mapsto (x-\eta_i)^ey^h$ under this variable change, which changes the one point at height zero (the 1 in $1-tf$) to several points. The points now visible from the origin are $(0,h,0)$, $(0,1,1)$ and $(1,0,1)$, so $1-tf$ is analytically equivalent to $y^h-t(x+y)$ at the singular point.
\[
\begin{tikzpicture}[scale=0.7]
\fill[black!05] (-2,1) -- (-2,0) -- (-2,-4) -- (2,-4) -- (3,-3) -- (4,1);
\node (00) at (0,0) [shape=circle,draw] {};
\node (ypivot) at (-2,0) {$\bullet$};
\node (origin) at (-2,-4) {$\bullet$};
\node (highery) at (-2,1) {};
\node (R-end) at (2,-4) {$\bullet$};
\node (higherx) at (3,-3) {};
\draw[-] (highery) -- (ypivot) to node {$h 
  \begin{cases}
    & \\
& \\
& \\
& \\
& \\
  \end{cases}
  \text{ \emph{} }$} (origin) to node {$\begin{matrix} \\ w\end{matrix}$} (R-end) -- (higherx);

\node (arrowstart) at (5,-1) {};
\node (arrowend) at (7,-1) {};
\draw[|->] (arrowstart) to (arrowend) {};

\fill[black!05] (8,1) -- (8,0) -- (8,-3) -- (9,-4) -- (12,-4) -- (13,-3) -- (14,1);

\node (00new) at (10,0) [shape=circle,draw] {};
\node (underypivotnew) at (8,0) [shape=circle,draw] {};
\node (ypivotnew) at (8,0) {$\bullet$};
\node (morey) at (9,0) [shape=circle,draw] {};
\node (originnew) at (8,-4) {$\cdot$};
\node (higherynew) at (8,1) {};
\node (cone-endnew) at (12,-4) {$\bullet$};
\node (R-endnew) at (9,-4) {$\bullet$};
\node (higherxnew) at (13,-3) {};
\node (breakpoint) at (8,-3) {$\bullet$};
\draw[-] (higherynew) -- (ypivotnew) -- (breakpoint) -- (R-endnew) -- (cone-endnew) -- (higherxnew);
\node (heightlabel) at (7.5,-3.5) {$1\Big\{$};

\end{tikzpicture}
\]
Thus, we have on $D_E$ the $w$ singular points of $f$, each of type $A_{h-1}$, and the two points of intersection with the adjacent components of $D_P$. As $D_E$ has multiplicity $h$, we can now model our $X_E$ as a ramified degree $h$ cover of $\PP^1$, with two ramification points of ramification index $h$ (corresponding to the intersection points with the adjacent divisors), and $w$ ramification points corresponding to the singular points of $f$. More precisely, $X_E$ is homotopic to such a surface, punctured at the two ramification points over the intersections with the adjacent components. The ramification index $e_p$ of these points is found by the Riemann-Hurwitz formula: setting $g=\frac12(h-1)w$ in
\[
2g-2 = -2h +2(h-1) + w(e_p-1)
\]
gives $e_p=h$, so we have a degree $h$ map, ramified at $w+2$ points of ramification index $h$. 

The local monodromy action on $H_1(X_E)$ must then be induced by an automorphism of $X_E$ with $w+2$ fixed points, near which the automorphism has order $h$ (to be compatible with the ramification index); this implies the automorphism has order $h$ everywhere (\emph{a priori} it has order a factor of $h$). We have shown:

\begin{Lemma}\label{Lemma:R-cone-monodromy}
  Let $E$ be a edge of $P$ with an $f$-rigid cone of height $h$ and width $w$. Then the local monodromy action on $H^1(X_E)$, where $X_E$ is as above, is given by an order $h$ automorphism of a genus $\frac12(h-1)w$ surface with $w+2$ fixpoints.
\end{Lemma}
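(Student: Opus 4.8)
The plan is to realize $X_E$ explicitly as a branched cover of the rational curve $D_E\cong\PP^1$ and to identify the monodromy around $t=0$ with the cyclic deck transformation of this cover. The genus is already forced: by Pick's formula the $f$-rigid cone of height $h$ and width $w$ contains $\tfrac12(h-1)w$ internal lattice points, and by Theorem \ref{Thm:mutable-genus} these are exactly the points contributing genus, so $X_E$ has genus $\tfrac12(h-1)w$. What remains is to pin down the automorphism and its fixed points.

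First I would establish the local normal forms along $E$. Writing the edge terms of $f$ as $y^h\prod_{i=1}^w(x-\eta_i)$ in the toric chart of a vertex of $E$, I would check that at each root $\eta_i$ the hypersurface $\{1-tf=0\}$ is analytically an $A_{h-1}$ singularity (the form $y^h-t(x+y)$ read off from the Newton polytope of $1-tf$), and that at the two intersection points of $D_E$ with the neighbouring components of $D_P$ the local equation is $x^my^h=t$, exactly as in Lemma \ref{Lemma:basic-monodromy}. This produces $w+2$ distinguished points on $D_E$.

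Next I would set up $X_E\to D_E\cong\PP^1$ as a degree $h$ cover (the multiplicity of $D_E$ in $D$ being $h$) branched over these $w+2$ points, and feed the known genus into Riemann--Hurwitz: solving $2g-2=-2h+\sum(e_p-1)$ with $g=\tfrac12(h-1)w$ and $w+2$ branch points forces every ramification index $e_p$ to equal $h$, i.e.\ the cover is totally ramified at each of the $w+2$ points. Finally I would identify the monodromy: as $t\to0$ the $h$ sheets collapse onto $D_E$, and transporting $X_t$ once around $t=0$ cyclically permutes the sheets, so the monodromy is the generator of the deck group, an automorphism fixing the $w+2$ totally ramified points. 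Near such a point the action is $\zeta\mapsto\omega\zeta$ with $\omega=e^{2\pi i/h}$, of order exactly $h$; since the automorphism has order dividing $h$ globally but order $h$ near a branch point, it has order $h$ everywhere, which is the assertion.

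The main obstacle I expect is the last step: rigorously matching the analytic monodromy of the family $\{1-tf=0\}$ with the purely topological deck transformation of the cover, and checking that the \emph{same} generator of the cyclic group is induced compatibly at all $w+2$ branch points, so that the local order-$h$ rotations assemble into a single order-$h$ automorphism rather than merely an automorphism of some order dividing $h$. Controlling this compatibility, essentially a local-to-global gluing of the monodromy data, is where the real work lies; the genus computation and the Riemann--Hurwitz bookkeeping are routine once the local models are in hand.
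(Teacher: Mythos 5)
Your proposal is correct and follows essentially the same route as the paper: the same local normal forms ($A_{h-1}$ points at the roots $\eta_i$, and $x^my^h=t$ at the intersections with adjacent components), the same realization of $X_E$ as a degree $h$ cover of $\PP^1$ with Riemann--Hurwitz forcing total ramification at the $w+2$ distinguished points, and the same conclusion that an automorphism of order dividing $h$ with local order $h$ at a fixed point must have order $h$ everywhere. The only cosmetic difference is in the Riemann--Hurwitz bookkeeping: the paper takes the two intersection points as totally ramified (from the local model) and solves for the common index of the remaining $w$ points, whereas you let the degree bound $e_p\le h$ force all $w+2$ indices to equal $h$ simultaneously.
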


There are of course many such automorphisms, but we can limit the possibilities to some extent, by taking advantage of the fact that a Riemann surface is a quotient of a plane (the Euclidean plane if $g=1$, the hyperbolic plane if $g\ge 2$) by a suitable lattice; any order $h$ automorphism of the surface descends from an order $h$ automorphism of the plane. The two cases must be treated separately, but are ultimately quite similar. The fact that the global monodromy of $H_1(X_t)$ is integral (see \cite[5.4.32]{Zoladek}) gives another restriction; we already know (Proposition \ref{Prop:degree-contr}) the total contribution from the vertices of $P$, and whatever numbers we get from the contribution of the $f$-rigid cones must fit with this to produce something integral.

Recall from \ref{Prop:degree-contr} that the total monodromy from the vertices of $P$ is equal to the degree $K_\Pi^2$ of the toric variety defined by the spanning fan of $P$. We may reformulate this to a count of contributions from the edges by using a result of Akhtar and Kasprzyk (\cite[Prop. 3.3]{Al-Mo-singularity-content}). Recall also from section \ref{subsec:iscts}, for a singularity $\sigma$ of type $\frac1r(1,a)$, the numbers $b_1,\ldots,b_{k_\sigma}$ making up the Hirzebruch-Jung continued fraction expansion of $r/a$, and the numbers $s_i,t_i$ ($1\le i\le k_\sigma$) defined in terms of the $b_i$. Let also $d_i=(s_i+t_i)/r -1$, and let $A(\sigma)=k_\sigma+1-\sum_{i=1}^{k_\sigma}d_i^2b_i+2\sum_{i=1}^{k_\sigma-1}d_id_{i+1}$. Note that if $\sigma$ is a primitive $T$-cone, $A(\sigma)=1$.

\begin{Prop}[Akhtar-Kasprzyk,\cite{Al-Mo-singularity-content}]\label{Prop:degree-formula}
  Let $\Pi$ be a complete toric surface with singularity content $(n,\mathcal{B})$. Then 
\[
K_\Pi^2= 12-n-\sum_{\sigma\in \mathcal{B}}A(\sigma).
\]
\end{Prop}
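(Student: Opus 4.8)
The plan is to establish, for an arbitrary complete toric surface $\Pi'$, the clean identity
\[
12 - K_{\Pi'}^2 = \sum_{\sigma} A(\sigma),
\]
where $\sigma$ ranges over the maximal cones of the fan of $\Pi'$, and then to apply it after replacing $\Pi$ by a crepant modification that exhibits the singularity content directly.

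For the identity I would take the minimal resolution $\pi\colon\widetilde{\Pi'}\to\Pi'$. Since $\widetilde{\Pi'}$ is smooth and complete, Noether's formula with $\chi(\OO)=1$ gives $K_{\widetilde{\Pi'}}^2=12-\rho$, where $\rho$ is the number of rays of $\widetilde{\Pi'}$ (equivalently its number of maximal cones). Writing $K_{\widetilde{\Pi'}}=\pi^*K_{\Pi'}+\sum_j d_jF_j$ and using $\pi^*K_{\Pi'}\cdot F_j=0$, one gets $K_{\Pi'}^2=K_{\widetilde{\Pi'}}^2-(\sum_j d_jF_j)^2$. Over a fixed point of type $\frac1r(1,a)$ the exceptional locus is the Hirzebruch--Jung chain $F_1,\dots,F_{k_\sigma}$ with $F_i^2=-b_i$ and $F_iF_{i+1}=1$, whose discrepancies are exactly the numbers $d_i=(s_i+t_i)/r-1$ appearing before the statement (a standard toric computation, cf. \cite[Ch.~10--11]{Cox-Little-Schenk}). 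Hence $(\sum_i d_iF_i)^2=-\sum_i d_i^2b_i+2\sum_i d_id_{i+1}$, so that $A(\sigma)=k_\sigma+1+(\sum_i d_iF_i)^2$. Summing over all maximal cones and writing $\rho=(\#\text{maximal cones of }\Pi')+\sum_\sigma k_\sigma$, the contributions $\sum_\sigma k_\sigma$ and $\sum_\sigma 1$ cancel against $\rho$, leaving $12-K_{\Pi'}^2=\sum_\sigma A(\sigma)$ (smooth cones contribute $A=1$).

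I would then apply this to the toric surface $\widehat\Pi$ obtained from $\Pi$ by subdividing the cone over each edge $E$ (height $h$, width $w=kh+r$) into its $k$ primitive $T$-cones and its $R$-cone of width $r$, as in Definition \ref{Defn:R/T-cones}. The crucial observation is that this subdivision is \emph{crepant}: each new ray passes through a lattice point of $E$, which sits at height $h$ with respect to the inward normal $u$, hence on the affine line $\langle -u/h,\cdot\rangle=1$ spanned by the two endpoints of $E$; this is precisely the support hyperplane computing discrepancies over the cone on $E$, so every new ray has discrepancy $0$. (One also checks that stepping by $h$ along the primitive edge direction from a primitive lattice point lands again on a primitive point, so each piece really is a primitive $T$-cone or an $R$-cone.) Crepancy yields $K_{\widehat\Pi}^2=K_\Pi^2$, and the maximal cones of $\widehat\Pi$ are exactly the $n$ primitive $T$-cones, each with $A=1$, together with the $R$-cones of the basket $\mathcal{B}$. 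Feeding this into the identity gives $12-K_\Pi^2=n+\sum_{\sigma\in\mathcal{B}}A(\sigma)$, as claimed.

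I expect the crepancy of the $T/R$-subdivision to be the heart of the matter. Without it one would have to prove the additivity $A(\text{cone on }E)=k+A(R\text{-cone})$ directly, as an identity between Hirzebruch--Jung continued fractions and their associated $s_i,t_i,b_i$ data --- true, but delicate. Recognising that the subdivision points lie on the canonical ``roof'' of the edge-cone reduces this additivity to a one-line discrepancy calculation, after which only the routine local-to-global bookkeeping of the self-intersections $(\sum_i d_iF_i)^2$ against the definition of $A(\sigma)$ remains.
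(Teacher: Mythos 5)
The paper offers no proof of this proposition for you to be compared against: it is imported as a black box from Akhtar--Kasprzyk \cite{Al-Mo-singularity-content} (their Prop.~3.3). So your argument has to stand on its own, and it does --- it is correct. The master identity $12-K_{\Pi'}^2=\sum_\sigma A(\sigma)$ over \emph{all} maximal cones is right, and your derivation is sound: Noether's formula gives $K_{\widetilde{\Pi'}}^2=12-\rho$; the discrepancies of the Hirzebruch--Jung chain over a point of type $\frac1r(1,a)$ are indeed the numbers $d_i=(s_i+t_i)/r-1$ fixed before the statement (one can check this by adjunction, $K_{\widetilde{\Pi'}}\cdot F_i=b_i-2$, against the linear system determining the $d_i$); and then $A(\sigma)=k_\sigma+1+\bigl(\sum_i d_iF_i\bigr)^2$ makes the ray counts cancel exactly as you say, with smooth cones contributing $1$. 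The second step is also correct, and crepancy of the $T/R$-subdivision is the right key; the only point you leave as a parenthetical assertion --- primitivity of the subdivision points --- deserves its one-line proof, since crepancy genuinely fails at non-primitive generators. Here it is: every subdivision point $q$ lies on $E$ at lattice distance $mh$ from one of the two primitive vertices $p$; if $q=cq'$ with $q'\in N$, then pairing with the primitive inward normal $u$ (against which all of $E$ has height $h$) gives $c\mid h$, and writing $p=q-mhe=cq'-mhe$ (with $e$ the primitive direction of $E$) shows $c$ divides $p$, so $c=1$. Granting this, each new ray generator lies on the affine line where the canonical support function of the cone over $E$ takes its vertex value, all discrepancies vanish, $K_{\widehat\Pi}^2=K_\Pi^2$, and the maximal cones of $\widehat\Pi$ are the $n$ primitive $T$-cones (each with $A=1$, as noted before the statement) together with the basket $\mathcal{B}$; the formula follows. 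Your closing remark is apt: the crepant subdivision is precisely what converts the continued-fraction additivity $A(\text{cone over }E)=k+A(R\text{-cone})$ --- which would otherwise have to be proved by hand --- into a transparent geometric statement, and this makes your write-up a clean, self-contained substitute for the citation.
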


This and \ref{Prop:degree-contr} together give that for each $R$-cone of type $\sigma$, the contribution to the total monodromy of the relative cycle $\alpha$ from the vertices is $\alpha\mapsto \alpha+A(\sigma)\beta$. The number $A(\sigma)$ is in general not an integer, and so the action on $\alpha$ in the local monodromy of the corresponding $H_1(X_E)$ must be of the form $\alpha\mapsto (\text{something})+B(\sigma)\beta$, where $A(\sigma)+B(\sigma)$ is an integer.

We must separate the cases of $g=1$ and $g\ge 2$; in the first case the surface is a quotient of the Euclidean plane by a lattice, and so its automorphism group is isomorphic to $GL_2(\ZZ)$, in the second case the surface is a quotient of the hyperbolic plane, so its automorphism group is isomorphic to a subgroup of $PSL_2(\CC)$. In either case, to have an order $h$ automorphism, the eigenvalues must be $h$'th roots of unity. 

\subsubsection{The case of genus 1}

For genus 1, note that requiring $g=\frac12(h-1)w=1$ implies that either $h=3$ and $w=1$, or $h=w=2$. Thus, the only possible $f$-rigid cones giving a genus one surface are the cones of type $\frac13(1,1)$ and $\frac14(1,1)$; we should have $w+2$ fixpoints, so we want an order 3 automorphism with 3 fixpoints, or an order 2 automorphism with 4 fixpoints.

\begin{Lemma}\label{Lemma:finite-order-GL2}
  Let $A\in GL_2(\ZZ)$. Then
  \begin{enumerate}
  \item if $A^2=I$ and $A$ has exactly 4 fixpoints modulo $\ZZ^2$, then $A=-I$ (here $I$ is the $2\times 2$ identity matrix), and the fixpoints are of the form $(\frac{n}2,\frac{m}2)$ with $n,m=0$ or $1$.
\item if $A^3=I$ and $A$ has exactly 3 fixpoints modulo $\ZZ^2$, then the fixpoints are of the form $(\frac{n}3,\frac{m}3)$ with $n,m=0,1$ or 2, and $A$ is similar to the matrix $\begin{pmatrix}0&-1\\1&-1\end{pmatrix}$.
  \end{enumerate}
\end{Lemma}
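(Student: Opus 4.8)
The plan is to reduce everything to a count of the fixed points of $A$ acting on the torus $T^2=\RR^2/\ZZ^2$, combined with the elementary classification of finite-order elements of $GL_2(\ZZ)$ by their eigenvalues. First I would establish the fixed-point count. A point $x\in T^2$ is fixed by $A$ exactly when $(A-I)x\in\ZZ^2$, so the fixed set is the subgroup $(A-I)^{-1}\ZZ^2/\ZZ^2\le T^2$. Since $A-I$ has integer entries we have $\ZZ^2\sseq (A-I)^{-1}\ZZ^2$, and when $\det(A-I)\neq 0$ the quotient is finite of order $[(A-I)^{-1}\ZZ^2:\ZZ^2]=|\det(A-I)|$: applying the linear isomorphism $A-I$ to the inclusion $\ZZ^2\sseq(A-I)^{-1}\ZZ^2$ carries it to $(A-I)\ZZ^2\sseq\ZZ^2$, which has index $|\det(A-I)|$, and a linear isomorphism preserves indices. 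When $\det(A-I)=0$ the fixed set is positive-dimensional, hence infinite. Thus requiring \emph{exactly} $N$ fixpoints with $N$ finite forces $|\det(A-I)|=N$, and in particular $\det(A-I)\neq 0$.

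Next I would pin down $A$ from its eigenvalues. In case (1), $A^2=I$ means the minimal polynomial divides $(x-1)(x+1)$, which is squarefree, so $A$ is diagonalizable with eigenvalues in $\{\pm1\}$. The three conjugacy types $I$, $\mathrm{diag}(1,-1)$, $-I$ give $\det(A-I)=0,0,4$ respectively, so only $-I$ produces a finite fixed set, of size $4$; since $-I$ is central, this determines $A=-I$ exactly. In case (2), $A^3=I$ means the minimal polynomial divides $x^3-1=(x-1)(x^2+x+1)$, again squarefree, so the eigenvalues are cube roots of unity; as $A$ is real the only options are $A=I$ (eigenvalues $1,1$) or a matrix with non-real conjugate eigenvalues $\omega,\bar\omega$ and characteristic polynomial $x^2+x+1$. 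The former gives $\det(A-I)=0$; for the latter $\det(A-I)=(\omega-1)(\bar\omega-1)=1-(\omega+\bar\omega)+1=3$, so exactly $3$ fixpoints. Hence requiring exactly $3$ fixpoints forces the characteristic polynomial $x^2+x+1$, which is irreducible over $\QQ$ and therefore equal to the minimal polynomial; $A$ is then non-derogatory and conjugate over $\QQ$ to the companion matrix $\begin{pmatrix}0&-1\\1&-1\end{pmatrix}$.

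Finally I would read off the fixed points. For $A=-I$ the condition $(A-I)x=-2x\in\ZZ^2$ says precisely $x\in\tfrac12\ZZ^2\pmod{\ZZ^2}$, the four points $(\tfrac n2,\tfrac m2)$ with $n,m\in\{0,1\}$. In case (2) the fixed group has order $3$, so by Lagrange every fixed point $x$ satisfies $3x\equiv 0\pmod{\ZZ^2}$, i.e.\ $x\in\tfrac13\ZZ^2\pmod{\ZZ^2}$; the three fixpoints are thus of the stated form $(\tfrac n3,\tfrac m3)$ with $n,m\in\{0,1,2\}$. I expect the only real subtlety to be the clean derivation of the count $|\det(A-I)|$ and the care needed to exclude the degenerate eigenvalue configurations—precisely those having $1$ as an eigenvalue—since these yield an infinite fixed set rather than the prescribed finite number; once those are ruled out, identifying the matrix and its fixpoints is routine.
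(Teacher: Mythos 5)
Your approach is genuinely different from the paper's and, for most of the statement, both correct and cleaner: the paper explicitly parametrizes all order-2 and order-3 integer matrices, computes the fixpoint congruences by hand, and reduces to $\begin{pmatrix}0&-1\\1&-1\end{pmatrix}$ via an explicit chain of elementary-matrix conjugations resting on a coprimality observation, whereas you count fixpoints by the identity $\#\mathrm{Fix}(A)=|\det(A-I)|$ (with $\det(A-I)=0$ correctly excluded as giving an infinite fixed set) and then classify by eigenvalues. Part (1), the determinant computation in part (2), and both fixpoint-location claims (directly for $A=-I$, via Lagrange for the order-3 case) are all sound.

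There is, however, a genuine gap at the final step of part (2): you conclude from irreducibility of $x^2+x+1$ that $A$ is conjugate \emph{over $\QQ$} to the companion matrix, but the lemma needs similarity in $GL_2(\ZZ)$. This is not a pedantic distinction: in the application (Proposition \ref{Prop:1311-local-monodromy}) the conjugating matrix is used as a change of basis of the lattice of homology cycles, i.e.\ the cycles $a_1,a_2$ ``in which $\omega'$ has this representation'' must form a $\ZZ$-basis, and the paper's own proof accordingly establishes integral similarity. Rational similarity does not imply integral similarity in general: by the Latimer--MacDuffee correspondence, $GL_2(\ZZ)$-conjugacy classes of integer matrices with irreducible characteristic polynomial $f$ are in bijection with ideal classes of $\ZZ[x]/(f)$, and for polynomials like $x^2+5$ there are non-$\ZZ$-similar matrices that are $\QQ$-similar. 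Your argument can be completed precisely because $\ZZ[x]/(x^2+x+1)\simeq\ZZ[\omega]$ (the Eisenstein integers) is a principal ideal domain: make $\ZZ^2$ a $\ZZ[\omega]$-module by letting $\omega$ act as $A$; it is finitely generated, torsion-free of rank one, hence free of rank one, and if $v$ is a generator then $\{v,Av\}$ is a $\ZZ$-basis of $\ZZ^2$ in which $A$ acts by $v\mapsto Av$, $Av\mapsto A^2v=-v-Av$, which is exactly the companion matrix. Without this step (or an explicit reduction as in the paper) the similarity claim of part (2) is unproven.
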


\begin{proof}
  For (1), it is easy to compute that a $2\times 2$ integer matrix with order 2 is one of the following:
  \begin{itemize}
  \item $\pm I$, or
\item $\begin{pmatrix}a&b\\\frac{1-a^2}{b}&-a\end{pmatrix}$ for $a,b\in \ZZ$ such that $b|1-a^2$.
  \end{itemize}
The reader can now easily verify that of these, only $-I$ has exactly 4 fixpoints modulo $\ZZ^2$.

For (2), observe that order three implies that the eigenvalues must the two primitive third roots of unity, and such a matrix by necessity has determinant 1 and trace -1. Imposing on a general integer matrix $\begin{pmatrix}a&b\\c&d\end{pmatrix}$ these conditions gives a matrix of the form 
\[
\begin{pmatrix}a&b\\ -\frac{a^2+a+1}{b} & -(a+1)\end{pmatrix}
\]
(we may safely assume $b\neq 0$, as assuming either $b=0$ or $c=0$ from the beginning yields a noninteger matrix, because the only solutions to $a+d=-1, ad=1$ are the two primitive third roots of unity). We now find conditions on possible fixpoints: if $(x,y)$ is a fixpoint modulo $\ZZ^2$, that requires
\begin{align*} 
x &\equiv_\ZZ a x+b y\\ 
y &\equiv_\ZZ -\frac{a^2+a+1}{b} x -(a+1)y
\end{align*}
which after some simplification gives $3x\equiv_\ZZ 0$ and $3y\equiv_\ZZ0$. Any fixpoints are thus of the form $(\frac{n}3,\frac{m}3)$, for $n,m=0,1,2$. Now, if $(\frac{n}3,\frac{m}3)$ is fixed, so too is $(\frac{2n}3,\frac{2m}3)$, as 2 and 3 are coprime, so if we want exactly three fixpoints, they are either $(0,0),(\frac13,\frac13),(\frac23,\frac23)$, or $(0,0),(\frac13,\frac23),(\frac23,\frac13)$. These configurations are mirror images of each other, so they are congruent.

By conjugating with elementary matrices, we see that the matrix of the above form with parameters $(a, b)$ is similar to those with parameters $(a\pm b, b)$, $(a, -b)$, $(-a-1, -(a^2+a+1)/b)$, $(a+(a^2+a+1)/b, b+(a^2+a+1)/b+2a+1)$ or $(a-(a^2+a+1)/b, b+(a^2+a+1)/b-2a-1)$. Iterating application of these similarities we can eventually arrive at $(0,-1)$; this is because $a,b$ and $(a^2+a+1)/b$ are necessarily coprime.% er a,b,(a^2+a+1)/b coprime?
\end{proof}

\begin{Prop}\label{Prop:1411-local-monodromy}
  Let $X$ be a Riemann surface of genus one, possessing an order two automorphism $\omega$ with exactly four fixpoints $p_1,p_2,q_1$ and $q_2$, and let $X'$ be $X$ punctured at $p_1$ and $p_2$. On $X'$, let $\alpha$ be a relative cycle passing from $p_1$ to $p_2$, and let $\beta$ be a cycle going once around $p_1$. Then there is a choice of cycles $a_1,a_2$ such that $\{\alpha,\beta,a_1,a_2\}$ is a basis for $H_1(X')$, and the automorphism $\omega$ of $X'$ has an induced action on homology given (in this basis) by the matrix
\[
\begin{pmatrix}
  1&0&0&0\\
1&1&0&0\\
-1&0&-1&0\\
-1&0&0&-1
\end{pmatrix}.
\]
\end{Prop}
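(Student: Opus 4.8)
The plan is to compute everything in the explicit model for $X$ furnished by Lemma \ref{Lemma:finite-order-GL2}. Since $\omega$ has order two with exactly four fixpoints, part (1) of that lemma identifies $\omega$ with $-I$ acting on $X\cong\CC/\Lambda$, i.e. with the elliptic involution $z\mapsto -z$, and identifies the four fixpoints $p_1,p_2,q_1,q_2$ with the four two-torsion points of $\Lambda$. I would fix a lattice $\Lambda=\ZZ e_1\oplus\ZZ e_2$ and place $p_1=0$ and $p_2=\frac12(e_1+e_2)$ (the ``diagonal'' two-torsion point), leaving $q_1=\frac12 e_1$, $q_2=\frac12 e_2$ as the fixpoints that are \emph{not} punctured; in the situation of Lemma \ref{Lemma:R-cone-monodromy} these $q_i$ are the ramification points coming from the singular points of $f$, while $p_1,p_2$ are the ramification points over the intersections with the adjacent components of $D_P$, so this is exactly the geometric configuration at hand.

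With the model in place I would choose explicit representatives: let $a_1,a_2$ be the images in $X$ of the segments $[0,e_1]$ and $[0,e_2]$, so that they generate $H_1$ of the underlying torus; let $\beta$ be a small positively oriented loop around the puncture $p_1$; and let $\alpha$ be the image of the straight segment from $p_1=0$ to $p_2=\frac12(e_1+e_2)$. One must check that these four elements are the appropriate generators for the homology of $X'$ — with $\alpha$ the relative class carrying the arc between the two punctures and $\beta,a_1,a_2$ genuine cycles — and that they are precisely the classes tracked in the global gluing of \S\ref{sec:monodromy}, where $\alpha,\beta$ become the global relative and vanishing cycles and $a_1,a_2$ the two cycles internal to this local piece. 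The statement already grants us the freedom to adapt $a_1,a_2$ to the punctures, which is what the above choice does.

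The three easy columns fall out of the model. Near $p_1$ the involution $z\mapsto -z$ is rotation by $\pi$, which carries the small loop $\beta$ to itself preserving orientation, so $\omega_*\beta=\beta$; and since $\omega=-I$ on $\Lambda$ it reverses each segment $[0,e_i]$, giving $\omega_*a_1=-a_1$ and $\omega_*a_2=-a_2$. These are the second, third and fourth columns of the asserted matrix. For the first column, applying $\omega$ sends $\alpha$ to the segment from $0$ to $-\frac12(e_1+e_2)$, and using $-\frac12(e_1+e_2)\equiv\frac12(e_1+e_2)\pmod\Lambda$ one closes up the difference $\alpha-\omega_*\alpha$ into a genuine cycle of $X$, namely the full diagonal loop $[0,e_1+e_2]=a_1+a_2$. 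This produces the $-a_1-a_2$ part of $\omega_*\alpha=\alpha+\beta-a_1-a_2$.

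The main obstacle is the $+\beta$ term, which is exactly the non-integral local contribution flagged before the proposition. The point is that $\alpha$ and $\omega_*\alpha$ leave the fixed puncture $p_1$ in opposite directions (rotation by $\pi$), so closing the difference into a cycle of the \emph{punctured} surface forces one to run an arc around each puncture, and the two half-turns picked up at $p_1$ and $p_2$ combine to a single copy of $\beta$. I would make this precise by tracking the endpoints of the arc under the rotation, which pins down the coefficient of $\beta$. As consistency checks one then verifies that the resulting matrix squares to the identity, that as an order-two automorphism it has a two-dimensional fixed space and a two-dimensional $(-1)$-eigenspace, and that the coefficient of $\beta$ in $\omega_*\alpha$ is the value $B(\sigma)$ for which $A(\sigma)+B(\sigma)\in\ZZ$, matching the global integrality constraint obtained from Propositions \ref{Prop:degree-contr} and \ref{Prop:degree-formula}.
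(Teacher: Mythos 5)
Your proposal is correct and follows essentially the same route as the paper: both invoke Lemma \ref{Lemma:finite-order-GL2} to identify $\omega$ with $-I$ acting on $\CC/\Lambda$ with the two-torsion points as fixpoints, place $p_1=(0,0)$, $p_2=(\tfrac12,\tfrac12)$, take $a_1,a_2$ to be the edges of the fundamental domain, read off $\omega(a_i)=-a_i$ and $\omega(\beta)=\beta$, and obtain the first column by closing up $\omega(\alpha)-\alpha$ with two half-loops of $\beta$ at the punctures so that it becomes homologous to $-a_1-a_2$. Your treatment of the $\beta$-coefficient is in fact slightly more careful than the paper's (which delegates this to Figure \ref{fig:1411}), and your consistency checks are a welcome addition, but there is no substantive difference in method.
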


\begin{proof}From \ref{Lemma:finite-order-GL2} we have that the automorphism must be $-I$ (modulo $\ZZ^2$), and the fixpoints must be $(0,0),(\frac12,0),(0,\frac12)$ and $(\frac12,\frac12)$. Changing basis in $\ZZ^2$ if necessary, we may assume $p_1=(0,0), p_2=(\frac12,\frac12), q_1=(\frac12,0)$ and $q_2=(0,\frac12)$. Now take the basis cycles $a_1,a_2$ to be the edges of the fundamental domain. It is now clear that $\omega(a_i)=-a_i$ and $\omega(\beta)=\beta$, and observe (see Figure \ref{fig:1411}) that $\omega(\alpha)-\alpha+\frac22\beta$ is homologous to $-a_1-a_2$.
  \begin{figure}
\centering
    \begin{tikzpicture}[scale=5,>=stealth]
      \node (p1) at (0,0) {$p_1$};
\node (p2) at (1/2,1/2) {$p_2$};
\node (p1-2) at (1,0) {$p_1$};
\node (p1-3) at (0,1) {$p_1$};
\node (p1-4) at (1,1) {$p_1$};
\node (q1) at (.5,-0.05) {$q_1$};
\node (q2) at (-0.05,.5) {$q_2$};
\draw[->] (p1) to node [swap] {} (p1-2);
\node (a1) at (.7,-.1) {$a_1$};
\node (a1-2) at (.7,1.1) {$a_1$};
\node (a2) at (-.1,.7) {$a_2$};
\node (a2-2) at (1.1,.7) {$a_2$};
\draw[->] (p1) to node [swap] {} (p1-3);
\draw[->] (p1-3) to node {} (p1-4);
\draw[->] (p1-2) to node {} (p1-4);
\draw[->] (p1) to node {} (p2);
\node (alphalabel) at (.25,.3) {$\alpha$};
\draw[->] (p1-4) to node {} (p2);
\node (omegalabel) at (.7,.8) {$\omega(\alpha)$};
\draw[->] (.6,.6) arc (45:-135:0.15);
\node (beta) at (.6,.3) {$\frac12 \beta$};
\draw[->] (.1,.1) arc (45:180:.15);
\draw[->] (1,1.15) arc (90:225:.15);
\node (beta1) at (-.15,.15) {$\frac12\beta$};
    \end{tikzpicture}
    \caption{The automorphism from Proposition \ref{Prop:1411-local-monodromy}.}\label{fig:1411}
    
  \end{figure}
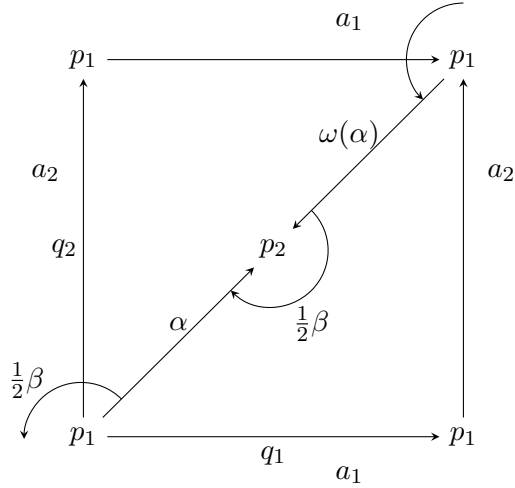
\end{proof}

\begin{Prop}\label{Prop:1311-local-monodromy}
  Let $X$ be a Riemann surface of genus one, possessing an order three automorphism $\omega'$ with three fixpoints $p_1,p_2$ and $q$, and let $X'$ be $X$ with $p_1,p_2$ removed. On $X'$, let $\alpha$ be a relative cycle passing from $p_1$ to $p_2$, and let $\beta$ be a cycle going once around $p_1$. Then there is a choice of cycles $a_1,a_2$ such that $\{\alpha,\beta,a_1,a_2\}$ is a basis for $H_1(X)$, and the automorphism $\omega'$ of $X$ gives an induced action on homology given (in this basis) either by the matrix
\[
\begin{pmatrix}
  1& 0 & 0 & 0\\
\frac13&1&0&-1\\
0&0&0&-1\\
-1&0&1&-1
\end{pmatrix}
\]
or by its inverse.
\end{Prop}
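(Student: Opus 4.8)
The plan is to follow the template of Proposition~\ref{Prop:1411-local-monodromy}, replacing the order-two input by the order-three part of Lemma~\ref{Lemma:finite-order-GL2}. First I would realise $X$ as the quotient $\RR^2/\ZZ^2$ and invoke Lemma~\ref{Lemma:finite-order-GL2}(2): the automorphism $\omega'$ is conjugate to the linear map with matrix $\left(\begin{smallmatrix} 0 & -1 \\ 1 & -1 \end{smallmatrix}\right)$, and its three fixpoints are forced to be either $(0,0),(\tfrac13,\tfrac13),(\tfrac23,\tfrac23)$ or $(0,0),(\tfrac13,\tfrac23),(\tfrac23,\tfrac13)$. These two configurations are mirror images, so fixing one of them orients $\omega'$ and pins down which primitive cube-root eigenvalue is which; the mirror choice replaces the lattice action by its inverse, and this is precisely the origin of the ``or by its inverse'' clause. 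After a lattice change of basis I would place $p_1=(0,0)$ and take the remaining two fixpoints to be $p_2$ and $q$.

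With this normalisation the ``bulk'' cycles $a_1,a_2$ are taken to be the images of the standard lattice generators $(1,0)$ and $(0,1)$, on which $\omega'$ acts directly by the canonical matrix, giving $\omega'(a_1)=a_2$ and $\omega'(a_2)=-a_1-a_2$; this produces the lower-right $2\times 2$ block. The vanishing cycle $\beta$, being a small loop around the fixpoint $p_1$, is carried to itself by the rotational symmetry of $\omega'$ near $p_1$, exactly as in Lemma~\ref{Lemma:basic-monodromy}, so $\omega'(\beta)=\beta$ and the $\beta$-column is the second standard basis vector. These three columns are immediate.

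The substance of the proof, as in Proposition~\ref{Prop:1411-local-monodromy}, is the image of the relative cycle $\alpha$ running from $p_1$ to $p_2$, together with the off-diagonal $\beta$-entries. I would draw the fundamental domain together with $\alpha$ and its image $\omega'(\alpha)$ (both join $p_1$ to the fixed point $p_2$, so their difference is a genuine cycle) and read that difference off as a lattice translation, contributing the $-a_2$ term, plus a winding around the punctures. The winding contributes a multiple of $\beta$, and here the order-three geometry enters: just as the $m$-fold symmetry in Lemma~\ref{Lemma:basic-monodromy} displaced a point by $\tfrac1{mn}$ of the length of $\beta$, the order-three rotation displaces the endpoint of $\alpha$ by one third of $\beta$, yielding the coefficient $\tfrac13$ in $\omega'(\alpha)=\alpha+\tfrac13\beta-a_2$. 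The same picture shows that pushing $a_2$ forward sweeps it once across a puncture, accounting for the $-\beta$ appearing in $\omega'(a_2)=-\beta-a_1-a_2$.

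The main obstacle is exactly this $\beta$-bookkeeping: extracting the fractional coefficient $\tfrac13$ and the integer winding numbers requires a correctly oriented fundamental-domain picture, since (as the paper stresses) the local action on $\alpha$ need not be integral. I would discharge it using the consistency check already available: by Proposition~\ref{Prop:degree-contr} together with Proposition~\ref{Prop:degree-formula}, the contribution of the surrounding vertices to $\alpha$ for a cone of type $\tfrac13(1,1)$ fails to be integral by precisely $A(\sigma)$, and the local $\beta$-coefficient must be its complement so that the assembled global monodromy is integral (\cite[5.4.32]{Zoladek}); this forces the value $\tfrac13$ and removes the sign and orientation ambiguities. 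Finally I would check directly that the two matrices obtained from the mirror configurations are inverse to one another and are quasi-unipotent, with eigenvalues $1,1$ and the two primitive cube roots of unity, confirming that they are legitimate candidates for the local monodromy.
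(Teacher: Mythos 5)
Your overall strategy (reduce to the linear model via Lemma \ref{Lemma:finite-order-GL2}, then track cycles in a fundamental domain) is the paper's strategy, but the step that actually carries the content — pinning down the fractional $\beta$-coefficient and the lattice-cycle corrections — has a genuine gap. Your fallback is the global consistency check: integrality of the assembled monodromy via Propositions \ref{Prop:degree-contr} and \ref{Prop:degree-formula}. That argument is not available here: the proposition concerns an abstract genus-one surface with an order-three automorphism; there is no ambient fibration $X_t$, no $L_f$, and no ``surrounding vertices'' in its hypotheses. And even in the global setting (where the paper does use integrality, but only later, in Theorem \ref{Thm:global-monodromy-1311}, to choose between $\omega'$ and its inverse), integrality determines the $\beta$-coefficient of $\omega'(\alpha)$ only modulo $\ZZ$ — it cannot distinguish $\frac13$ from $\frac43$, and it says nothing about the $a_1,a_2$-components of $\omega'(\alpha)$ or the $\beta$-components of $\omega'(a_i)$. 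The paper avoids all of this with a concrete model: cut the fundamental domain along a diagonal into two triangles, put the punctures $p_1,p_2$ at the centroids and $q$ at the origin, so the three edge-cycles $a_1,a_2,a_3$ avoid the punctures, satisfy $a_1+a_2+a_3=\beta$, and are literally permuted by the rotation; then $\omega'(\alpha)-\alpha+\frac23\beta$ is homologous to $a_1+a_3$, which gives $\omega'(\alpha)=\alpha+\frac13\beta-a_2$ exactly. Your choice $p_1=(0,0)$ instead places a puncture on the standard generators themselves, so even the claim $\omega'(a_1)=a_2$ requires perturbing representatives off the puncture and counting windings — precisely the bookkeeping your proof defers.

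Moreover, that bookkeeping, as you state it, is inconsistent: with $\omega'(\alpha)=\alpha+\frac13\beta-a_2$, $\omega'(a_1)=a_2$, $\omega'(\beta)=\beta$ and your $\omega'(a_2)=-\beta-a_1-a_2$, one computes $\omega'^3(\alpha)=\alpha+2\beta$, so the matrix does not cube to the identity and cannot be induced by an order-three automorphism. The sign must be $+\beta$, i.e.\ $\omega'(a_2)=\beta-a_1-a_2$, as in the paper's own derivation and in the statement of Theorem \ref{Thm:global-monodromy-1311} (the $-1$ in the $(\beta,a_2)$ entry of the matrix printed in Proposition \ref{Prop:1311-local-monodromy} appears to be a typo, so matching it is not a defense). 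Note also that your proposed final sanity check — quasi-unipotence with eigenvalues $1,1$ and the primitive cube roots of unity — would not catch this error: the faulty matrix has exactly those eigenvalues but a nontrivial Jordan block at the eigenvalue $1$. The check that does the work is $M^3=I$, which is where your sign fails.
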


\begin{proof}From \ref{Lemma:finite-order-GL2} we have that the automorphism is similar to the one induced by $\begin{pmatrix}0&-1\\1&-1\end{pmatrix}$; let $a_1,a_2$ be the basis cycles in which $\omega'$ has this representation; we may choose $q=(0,0),p_1=(\frac13,\frac13)$ and $p_2=(\frac23,\frac23)$.

The action on the basis $\{\alpha,\beta,a_1,a_2\}$ can be visualized by constructing a model for $X$ as follows: divide the fundamental domain in $\CC^2$ along one diagonal to form two triangles; label the sides of the fundamental domain by $a_1,a_2$, and label the diagonal by $a_3$, with orientations as indicated in Figure \ref{fig:1311-automorphism}. Let $p_1,p_2$ be the centroids of the triangles, and let $q$ be the origin. We choose two of the cycles $a_i$ as basis cycles, e.g. $a_1$ and $a_2$; we have $\beta=a_1+a_2+a_3$ and $\alpha$ passes from $p_1$ to $p_2$, we may choose it to cross $a_3$. So, in the basis $\{\alpha,\beta,a_1,a_2\}$, we may write $a_3=\beta-a_1-a_2$. 

The effect of $\omega'$ is now to rotate these triangles by one step; we can see that $a_1\mapsto a_2$, $a_2\mapsto a_3=\beta-a_1-a_2$, and $\beta\mapsto \beta$, the only nontrivial thing is $\omega'(\alpha)$. The cycle $\omega'(\alpha)$ passes from $p_1$ to $p_2$ crossing $a_1$ rather than $a_3$, so by adding $\frac13\beta$ near each of the $p_i$ we can form the cycle $\omega'(\alpha)-\alpha+\frac23\beta$, which is homologous to $a_1+a_3$. In other words, we have $\omega'(\alpha)-\alpha+\frac23\beta = a_1+a_3= \beta-a_2$, or
\[
\omega'(\alpha) = \alpha + \frac13\beta -a_2
\]
and we are done.
\end{proof}

Now combining Propositions \ref{Prop:degree-contr} and \ref{Prop:1311-local-monodromy} gives us a main result of this paper:

\begin{Thm}\label{Thm:global-monodromy-1311}
  Let $P$ be a Fano polygon with singularity content $(k,\{n\times \frac13(1,1)\}$, and let $X_t$ be defined using a generic maximally mutable Laurent polynomial $f$ with $Newt(f)=P$.
Then there is a basis $\{\alpha,\beta,a^1_1,a^1_2,\ldots, a^n_1,a^n_2\}$ of cycles in $H_1(X_t,\ZZ)$ such that in terms of this basis, the monodromy automorphism $\omega$ of $H_1(X_t,\ZZ)$ is given by
  \begin{itemize}
  \item $\omega(\alpha) = \alpha + (k+2n-12)\beta -\sum_{j=1}^na_2^j$,
\item $\omega(\beta) = \beta$,
\item $\omega(a_1^j) = a_2^j$ for $1\le j\le n$, and
\item $\omega(a_2^j) = \beta-a_1^j-a_2^j$ for $1\le j\le n$.
  \end{itemize}
\end{Thm}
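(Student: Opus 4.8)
The plan is to assemble the global monodromy out of the local contributions already established in the preceding propositions, exploiting the fact (noted before Lemma \ref{Lemma:R-cone-monodromy}) that the vanishing cycle $\beta$ is common to every local piece, so the local monodromy automorphisms commute and the total monodromy $\omega$ is simply their composition.

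First I would fix the basis and pin down the genus. Because $f$ is a generic MMLP, the $f$-rigid cones are exactly the $R$-cones, and each $\frac13(1,1)$ cone has height $3$ and width $1$, hence $\frac12(h-1)w = 1$ internal lattice point; by Theorem \ref{Thm:mutable-genus} the genus of $X_t$ is therefore $n+1$ (the $n$ $R$-cones plus the origin), so $H_1(X_t,\ZZ)\cong\ZZ^{2n+2}$. Two of these classes are the global relative cycle $\alpha$ and the vanishing cycle $\beta$; each edge $E_j$ carrying an $R$-cone contributes a genus-one handle with its own pair $a_1^j,a_2^j$ exactly as in Proposition \ref{Prop:1311-local-monodromy}. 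This produces the asserted basis $\{\alpha,\beta,a_1^1,a_2^1,\ldots,a_1^n,a_2^n\}$.

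Next I would compute the two kinds of contributions. The vertex contribution is given by Proposition \ref{Prop:degree-contr} as $\alpha\mapsto\alpha - K_\Pi^2\beta$, $\beta\mapsto\beta$, and I would evaluate $K_\Pi^2$ via Proposition \ref{Prop:degree-formula}. A short computation gives $A(\tfrac13(1,1)) = \tfrac53$: the Hirzebruch--Jung expansion of $3/1$ is $[3]$, so $k_\sigma = 1$, $s_1 = t_1 = 1$, $d_1 = -\tfrac13$, and $A = 2 - d_1^2 b_1 = 2 - \tfrac13 = \tfrac53$. Since the number of $T$-cones is $k$, this yields $K_\Pi^2 = 12 - k - \tfrac{5n}{3}$, so the vertex action on $\alpha$ is $\alpha\mapsto\alpha + (k + \tfrac{5n}{3} - 12)\beta$. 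The local contribution of each $R$-cone $E_j$ is read off from Proposition \ref{Prop:1311-local-monodromy}: $\alpha\mapsto\alpha + \tfrac13\beta - a_2^j$, $a_1^j\mapsto a_2^j$, $a_2^j\mapsto\beta - a_1^j - a_2^j$, and $\beta\mapsto\beta$.

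Finally I would compose these commuting automorphisms. As the classes $a_i^j$ are supported in disjoint local pieces and every factor fixes $\beta$, composing the vertex action with the $n$ local $R$-cone actions merely adds the $\beta$-coefficients on $\alpha$ and collects the $-a_2^j$ terms: the coefficient of $\beta$ becomes $(k + \tfrac{5n}{3} - 12) + n\cdot\tfrac13 = k + 2n - 12$, giving $\omega(\alpha) = \alpha + (k+2n-12)\beta - \sum_{j=1}^n a_2^j$, while the actions on $\beta$ and on each $a_i^j$ are inherited unchanged from the local pieces. I expect the main obstacle to be justifying rigorously that the global monodromy really is this composition: each local action on $\alpha$ is non-integral (the $\tfrac13\beta$ from each $R$-cone, and the fractional part of $K_\Pi^2$), so one must verify that cutting $X_t$ along the homologous copies of $\beta$ into a vertex piece and $n$ $R$-cone pieces is consistent, and that the fractional $\beta$-contributions sum to the integer $k+2n-12$. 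This is precisely the compatibility $A(\sigma)+B(\sigma) = \tfrac53 + \tfrac13 = 2\in\ZZ$ flagged in the discussion before the statement, and it is what guarantees that $\omega$ is a well-defined integral automorphism of $H_1(X_t,\ZZ)$.
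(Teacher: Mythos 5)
Your proposal is correct and follows essentially the same route as the paper: combine the vertex contribution (Propositions \ref{Prop:degree-contr} and \ref{Prop:degree-formula}, with $A(\tfrac13(1,1))=\tfrac53$) with the local actions of Proposition \ref{Prop:1311-local-monodromy}, and sum the $\beta$-coefficients to get the integer $k+2n-12$. The integrality compatibility $A(\sigma)+B(\sigma)=\tfrac53+\tfrac13=2\in\ZZ$ that you flag at the end as a consistency check is in fact the substance of the paper's proof: it is precisely what selects $\omega'$ over its inverse among the two candidates left open by Proposition \ref{Prop:1311-local-monodromy}.
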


\begin{proof}
 For each edge $E$ with a $\frac13(1,1)$-cone, \ref{Prop:1311-local-monodromy} gives us two candidates for the local monodromy automorphism of $H_1(X_E)$: the automorphism $\omega'$, and its inverse; the only thing we have to do is find out which one gives integral global monodromy. Now,
\[
\omega'(\alpha) = \alpha + \frac13\beta -a_2
\]
while
\[
\omega'^{-1}(\alpha) = \alpha - \frac13\beta +a_1,
\]
and from \ref{Prop:degree-formula} (from which we compute $A(\frac13(1,1))=\frac53$) and \ref{Prop:degree-contr} we have that only $\omega'$ gives integral global monodromy.
\end{proof}

\begin{figure}
\centering
\begin{tikzpicture}[auto,>=stealth]
\node (00) at (0,0) {$\bullet$};
\node (24) at (2,4) {$\bullet$};
\node (40) at (4,0) {$\bullet$};
\node (46) at (6,4) {$\bullet$};
\node (hole1) at (2,1.5) {$p_1$};
\node (hole2) at (4,2.5) {$p_2$};
\draw[->] (00) to node [swap] {$a_2$} (40);
\draw[->] (24) to node [swap] {$a_1$} (00);
\draw[->,dashed] (40) to (24);
\draw[->] (24) to node {$a_2$} (46);
\draw[->] (46) to node {$a_1$} (40);
\node (a3label) at (3,3) {$a_3$};
\draw[->] (hole1) to (hole2);
\node (alphalabel) at (2.7,1.5) {$\alpha$};
\node (edge1) at (1,2) {};
\node (edge2) at (5,2) {};
\draw[->] (hole1) to (edge1);
\draw[->] (edge2) to (hole2);
\node (omegalabel1) at (1.3,1.4) {$\omega(\alpha)$};
\node (omegalabel2) at (4.7,2.6) {$\omega(\alpha)$};
\draw[->] (2.4,1.7) arc (35:135:0.5);
\draw[<-] (3.6,2.3) arc (215:315:0.5);
\node (betalabel1) at (2,2.3) {$\frac13\beta$};
\node (betalabel2) at (4,1.7) {$\frac13\beta$};
\end{tikzpicture}
\caption{The automorphism $\omega'$ from Proposition \ref{Prop:1311-local-monodromy}}\label{fig:1311-automorphism}
\end{figure}
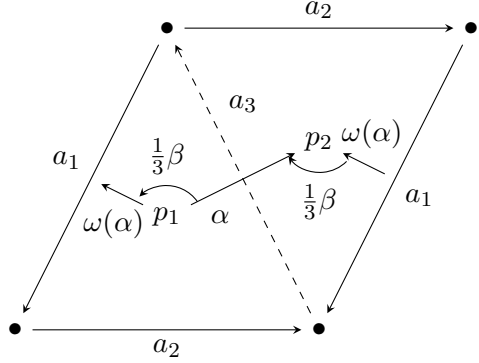

\subsubsection{The case of $g\ge 2$}

For general genus, we instead have a quotient of the hyperbolic plane, by some hyperbolic lattice. Here we are unable to pin down the fixpoints as we did in the previous case, but we can confine the possibilities very strongly (the following result is surely well known, but we are unable to locate an exact reference):

\begin{Lemma}\label{Lemma:hyperbolic-plane-automorphisms}
Up to conjugation, there are only $n$ order $n$ elements of $PSL_2(\RR)$.
\end{Lemma}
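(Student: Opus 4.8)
The plan is to use the trace classification of $PSL_2(\RR)$ into elliptic, parabolic and hyperbolic elements, observe that only elliptic elements can have finite order, reduce these to standard rotations, and then count conjugacy classes by their (oriented) rotation angle.

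First I would establish that any element $g$ with $g^n=1$ is \emph{elliptic}. Lifting $g$ to $\tilde g\in SL_2(\RR)$, the relation $g^n=1$ gives $\tilde g^n=\pm I$, hence $\tilde g^{2n}=I$, so $\tilde g$ is a finite-order real matrix. Such a matrix is semisimple with eigenvalues that are roots of unity, and for a real $2\times 2$ matrix of determinant $1$ the eigenvalue pair $\{\lambda,\lambda^{-1}\}$ is therefore either $\{1,1\}$ or $\{-1,-1\}$ (forcing $\tilde g=\pm I$ by semisimplicity, i.e. $g=1$ in $PSL_2(\RR)$) or a genuine complex-conjugate pair $e^{\pm i\theta}$ on the unit circle. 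The parabolic case (a repeated eigenvalue $\pm 1$ with a nontrivial Jordan block) and the hyperbolic case (real $\lambda$ with $|\lambda|\neq 1$) are precisely the ones with infinite order, so they are excluded. In the remaining case the real canonical form shows $\tilde g$ is $GL_2(\RR)$-conjugate to the rotation $R_\theta\in SO(2)$, and since conjugating $P$ may be rescaled by a scalar without changing $P^{-1}\tilde gP$, one may arrange the conjugator to lie in $SL_2(\RR)$, so $\tilde g$ is $SL_2(\RR)$-conjugate to $R_\theta$ (up to the sign of $\theta$, which is the subtlety addressed below).

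Next I would parametrize conjugacy classes of elliptic elements by the rotation angle. Passing to $PSL_2(\RR)$ identifies $R_\theta$ with $R_{\theta+\pi}$, so we may take $\theta\in[0,\pi)$. The class $[R_\theta]$ satisfies $[R_\theta]^n=1$ exactly when $R_{n\theta}=\pm I$, i.e. $n\theta\in\pi\ZZ$, so within $[0,\pi)$ the admissible angles are precisely $\theta=k\pi/n$ for $k=0,1,\dots,n-1$, giving $n$ candidate classes. It remains to check that these $n$ rotations are pairwise non-conjugate in $PSL_2(\RR)$, which I would do by viewing $PSL_2(\RR)$ as the orientation-preserving isometries of the hyperbolic plane: an elliptic element is rotation about its unique fixed point, and an orientation-preserving conjugation carries it to a rotation about the image point through the \emph{same signed angle}. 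Hence the oriented rotation angle is a complete conjugacy invariant, and the $n$ angles $k\pi/n$ yield $n$ distinct classes.

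The main obstacle is exactly this last distinction. The naive conjugacy invariant, the trace (defined only up to sign in $PSL_2(\RR)$), gives $|2\cos(k\pi/n)|$, and since $\cos((n-k)\pi/n)=-\cos(k\pi/n)$, the values $k$ and $n-k$ share the same trace; equivalently $R_\theta$ and its inverse $R_{-\theta}$ have identical eigenvalues. Thus linear algebra alone cannot separate a rotation from its inverse, and one must invoke the orientation-preserving nature of $PSL_2(\RR)$ (a rotation and its inverse become conjugate only in $PGL_2(\RR)$, via a reflection) to conclude that the signed angle, rather than merely the eigenvalue pair, classifies the conjugacy class. Once this point is settled, the count of $n$ classes is immediate and the remaining reductions are routine.
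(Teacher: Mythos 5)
Your proof is correct, and although it starts from the same linear-algebra observation as the paper (a finite-order element of $PSL_2(\RR)$ has root-of-unity eigenvalues, hence is elliptic), it diverges at the decisive step, and in a way that matters. The paper exhibits, for any two determinant-one matrices with the common trace $2\cos(2\pi/n)$, an explicit conjugating matrix and then counts the $n$ roots of unity. But that conjugator has determinant $\frac{d(a^2-2a\cos(2\pi/n)+1)}{b(c^2-2c\cos(2\pi/n)+1)}$, whose sign is the sign of $d/b$; since rescaling a matrix multiplies its determinant by a positive number, it cannot in general be normalized into $SL_2(\RR)$, so what the paper's computation actually establishes is $PGL_2(\RR)$-conjugacy. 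Under $PGL_2(\RR)$-conjugacy a rotation and its inverse become conjugate (via a reflection), and the class count would be $\lfloor n/2\rfloor+1$ rather than $n$; the paper's own count of $n$ is only consistent with genuine $PSL_2(\RR)$-conjugacy. Your argument supplies exactly the missing ingredient: the signed rotation angle is a complete invariant for elliptic elements under orientation-preserving conjugation, so $R_\theta\not\sim R_{-\theta}$ in $PSL_2(\RR)$ and the $n$ angles $k\pi/n$, $k=0,\ldots,n-1$, give $n$ pairwise distinct classes. Thus your route not only proves the statement but repairs a real oversight in the paper's argument. One shared caveat: like the paper, you are counting classes of elements of order \emph{dividing} $n$ (those with $[R_\theta]^n=1$); elements of order exactly $n$ would number $\phi(n)$. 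That reading is the right one, since what the paper needs later is that every order-$h$ automorphism is, up to conjugacy, a power of a single primitive one.
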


\begin{proof}
Elements of $PSL_2(\RR)$ are represented by $2\times 2$ real matrices with determinant one. A matrix of order $n$ must have as eigenvalues $n$'th roots of unity, and as the determinant is one, the two eigenvalues must be mutually inverse, and the trace must be $2\cos(\frac{2\pi}n)$. A matrix with this trace and determinant is of the form
\[
\begin{pmatrix}
  a & b\\
\frac1b(-a^2+2a\cos(2\pi/n)-1) & \cos(2\pi/n)-a\\
\end{pmatrix}
\]
where $b\neq 0$. All such matrices are similar, as the matrix with parameters $a,b$ can be transformed to the matrix with parameters $c,d$ by conjugating with the real matrix 
\[
\begin{pmatrix}
\frac{d (a^2-2\cos(2\pi/n) a+1)}{b(c^2-2\cos(2\pi/n) c+1)} & \frac{(a-c) d}{c^2-2\cos(2\pi/n) c+1} \\
 0 & 1 \\
\end{pmatrix}
\]
(this is ok as $x^2-2\cos(2\pi/n)x+1$ has no real roots). There are $n$ possible $n$'th roots of unity, so from this the result follows.
\end{proof}

The automorphism group of a genus $g\ge 2$ surface is a subgroup of $PSL_2(\RR)$, so this implies that if we can find one primitive order $h$ automorphism of our surface with the required number of fixpoints, its powers will give us all the other possibilities, up to choice of basis.

To give a complete description valid for any $P$ would require computing the number $A(\sigma)$ of Proposition \ref{Prop:degree-formula} for all singularity types $\sigma=\frac1r(1,a)$ corresponding to an $R$-singularity (it is always 1 for a primitive $T$-singularity), and finding an automorphism of a curve of the appropriate genus (with appropriate order and number of fixed points) that compensates for the non-integer part of $A(\sigma)$. We will do this by giving a model for the surface in similar fashion to Figure \ref{fig:1311-automorphism} and finding a description in terms of cycles of the resulting automorphism. We can not give a concise general formula for what power of this automorphism is the right one, but it reduces case-by-case to modular arithmetic and is easily doable by hand; we compute some simple cases in Proposition \ref{Prop:powers}.

Let us first restrict attention to the simplest case of an $R$-cone with lattice width $1$; here by necessity the lattice height $r\ge 3$ must be odd (otherwise the cone can't be spanned by primitive lattice vectors).

Suppose now we have an edge $E$ with an $R$-cone of height $r$ and width one, and as in the genus one case let $X_E$ be the inverse image under $X_t\to X_0$ of the proper transform of $D_E$ under the resolution $\widetilde{Y_P}\to Y_P$. By \ref{Lemma:R-cone-monodromy} the local monodromy automorphism of $H_1(X_E)$ is induced by an order $r$ automorphism of a genus $(r-1)/2$ Riemann surface with three fixed points (two punctures, and one other distinguished point), and by \ref{Lemma:hyperbolic-plane-automorphisms} it is enough to find one such automorphism; one of its powers will be the one that induces the local monodromy automorphism on $H_1(X_E)$.

\begin{Prop}\label{Prop:w1-local-monodromy}
  Let $X$ be a Riemann surface of genus $(r-1)/2$, with two removed points $p_1,p_2$ and a third distinguished point $q$, and an order $r$ automorphism that fixes these points. Let $\alpha$ be a relative cycle passing from $p_1$ to $p_2$, and let $\beta$ be a cycle going once around $p_1$. Then there is a choice of cycles $c_1,\ldots,c_{r-1}$ such that $\{\alpha,\beta,c_1,\ldots,c_{r-1}\}$ is a basis for $H_1(X)$, and the automorphism is a power of the automorphism $\omega_r$ given on homology cycles by 
  \begin{itemize}
  \item $\alpha \mapsto \alpha +(1-\frac4r)\beta+c_1-\sum_{i=3}^{r-1}c_i$
\item $\beta \mapsto \beta$,
\item $c_i \mapsto c_{i+1}$ for $1\le i\le r-2$, and
\item $c_{r-1} \mapsto \beta - \sum_{i=1}^{r-1}c_i$.
  \end{itemize}
\end{Prop}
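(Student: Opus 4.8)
The plan is to follow the template of the genus-one cases (Propositions \ref{Prop:1411-local-monodromy} and \ref{Prop:1311-local-monodromy}), replacing the Euclidean fundamental domain by a cyclically symmetric model of the higher-genus surface. First I would record that, by \ref{Lemma:R-cone-monodromy}, $X$ carries an order $r$ automorphism fixing the three marked points, and that Riemann-Hurwitz with quotient $\PP^1$ and three totally ramified fibres reproduces the genus $(r-1)/2$; hence the quotient $X/\langle\omega\rangle$ is a sphere with three cone points of order $r$. For $r\ge 5$ the surface has genus $\ge 2$, so by \ref{Lemma:hyperbolic-plane-automorphisms} all order $r$ automorphisms are powers of a single one up to conjugacy; it therefore suffices to construct one explicit order $r$ automorphism $\omega_r$ with the stated fixed points and compute its action on homology, after which the true local monodromy is some power of $\omega_r$ (the correct power being pinned down separately by integrality, cf.\ \ref{Prop:powers}). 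The case $r=3$ is genus one and is already covered by \ref{Prop:1311-local-monodromy}.

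Next I would build the model concretely by realizing $X$ as the $r$-fold cyclic cover of $\PP^1$ branched at the three images of $p_1,p_2,q$, with $\omega_r$ a generator of the deck group; equivalently, I would assemble $r$ copies of a fundamental sheet around the three fixed points, generalizing the triangulated parallelogram of Figure \ref{fig:1311-automorphism}. I would then take $\alpha$ to be the lift to one sheet of a path joining $p_1$ to $p_2$, let $\beta$ be a small loop around $p_1$, and let $c_1,\dots,c_r$ be the ``sheet-difference'' cycles, on which $\omega_r$ acts by the cyclic shift. The relations $\beta\mapsto\beta$ and $c_i\mapsto c_{i+1}$ are then immediate from the deck action, and closing up the cycle of sheets around the ramification locus produces the single relation $\sum_{i=1}^r c_i=\beta$; rewriting $c_r$ through this relation gives exactly $c_{r-1}\mapsto \beta-\sum_{i=1}^{r-1}c_i$ and shows that $\{\alpha,\beta,c_1,\dots,c_{r-1}\}$ generates $H_1(X)$.

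The heart of the argument is the computation of $\omega_r(\alpha)$. As in \ref{Lemma:basic-monodromy}, carrying the two endpoints of $\alpha$ once around the fixed points $p_1$ and $p_2$ under the rotation accumulates fractional multiples of $\beta$ (the local picture at each puncture being a rotation by $2\pi/r$ of an $r$-fold ramification point), and these should assemble to the coefficient $(1-\tfrac4r)$; meanwhile the displacement of $\alpha$ across the sheets contributes the integral combination $c_1-\sum_{i=3}^{r-1}c_i$. I would extract this by drawing $\alpha$ and $\omega_r(\alpha)$ explicitly on the model, exactly as in Figures \ref{fig:1411} and \ref{fig:1311-automorphism}, and reading off the difference.

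The main obstacle is precisely this last bookkeeping: pinning down the fractional $\beta$-coefficient $-\tfrac4r$ from the local rotations at the two punctures, and determining the exact sign pattern $+c_1-\sum_{i=3}^{r-1}c_i$---in particular the conspicuous absence of $c_2$---as $\alpha$ is dragged across the fundamental domain. This demands a careful, explicit choice of representative path for $\alpha$ and of the edge it crosses, and is more delicate than the genus-one computations because there are now $r-1$ genus cycles to track rather than two; everything else is a routine transcription of the earlier arguments.
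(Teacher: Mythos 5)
Your framework coincides with the paper's: the two-$r$-gon gluing used in the paper's proof is precisely a fundamental-domain picture of the $r$-fold cyclic cover of $\PP^1$ branched at three points that you describe, your ``sheet-difference'' cycles are its edge cycles $c_1,\ldots,c_r$, and the relation $\sum_{i=1}^r c_i=\beta$ together with the cyclic shift $c_i\mapsto c_{i+1}$ is exactly how the paper obtains the action on $\beta$, the $c_i$, and the relation $c_r=\beta-\sum_{i=1}^{r-1}c_i$. However, your proposal stops short of the one step that constitutes the actual content of the proof: computing the image of the relative cycle $\alpha$. You explicitly defer this (``the main obstacle is precisely this last bookkeeping''), so what you have is a plan rather than a proof. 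The missing argument is short: if $\alpha$ crosses the edge $c_r$, then $\omega_r(\alpha)$ crosses $c_1$, and closing up $\omega_r(\alpha)-\alpha$ by an arc equal to $\frac1r\beta$ at each of the two punctures produces a closed cycle homologous to $c_1+c_r=\beta-\sum_{i=2}^{r-1}c_i$; hence
\[
\omega_r(\alpha)=\alpha+\Bigl(1-\tfrac2r\Bigr)\beta-\sum_{i=2}^{r-1}c_i.
\]

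This brings out a second, more serious problem with your plan as stated: the exact target you set yourself---recovering the coefficient $1-\frac4r$ and the sign pattern $c_1-\sum_{i=3}^{r-1}c_i$ from a single rotation step---is unattainable, and chasing it would have led you in circles. A single step of the rotation gives the formula displayed above, with coefficient $1-\frac2r$ and the full sum $\sum_{i=2}^{r-1}c_i$; the formula displayed in the Proposition is the \emph{square} of this map, as one checks by applying the shift formulas once more and simplifying with $c_r=\beta-\sum_{i=1}^{r-1}c_i$. The discrepancy is harmless for the truth of the statement, because the Proposition only asserts that the local monodromy is \emph{a power of} $\omega_r$, and since $r$ is odd the powers of the one-step rotation and of its square are the same subgroup; this also dissolves your puzzlement about the ``conspicuous absence of $c_2$.'' But your write-up treats the displayed formula as what one rotation step must produce, and that part of the plan, as stated, would fail; you need either to compute the one-step formula and then square it, or to note the ``power of'' reduction explicitly.
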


\begin{proof}
  Recall from the genus one case where we constructed a model for the surface by gluing together two triangles, and an order 3 automorphism with three fixpoints by rotating the triangles. For the $w=1,r\ge 3$ odd, case we will construct a model in a similar way; gluing together two $r$-gons to make a genus $(r-1)/2$ surface, and getting an order $r$ automorphism with three fixpoints by rotating the $r$-gons. 

More precisely: Take two regular $r$-gons, with edges labelled $c_1,\ldots,c_r$ going around in the positive direction. On one $r$-gon choose an orientation for each edge, and one the other give the corresponding edges the opposite orientation. Finally, identify edges with the same label according to their orientation; this gluing is easily seen to give a surface of the desired genus. Let the punctures $p_i$ be the centroids of the $r$-gons, and let $q$ be a vertex of an $r$-gon (these all are identified by the gluing). Let $\alpha$ be the relative cycle going from $p_1$ to $p_2$ across $c_r$, and let $\beta$ be the cycle going once around $p_1$. Choosing $2g=r-1$ of the $c_i$'s, e.g. $c_1,\ldots,c_{r-1}$, we have a basis $\{\alpha,\beta,c_1,\ldots,c_{r-1}\}$ of $H_1(X)$; the cycle $c_r$ can be expressed as $\beta-\sum_{i=1}^{r-1}c_i$ (see Figure \ref{fig:omega-5} for an illustration).

Now define the automorphism $\omega_r$ by 
\[
c_1\mapsto c_2\mapsto \cdots \mapsto c_r\mapsto c_1,
\]
which we may visualize as rotating the $r$-gons in the positive direction. It is clear that $\omega_r$ has order $r$ and fixes $p_1,p_2$ and $q$. The cycle $\beta$, which can be identified with $\sum_{i=1}^rc_r$, is clearly fixed. For the cycle $\alpha$, its image $\omega_r(\alpha)$ goes from $p_1$ to $p_2$ crossing $c_1$, and similar to the genus one case, the cycle $\omega_r(\alpha)-\alpha+\frac2r\beta$ is homologous to $c_1+c_r=\beta-\sum_{i=2}^{r-1}c_i$. From this we find
\[
\omega_r(\alpha)=\alpha + \frac{r-2}{r}\beta -\sum_{i=2}^{r-1}c_i.
\]
\end{proof}

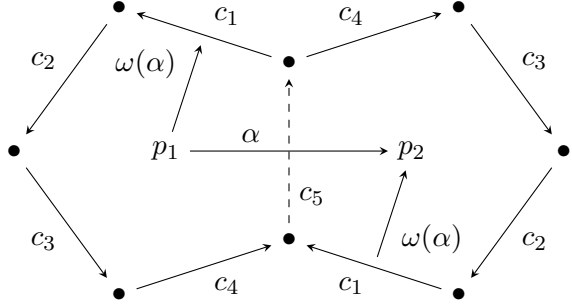
\begin{figure}
\centering
\begin{tikzpicture}[auto,>=stealth]
\node (p1) at (0,0) {$p_1$};
\node (q1) at (36:2) {$\bullet$};
\node (q2) at (108:2cm) {$\bullet$};
\node (q3) at (180:2cm) {$\bullet$};
\node (q4) at (-108:2cm) {$\bullet$};
\node (q5) at (-36:2cm) {$\bullet$};
\node (p2) at (4*cos 36,0) {$p_2$};
\node (q8) at (4*cos 36 + 2*cos 72 ,2*sin 72 ) {$\bullet$};
\node (q6) at (4*cos 36 + 2*cos 72,-2*sin 72) {$\bullet$};
\node (q7) at (2+4*cos 36,0) {$\bullet$};
\draw [->] (q1) to node [swap] {$c_1$} (q2);
\draw [->] (q2) to node [swap] {$c_2$} (q3); 
\draw [->] (q3) to node [swap] {$c_3$} (q4); 
\draw [->] (q4) to node [swap] {$c_4$} (q5); 
\draw [->,dashed] (q5) to (q1);
\draw [->] (q1) to node {$c_4$} (q8); 
\draw [->] (q8) to node {$c_3$} (q7); 
\draw [->] (q7) to node {$c_2$} (q6); 
\draw [->] (q6) to node {$c_1$} (q5);
\draw [->] (p1) to (p2);

\node (half1) at (cos 36+cos 108,sin 36+sin 108) {};
\draw [->] (p1) to node {$\omega(\alpha)$} (half1);

\node (half2) at (3*cos 36 - cos 108,-sin 36-sin 108) {};
\draw [->] (half2) to node [swap] {$\omega(\alpha)$} (p2);

\node (c5label) at (2*cos 36+.3,-.6) {$c_5$};
\node (alphalabel) at (2*cos 36-.5,.2) {$\alpha$};

\end{tikzpicture}
\caption{The automorphism $\omega_5$ in the model of a genus 2 surface.}\label{fig:omega-5}
\end{figure}

Now as above let $E$ be an edge of $P$ with an $R$-cone of height $r$ and width one. By \ref{Lemma:R-cone-monodromy}, \ref{Lemma:hyperbolic-plane-automorphisms} and \ref{Prop:w1-local-monodromy} the local monodromy automorphism of $H_1(X_E)$ is induced by some power of the map $\omega_r$ defined in \ref{Prop:w1-local-monodromy}. To find which it is necessary to compute the number $A(\sigma)$ for the singularity type $\sigma=\frac1r(1,a)$ of the $R$-cone, and find a power $\omega_r^k$ of $\omega_r$ such that $\omega_r^k(\alpha)+A(\sigma)\beta$ is integral in the basis from \ref{Prop:w1-local-monodromy}. We will do this explicitly for the simplest cases $\frac1r(1,a)$ with $a=1,2$ or 3; it is not a hard computation, but we do not have a general expression yet, and must do it case-by-case.

It is straightforward to verify that
\[
\omega_r^k(\alpha)=\alpha+(1-\frac{2k}r)\beta+\sum_{i=1}^{k-1}c_i-\sum_{i=k+1}^{r-1}c_i,
\]
so for each $\sigma=\frac1r(1,a)$, we want to find a $k$ such that $A(\sigma)+(1-\frac{2k}r)$ is an integer. %With that in mind, let $k(\sigma)$ be the smallest positive such $k$.

\begin{Prop}\label{Prop:powers}
  Let $A(\sigma)$ be the number defined in \ref{Prop:degree-formula}, and let $m(\sigma)$ be an integer such that $A(\sigma)+1-\frac{2m(\sigma)}r$ is an integer. Then
  \begin{enumerate}
  \item $m(\frac1r(1,1)) \equiv_r -2$,
\item $m(\frac1r(1,2)) \equiv_r 2-l^2$, where $r=2l+1$, and
\item $m(\frac1r(1,3)) \equiv_r   
\begin{cases}    
2(l-1) &\text{ if }r=3l+1\\
-8(l+1) &\text{ if }r=3l+2.  
\end{cases}$
  \end{enumerate}

\end{Prop}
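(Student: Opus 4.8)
The plan is to reduce the statement to an explicit computation of the rational number $A(\sigma)$ for each of the three singularity types, followed by solving a single linear congruence modulo $r$ in each case. The condition defining $m(\sigma)$, namely $A(\sigma)+1-\frac{2m(\sigma)}{r}\in\ZZ$, says exactly that $2m(\sigma)\equiv\rho\pmod r$, where $\rho$ is the integer with $A(\sigma)\equiv \rho/r\pmod\ZZ$. Since width $1$ forces $r$ to be odd (as noted just before \ref{Prop:w1-local-monodromy}), $2$ is invertible modulo $r$, so each such congruence has a unique solution modulo $r$. Thus the entire problem is to compute $A(\sigma)$ as a rational number whose denominator divides $r$, and then invert $2$.

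First I would write down, for each type, the Hirzebruch--Jung continued fraction of $r/a$ that feeds into the definition of $A(\sigma)$ from \ref{Prop:degree-formula}. For $a=1$ this is simply $[r]$ (so $k=1$); for $a=2$ with $r=2l+1$ it is $[l+1,2]$; and for $a=3$ it bifurcates according to $r\bmod 3$, giving $[l+1,2,2]$ when $r=3l+1$ and $[l+1,3]$ when $r=3l+2$. This bifurcation is precisely the origin of the two cases in part (3). In each case I would then compute the auxiliary integers $s_i,t_i$ from the recursion $s_{i+1}+s_{i-1}=b_is_i$ and its $t$-analogue of \ref{Lemma:blowup-numbers}(1), together with the boundary values $s_1=t_k=1$, $s_0=t_{k+1}=0$, $s_{k+1}=t_0=r$, substitute into $d_i=(s_i+t_i)/r-1$, and finally assemble $A(\sigma)=k+1-\sum d_i^2b_i+2\sum d_id_{i+1}$.

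The key simplification I expect, and would verify, is that although each $d_i^2$ a priori contributes a denominator $r^2$, these terms collapse: in each case the numerator of the $r^{-2}$ part is divisible by $r$, leaving $A(\sigma)$ with denominator exactly $r$. Concretely the computation should give $A(\tfrac1r(1,1))=6-r-\tfrac4r$ and $A(\tfrac1r(1,2))=3-\tfrac{2(l-1)^2}{r}$, and in the higher-genus $a=3$ cases the closed forms $A(\tfrac1r(1,3))=4-\tfrac{3(l-1)^2}{r}$ for $r=3l+1$ and $A(\tfrac1r(1,3))=3-\tfrac{3l^2-3l+2}{r}$ for $r=3l+2$; the divisibility that makes the last two work comes from polynomial factorizations such as $9l^3-3l^2+4=(3l+2)(3l^2-3l+2)$. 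Once $A(\sigma)$ is in reduced form, the defining congruence becomes $2m\equiv-(\text{numerator})\pmod r$, and the stated value of $m(\sigma)$ is confirmed by checking that $2m+(\text{numerator})$ is a multiple of $r$. For $a=1$ this reads $2m\equiv-4$, whence $m\equiv-2$; for $r=3l+1$ one has $3(l-1)^2+4(l-1)=(l-1)(3l+1)\equiv0$, confirming $m\equiv 2(l-1)$; and $(3l+2)\mid 3l^2-19l-14$ confirms the $r=3l+2$ branch $m\equiv-8(l+1)$.

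The main obstacle is organizational rather than conceptual: correctly producing the continued-fraction data $b_i$ and the sequences $s_i,t_i$ in closed form as functions of $l$, especially for $a=3$ where one must treat the two residue classes separately and track a length-three fraction. The second point requiring care is the collapse of the $r^{-2}$ terms, i.e.\ the divisibility of the quadratic and cubic numerators by $r$; without it $A(\sigma)$ would not have denominator $r$ and the congruence formulation would break down. Both steps are routine once the continued fractions are pinned down, so the argument is in the end a careful bookkeeping exercise resting on \ref{Prop:degree-formula} and \ref{Lemma:blowup-numbers}.
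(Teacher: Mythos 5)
Your overall route is the same as the paper's: compute $A(\sigma)$ from the Hirzebruch--Jung data of $r/a$, observe that the defining condition is a single linear congruence mod $r$ in which $2$ is invertible (since $r$ is odd), and solve. Your continued fractions and closed forms for $A(\sigma)$ are all correct and agree with the paper's equivalent expressions (e.g. your $A(\frac1r(1,2))=3-\frac{2(l-1)^2}{r}$ equals the paper's $5-l+\frac{l-4}{r}$), and your explicit verifications of parts (1) and (3), via $4(l-1)+3(l-1)^2=(l-1)(3l+1)$ and $3l^2-19l-14=(3l+2)(l-7)$, are correct.

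The gap is part (2), the one case where you assert the confirmation instead of performing it --- and there the check fails. With numerator $2(l-1)^2$ and the stated $m=2-l^2$, one gets $2m+2(l-1)^2=6-4l\equiv 8\pmod{2l+1}$, which is nonzero for every odd $r\ge 3$. Since $2$ is invertible mod $r$, the congruence $2m\equiv-2(l-1)^2\pmod{r}$ has the unique solution $m\equiv-(l-1)^2\equiv-l^2-2\pmod{r}$, which differs from the stated value $2-l^2$ by $4$. So part (2) as printed is false and cannot be ``confirmed''; a correct statement would be $m(\frac1r(1,2))\equiv_r-(l-1)^2$. The paper's own proof contains the same slip: its claimed solution $2-l^2$ does not satisfy its own congruence $l-4-2m\equiv_r0$. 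A clean cross-check is $r=5$, where $\frac15(1,2)\cong\frac15(1,3)$ (because $2\cdot 3\equiv 1\pmod 5$), so parts (2) and (3) must agree; part (3) with $r=3\cdot1+2$ gives $m\equiv-16\equiv 4$, which is the correct value, while part (2) with $l=2$ gives $m\equiv-2\equiv 3$. Had you carried out the $a=2$ verification as you did the others, your (correct) computation of $A$ would have exposed this discrepancy rather than confirmed the statement.
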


\begin{proof}
Recall that $r$ is odd in all cases. We compute $A(\frac1r(1,1))=6-r-\frac4r$, so $A(\sigma)+1-\frac{2m}r\equiv_\ZZ0$ reduces to $-4-2m\equiv_r0$, which as $r$ is odd gives $m(\frac1r(1,1))\equiv_r-2$.

If now $r=2l+1$, we have that $A(\frac1r(1,2))=5-l+\frac{l-4}r$; we now solve the congruence $l-4-2m\equiv_r0$ and find the solution $m\equiv_r2-l^2$.

For the $\frac1r(1,3)$ case we must separate into the cases $r=3l+1$ (here $l$ is even) and $r=3l+2$ (here $l$ is odd). In the former case, $A(\frac1r(1,3))=6-l+\frac1r(l-5)$, and the conguence $l-5-2m\equiv_r0$ has the solution $m\equiv_r2(l-1)$. For the latter case, we have $A(\frac1r(1,3))=5-l-(l+6)/r$, and the congruence $-l-6-2m\equiv_r0$ has the solution $m\equiv_r-8(l+1)$.
\end{proof}

For the general $g\ge 2$ case, with width $w\ge 2$, we can do essentially the same thing as above; in the general case there are however several choices to be made during the process, and no apparent means of identifying the ``right one''. The ambiguity in choices can be isolated to the image of the relative cycle $\alpha$. Recall that above we had $\omega(\alpha)=\alpha+\frac{h-2}h\beta+\sum \pm c_i$, where the $c_i$'s were basis elements constructed from the edges of the polygons we glued to obtain our surface; in the general case we will have a similar formula, but several possible choices for what should go in the sum. As the most interesting part of $\omega(\alpha)$ is the coefficient of $\beta$, this can be considered a minor loss.

\begin{Prop}\label{Prop:genl-case}
  Let $w\ge 2$, and let $X$ be a Riemann surface of genus $\frac12(h-1)w$ with two removed points $p_1,p_2$ and $w$ distinguished points $q_1,\ldots,q_w$, and an order $h$ automorphism fixing these points. Let $\alpha$ be a relative cycle passing from $p_1$ to $p_2$, and let $\beta$ be a cycle going once around $p_1$. Then there is a choice of cycles $e_1,\ldots,e_{(h-1)w}$ such that $\{\alpha,\beta,e_1,\ldots,e_{(h-1)w}\}$ is a basis for $H_1(X)$, and the automorphism is a power of the map $\omega$ given on homology cycles by
  \begin{itemize}
\item $\omega(\beta)=\beta$,
  \item $\omega(e_j)=
\begin{cases}
  e_{j+w} & j\le(h-2)w\\
\beta-\sum_{k\neq 1}e_{j+kw} & j>(h-2)w,
\end{cases}$ and
\item $\omega(\alpha)=\alpha+(1-\frac2h)\beta-\sum_{k=w}^{h-1}e_{kw}+(\ast)$,
  \end{itemize}
here $(\ast)$ is a $\ZZ$-linear combination of $e_j$'s, and the indices in the second item must be taken modulo $(h-1)w$.
\end{Prop}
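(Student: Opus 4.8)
The plan is to realize $X$ concretely as the degree-$h$ cyclic cover of $\PP^1$ that the preceding discussion already produced---$X_E$ is a degree-$h$ cover of $\PP^1$, totally ramified over the $w+2$ points coming from the two adjacent intersection points and the $w$ singular points on $D_E$---and to take $\omega$ to be the order-$h$ deck rotation, exactly the automorphism forced by Lemma \ref{Lemma:R-cone-monodromy}. I would build an explicit polygonal fundamental-domain model for this $\ZZ/h$-action, in the spirit of the two-polygon pictures of Propositions \ref{Prop:1311-local-monodromy} and \ref{Prop:w1-local-monodromy}, with $\omega$ acting as a rotation of the model by ``one sheet''. As a consistency check, Riemann--Hurwitz for an order-$h$, totally ramified cover of $\PP^1$ with $w+2$ branch points returns quotient genus $0$ and total genus $\frac12(h-1)w$, and exhibits the $w+2$ fixed points $p_1,p_2,q_1,\dots,q_w$ demanded by the statement.

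For the homology, the cyclic structure organizes the relevant cycles into $h$ ``sheets'', each contributing $w$ edge-cycles, for a total of $hw$ cycles that $\omega$ permutes by the rule $e_j\mapsto e_{j+w}$. The $w$ relations among them---one of which is the face relation $\beta=\sum_i e_i$ identifying $\beta$ (the loop around $p_1$) with the full boundary, the others coming from the gluing of the model---cut the $hw$ cycles down to the $2g=(h-1)w$ independent cycles $e_1,\dots,e_{(h-1)w}$, which together with $\beta$ and the relative arc $\alpha$ from $p_1$ to $p_2$ form the asserted basis. The action $\omega(\beta)=\beta$ is immediate, and $\omega(e_j)=e_{j+w}$ is again a basis element for $j\le (h-2)w$; for $j>(h-2)w$ the index $j+w$ falls among the $w$ dependent edges, and rewriting it through the relations yields precisely $\beta-\sum_{k\ne 1}e_{j+kw}$ with indices read modulo $(h-1)w$.

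The one genuinely delicate computation is $\omega(\alpha)$. As in the genus-one and width-one cases, the image arc $\omega(\alpha)$ again runs from $p_1$ to $p_2$ but crosses a rotated edge; splicing in a correction of $\tfrac1h\beta$ near each of the two punctures turns $\omega(\alpha)-\alpha+\tfrac2h\beta$ into a genuine cycle, visibly an integral combination of the $e_i$. This pins the coefficient of $\beta$ in $\omega(\alpha)$ to $1-\tfrac2h$ robustly, since that coefficient only records the two $\tfrac1h$-corrections at the two punctures; the accompanying integral combination of the $e_i$ is what the statement records as $-\sum_{k=w}^{h-1}e_{kw}+(\ast)$. Finally, since $g\ge 2$, Lemma \ref{Lemma:hyperbolic-plane-automorphisms} guarantees that the actual local monodromy automorphism (order $h$ with the same fixed-point data, by Lemma \ref{Lemma:R-cone-monodromy}) is conjugate to a power of $\omega$; adjusting the basis makes it literally a power of $\omega$, as claimed.

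I expect the main obstacle to be the explicit construction of the model for general $w$. The two-polygon gluing of the width-one case does not generalize verbatim: with $w\ge 2$ it produces the wrong number of vertex classes, so one must design a genuinely $w$-dependent fundamental domain for the $\ZZ/h$-action whose vertex classes realize the $w$ fixed points $q_j$ and whose edges carry the cyclic $e_j\mapsto e_{j+w}$ symmetry. Compounding this, the edge-part of $\omega(\alpha)$ depends on the choice of spanning relations in the model and on the chosen representative of the relative cycle $\alpha$; different admissible choices change it by integral combinations of the $e_i$, and there is no canonical normalization. This is exactly the indeterminacy the statement isolates in the term $(\ast)$, and the reason only the coefficient $1-\tfrac2h$ of $\beta$---which is insensitive to these choices---can be recorded cleanly.
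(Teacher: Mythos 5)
Your overall strategy matches the paper's: build a polygonal model of the surface on which the order-$h$ automorphism is a rotation, read off its action on a cycle basis, obtain the coefficient $1-\frac2h$ of $\beta$ from the two $\frac1h\beta$-corrections spliced in at the punctures, and invoke Lemma \ref{Lemma:hyperbolic-plane-automorphisms} together with Lemma \ref{Lemma:R-cone-monodromy} to conclude that the actual local monodromy automorphism is a power of the constructed $\omega$. But your proposal stops exactly where the paper's proof begins. You correctly identify that the two-polygon gluing of the width-one case does not generalize verbatim and that ``one must design a genuinely $w$-dependent fundamental domain'' --- and then you never produce it. That construction is the entire content of the paper's proof: one glues two regular $hw$-gons (not $h$-gons), labelling the vertices of each cyclically by $q_1,\ldots,q_w$ repeated $h$ times (anticlockwise on one, clockwise on the other), and labelling the edges of the second polygon by $c_1,\ldots,c_{hw}$ subject to three constraints: each $c_i$ joins the same pair $q_j,q_{j+1}$ as in the first polygon; $c_{i+w}$ sits $w$ steps from $c_i$ (this is precisely what makes the rotation $c_i\mapsto c_{i+w}$ descend to a well-defined automorphism of the glued surface); and $c_{i+1}$ never immediately follows $c_i$ (this is what forces the glued surface to have genus $\frac12(h-1)w$ rather than something smaller). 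The paper even counts the $(h-1)^{w-2}(h-2)$ admissible labellings, which is the precise source of the ambiguity recorded as $(\ast)$. Without this model, your claims that $\omega(e_j)=e_{j+w}$, that the out-of-range images become $\beta-\sum_{k\neq 1}e_{j+kw}$, and that $\omega(\alpha)$ has the stated form are assertions, not conclusions; the proposal is a correct plan, not a proof.

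There is also a smaller inaccuracy in your homological bookkeeping: for $w\ge 2$ the individual edges of any such model are not closed cycles, since each edge joins distinct identified vertices $q_j$ and $q_{j+1}$, so ``$hw$ edge-cycles cut down by $w$ relations'' does not literally parse. The paper's fix is to define $e_j:=\sum_{k=0}^{w-1}c_{j+k}$, a closed loop from $q_j$ back to itself built from $w$ consecutive edges; the $w$ relations are then $\sum_{k=0}^{h-1}e_{j+kw}=\beta$, one for each residue class mod $w$, rather than a single face relation $\beta=\sum_i e_i$ (summing all $hw$ of the $e_j$ would in fact give $w\beta$). Your dimension count $hw-w=(h-1)w$ happens to survive these corrections, but the corrected definitions are exactly what make the stated formula for $\omega(e_j)$, and the appearance of $\beta$ in it, come out right.
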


\begin{proof}
  As in Propositions \ref{Prop:w1-local-monodromy} and \ref{Prop:1311-local-monodromy}, we make a model for $X$ by gluing two polygons as follows: Take two regular $hw$-gons, and for the first label the centroid by $p_1$ and the vertices in anticlockwise order by $q_1,q_2,\ldots,q_w,q_1,q_2,\ldots q_w$, and label the edges by $c_1,\ldots c_{hw}$, oriented anticlockwise. On the second $hw$-gon, label the vertices in clockwise order $q_1,q_2,\ldots,q_w,q_1,q_2,\ldots q_w$, and label the edges with labels $c_1,\ldots c_{hw}$ in such a way that for each $i$, $c_{i+w}$ is $w$ steps from $c_i$ going anticlockwise, each $c_i$ is connecting the same points $q_j,q_{j+1}$ as in the first polygon, and such that $c_{i+1}$ does not follow $c_i$ going clockwise. There are a total of $(h-1)^{w-2}(h-2)$ ways of doing this: at an edge connecting $q_j$ and $q_{j+1}$, there are $h$ possible labels $c_j,c_{j+w},\ldots,c_{j+hw}$ (reading the indices modulo $hw$), and there are $w$ such sets; within each such set the order is fixed by the requirement that $c_{i+w}$ and $c_i$ are separated by $w$ steps. As $c_{i+1}$ cannot follow $c_i$, each choice rules out a choice on the adjacent edges, so there are $h-1$ choices on each edge once one choice has been made, except the final one where there are only $h-2$ as it gets constraints imposed from both sides. By rotation of the polygon, we can regard one set $c_j,c_{j+w},\ldots,c_{j+hw}$ as fixed; this gives $(h-1)^{(w-2)}(h-2)$ possible arrangements. 

The demands that $c_i$ not be followed by $c_{i+1}$ and that each $c_i$ connects the same points $q_j,q_{j+1}$ in both polygons ensure that when gluing according to the labels, we get a surface of genus $\frac12(h-1)w$. The demand that $c_i$ is separated by $w$ steps from $c_{i+w}$ preserves the order $h$ automorphism given by ``rotation by $\frac1h$'': $c_i\mapsto c_{i+w}$; this is our $\omega$.

Now define the homology basis by $e_j:=\sum_{k=0}^{w-1}c_{j+k}$, $\beta$ is equivalent to $\sum_{i=0}^{hw}c_i$, and we may take $\alpha$ to pass from $p_1$ to $p_2$ crossing $c_{hw}$. Applying $c_{hw}=\beta-\sum_{i=0}^{hw-1}c_i$, it is easy to check that
\[
\omega(e_j)=
\begin{cases}
  e_{j+w} & j\le(h-2)w\\
\beta-\sum_{k\neq 1}e_{j+kw} & j>(h-2)w,
\end{cases}
\]
and it is obvious that $\omega(\beta)=\beta$. For the relative cycle $\alpha$, as in \ref{Prop:w1-local-monodromy} we see that $\omega(\alpha)-\alpha+\frac2h\beta$ is homologous to $c_w+c_{hw}+\sum_{i=1}^{w-1}c_i+\sum_{i=1}^{w-1}c_{[i]}$, here $c_{[i]}=c_{i+kw}$ for some $k$ (this is where the ambiguity in labelling edges comes in, we can only fix these mod $w$). Rearranging using $\sum_{i=w}^{hw-1}c_i = \sum_{k=1}^{h-1}e_{kw}$, we have 
\[
\omega(\alpha)=\alpha+(1-\frac2h)\beta+\sum_{i=1}^{w-1}c_{[i]}+c_w-\sum_{k=1}^{h-1}e_{kw},
\]
and for each choice of $c_{[i]}$'s, we can of course express $\sum_{i=1}^{w-1}c_{[i]}+c_w$ in terms of the $e_j$'s, but there is no concise general formula.
\end{proof}

\begin{Ex}\label{Ex:1611-labels}
The only choice of $h,w$ with $w\ge 2$ that gives an unambiguous labelling is $h=3,w=2$, the $\frac16(1,1)$ $R$-cone. A picture of the labelling is given in Figure \ref{fig:1611-labels}.

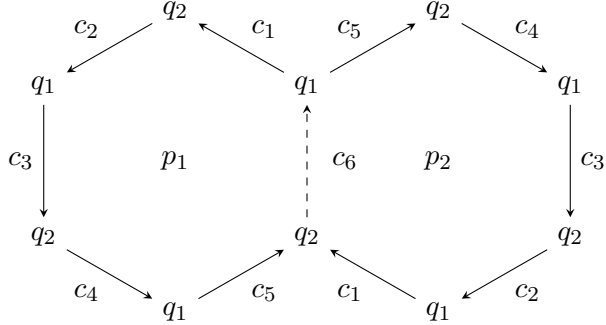
\begin{figure}
\centering
\begin{tikzpicture}[auto,>=stealth]
\node (p1) at (0,0) {$p_1$};
\node (q1) at (30:2) {$q_1$};
\node (q2) at (90:2cm) {$q_2$};
\node (q3) at (150:2cm) {$q_1$};
\node (q4) at (-150:2cm) {$q_2$};
\node (q5) at (-90:2cm) {$q_1$};
\node (q6) at (-30:2cm) {$q_2$};
\node (p2) at (4*cos 30,0) {$p_2$};
\node (q10) at (4*cos 30,2) {$q_2$};
\node (q9) at (4*cos 30+2*cos 30,2*sin 30) {$q_1$};
\node (q8) at (4*cos 30+2*cos 30,-2*sin 30) {$q_2$};
\node (q7) at (4*cos 30,-2) {$q_1$};
\draw [->] (q1) to node [swap] {$c_1$} (q2);
\draw [->] (q2) to node [swap] {$c_2$} (q3); 
\draw [->] (q3) to node [swap] {$c_3$} (q4); 
\draw [->] (q4) to node [swap] {$c_4$} (q5); 
\draw [->] (q5) to node [swap] {$c_5$} (q6); 
\draw [->,dashed] (q6) to (q1);
\draw [->] (q1) to node {$c_5$} (q10); 
\draw [->] (q10) to node {$c_4$} (q9); 
\draw [->] (q9) to node {$c_3$} (q8); 
\draw [->] (q8) to node {$c_2$} (q7);
\draw [->] (q7) to node {$c_1$} (q6);

\node (c6label) at (2*cos 30 +0.5,0) {$c_6$};

\end{tikzpicture}
\caption{Labels for the local model for $X_E$, for $h=3,w=2$.}\label{fig:1611-labels}
\end{figure}

\end{Ex}

\begin{Ex}\label{Ex:h3w3labels}
  For $h=w=3$ (the primitive $T$-cone of height 3), there are $(3-1)^{(3-2)}(3-2)=2$ possible labellings. If the first $9$-gon is labelled $c_1,\ldots,c_9$ (in anticlockwise order), the second must be labelled (in anticlockwise order) either $c_2,c_7,c_3,c_5,c_1,c_6,c_8,c_4,c_9$, or $c_5,c_1,c_3,c_8,c_4,c_6,c_2,c_7,c_9$.
\end{Ex}

\begin{Thm}\label{Thm:monodromy-sing-content}
  Let $P$ be a Fano polygon, let $f$ be a maximally mutable Laurent polynomial with $Newt(f)=P$, and let $L_f$ be the associated Picard-Fuchs operator. The monodromy at zero of $L_f$ determines and is determined by the singularity content of $P$ (thought of as a multiset).
\end{Thm}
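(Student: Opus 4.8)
The plan is to prove the two implications separately, reading the ``monodromy at zero'' as the conjugacy class of the automorphism $\omega$ of $H_1(X_t,\ZZ)$ assembled in Sections \ref{subsec:iscts} and \ref{subsec:R-cone-monodromy}. Throughout I use that for a maximally mutable $f$ the $f$-rigid cones are exactly the $R$-cones, so by Theorem \ref{Thm:mutable-genus} the $T$-cones contribute nothing to $H_1(X_t)$ and all of the genus comes from the $R$-cones.

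For the claim that the singularity content $(n,\mathcal B)$ determines $\omega$, I would note that $\omega$ is built from two pieces. The contribution of the vertices is $\alpha\mapsto\alpha-(K_\Pi^2)\beta$, $\beta\mapsto\beta$ by Proposition \ref{Prop:degree-contr}, and $K_\Pi^2=12-n-\sum_{\sigma\in\mathcal B}A(\sigma)$ by Proposition \ref{Prop:degree-formula}, which depends only on $(n,\mathcal B)$ since $A(\sigma)$ is a function of the type alone. The contribution of each $R$-cone of type $\sigma$ is the local block supplied by Propositions \ref{Prop:1411-local-monodromy}, \ref{Prop:1311-local-monodromy}, \ref{Prop:w1-local-monodromy} and \ref{Prop:genl-case}, an explicit order-$h$ automorphism whose only remaining freedom, the power of the standard automorphism, is pinned down by the integrality of the global monodromy, i.e.\ by $A(\sigma)$, exactly as in Theorem \ref{Thm:global-monodromy-1311} and Proposition \ref{Prop:powers}. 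As $\beta$ is common to all blocks and fixed, the blocks commute and assemble into a single $\omega$ whose conjugacy class depends only on $(n,\mathcal B)$.

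For the converse I would recover $(n,\mathcal B)$ intrinsically from $\omega$ together with the integral lattice. Write $V_\sigma\subset H_1(X_t)$ for the subspace spanned by the cycles of an $R$-cone $\sigma$. On $V_\sigma$ the map $\omega$ acts, modulo $\beta$, with characteristic polynomial a power of $(\lambda^h-1)/(\lambda-1)$, hence with no eigenvalue $1$; combined with $\omega(\beta)=\beta$ and $\omega(\alpha)\neq\alpha$ this forces the generalized $1$-eigenspace of $\omega$ to be the two-dimensional unipotent block $\langle\alpha',\beta\rangle$. Thus $\ker(\omega-\mathrm{id})=\langle\beta\rangle$ recovers $\beta$ canonically, the sum of the remaining generalized eigenspaces recovers $\bigoplus_\sigma V_\sigma$, and splitting this into $\omega$-invariant pieces recovers each $V_\sigma$. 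From a single block the order $h$ and the dimension $(h-1)w$ (twice the genus $\tfrac12(h-1)w$) give the pair $(h,w)$. To upgrade this to the full type $\tfrac1r(1,a)$ I would use the canonical class of $(\omega-\mathrm{id})\alpha$ in the finite coinvariant group of $V_\sigma$ modulo $\beta$, which by the explicit descriptions in Propositions \ref{Prop:w1-local-monodromy} and \ref{Prop:genl-case} records the power $m(\sigma)$; inverting the relations of Proposition \ref{Prop:powers} then reads off $a$, so $\mathcal B$ is determined. Finally, the off-diagonal integer $c'$ of the unipotent block (well-defined once $\beta$ and $\alpha'$ are taken primitive) is, by the assembly of Propositions \ref{Prop:degree-contr} and \ref{Prop:degree-formula}, an affine-linear function of $n$ whose coefficients depend only on $\mathcal B$; since $\mathcal B$ is now known this solves for the number $n$ of $T$-cones.

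The main obstacle is the recovery of the finer invariant $a$ in the converse direction. The rational canonical form of a width-one block is $(\lambda^r-1)/(\lambda-1)$ for every admissible $a$, so the type is invisible to $\omega|_{V_\sigma}$ alone and must be extracted from the way $\alpha$ threads the block; since passing to the canonical eigenvector $\alpha'$ erases the block components of $\alpha$, the argument genuinely requires the coinvariant refinement above (equivalently, the symplectic intersection form on $H_1(X_t)$), and one must check both that this datum really determines $m(\sigma)$ by inverting Proposition \ref{Prop:powers} and that the identification is compatible with the equivalence $\tfrac1r(1,a)\sim\tfrac1r(1,a^{-1})$ coming from $GL(N)$. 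Two further points need care: when several $R$-cones share the same order $h$ the generalized eigenspaces coincide and the splitting into individual $V_\sigma$ must be made canonical using the lattice (each $V_\sigma$ a primitive invariant sublattice), and for width $w\ge2$ one must verify that the labelling ambiguity of Proposition \ref{Prop:genl-case} changes only the chosen basis and not the conjugacy class of $\omega$.
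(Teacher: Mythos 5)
Your forward direction is essentially the paper's (the paper cites the same propositions and declares the implication clear). The divergence, and the gap, is in the converse. The paper's own proof is deliberately modest: it assumes the monodromy matrix is \emph{given in the constructed basis} $\{\alpha,\beta,a^j_i,\ldots\}$, so the entry $B$ and the blocks $\mathbf{M_i}$ come labelled; it then reads $h_i$ off as the order of $\mathbf{M_i}$, gets $w_i$ from the block size $2g_i=w_i(h_i-1)$, hence $r_i=h_iw_i$, identifies $a_i$ by matching $\mathbf{M_i}$ against the finite list of powers from Proposition \ref{Prop:powers}, and finally solves for $k$ from $B$. You instead attempt to reconstruct the singularity content from the $GL(H_1(X_t,\ZZ))$-conjugacy class of $\omega$ alone, which is strictly harder, and your first step fails. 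You claim that $\omega(\beta)=\beta$ and $\omega(\alpha)\neq\alpha$ force the generalized $1$-eigenspace to be a nontrivial unipotent block, so that $\ker(\omega-\mathrm{id})=\langle\beta\rangle$ recovers $\beta$ canonically. But $\alpha$ can be corrected by cycle classes: write $(\omega-\mathrm{id})\alpha=B\beta+w_0$ with $w_0$ in the span $W$ of the block cycles, and let $\varphi$ be the induced action on $W$ modulo $\beta$; since $\varphi$ has no eigenvalue $1$, the vector $\alpha'=\alpha-(\varphi-\mathrm{id})^{-1}w_0$ makes sense over $\QQ$ and satisfies $(\omega-\mathrm{id})\alpha'=B'\beta$ for a scalar $B'$. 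Whenever $B'=0$, the fixed space of $\omega$ is two-dimensional and the unipotent block is trivial. This is not hypothetical: it is precisely the degenerate case recorded in Section \ref{sec:ramification}, where the paper states that $6$ of the $26$ mutation classes with basket $\{n\times\frac13(1,1)\}$ have the two-dimensional eigenspace. In those cases $\beta$ is not determined by $\omega$, and everything downstream in your argument---the coinvariant class of $(\omega-\mathrm{id})\alpha$ used to pin down $a$, and the off-diagonal integer used to solve for the number of $T$-cones---loses its footing. You flag several real obstacles yourself (recovering $a$, splitting coincident eigenspaces, the labelling ambiguity of Proposition \ref{Prop:genl-case}), but not this one, and it is the fatal one as written.

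Even away from the degenerate case, the integral structure does not behave as you assume. The vector $\alpha'$ need not be a lattice vector: in the $\frac13(1,1)$ case one computes $\alpha'=\alpha+\sum_j(\tfrac13 a_1^j-\tfrac13 a_2^j)$ and $B'=(k+2n-12)-\tfrac{n}{3}$, so the saturation of $\langle\alpha',\beta\rangle$ in $H_1(X_t,\ZZ)$ is $\ZZ\beta\oplus\ZZ(3\alpha')$ and the integer one can actually read off is $3B'=3k+5n-36$, not $B$; moreover, over $\ZZ$ this integer is a conjugacy invariant only up to sign, so solving for $k$ requires ruling out collisions such as $|3k+5n-36|$ taking the same value for $(k,n)=(6,3)$ and $(8,3)$. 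None of these issues arise in the paper's proof because the basis is fixed there. So, as written, your converse is not established: either adopt the paper's convention that the monodromy comes with its adapted basis (in which case your argument collapses to the paper's), or substantially more work is needed to make the reconstruction genuinely conjugation-invariant---and one should be aware that in the degenerate case the naive intrinsic statement may simply require a different proof.
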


\begin{proof}
  It is clear by \ref{Prop:degree-contr}, \ref{Prop:degree-formula}, \ref{Prop:1311-local-monodromy}, \ref{Prop:w1-local-monodromy} and \ref{Prop:genl-case} that the singularity content determines the monodromy.

Suppose now that the singularity content of $P$ is $(k,\mathcal{B})$, and that we are given the monodromy matrix in the bases we have described. By \ref{Prop:1311-local-monodromy}, \ref{Prop:w1-local-monodromy} and \ref{Prop:genl-case} this matrix is of the form 
\[
\begin{pmatrix}
  1 & 0& 0\\
B & 1& \mathbf{r}\\
\mathbf{c} & 0 & 
\begin{matrix}
  \mathbf{M_1} &             &        &\\
              & \mathbf{M_2} &        &\\
              &              &\ddots &\\
 &                         &        &\mathbf{M_n}\\
\end{matrix}
\\
\end{pmatrix}
\]
here $\mathbf{r}$ and $\mathbf{c}$ are some vectors and $\mathbf{M_i}$ are block matrices of size $2g_i\times 2g_i$ where $g_i$ is the genus of the local piece $X_{E_i}$, and $B=12-k-\sum_{\sigma\in\mathcal{B}}m(\sigma)$, where $m(\sigma)=A(\sigma)-(1-\frac{2p}h)$ and $p$ is the power of the local monodromy map required to make $A(\sigma)-(1-\frac{2p}h)$ an integer (as in Proposition \ref{Prop:powers}).

Each block $\mathbf{M_i}$ is associated to an $R$-cone of type $\frac1{r_i}(1,a_i)$. The sizes $2g_i$ of the blocks $\mathbf{M_i}$ give us the $r$ in $\frac1r(1,a)$, as $2g_i=w_i(h_i-1)$ ($h_i$ and $w_i$ are the height and width of the $R$-cone, respectively) and necessarily the matrix $\mathbf{M_i}$ has order $h_i$, so we can solve for $w_i$ and get $r_i=h_iw_i$; the $a_i$ can be deduced by finding the correct power $p_i$ (as done in \ref{Prop:powers}) for each $R$-cone of this height and width (the list is finite) and selecting the one that equals $\mathbf{M_i}$.

Now having identified the singularity basket $\mathcal{B}$, we deduce the number of $T$-cones as $k=12-B-\sum_{\sigma\in\mathcal{B}}m(\sigma)$.

That we cannot recover the cyclical order of the singularity basket follows from the easily verified fact that the local monodromy automorphisms over the $f$-rigid cones commute; also we may reorder the blocks $\mathbf{M_i}$ as desired by reordering the basis.
\end{proof}

\begin{Ex}
  The numbers $m(\sigma)$ for the cases $\frac1r(1,a)$ for $a=1,2,3$ are 
\begin{itemize}
\item $m(\frac1r(1,1)) = 5-r$,
\item $m(\frac1r(1,2)) = 0$, and
\item $m(\frac1r(1,3)) = \begin{cases}0 & \text{ if }r\equiv_3 1\\ -\frac23(r+1) & \text{ if }r\equiv_32\end{cases}$,
\end{itemize}
as can be easily computed from \ref{Prop:w1-local-monodromy} and \ref{Prop:powers}.
\end{Ex}

\begin{Cor}\label{Cor:eigenvalues-of-monodromy}
  With the assumptions of Theorem \ref{Thm:monodromy-sing-content}, suppose the singularity basket contains $n_i$ $R$-cones of height $h_i$. Then the monodromy of $L_f$ at zero has eigenvalues 1 with multiplicity 2, and each $h_i$'th root of unity (other than 1) with multiplicity $n_i$.
\end{Cor}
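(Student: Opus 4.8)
The plan is to read the eigenvalues off the characteristic polynomial of the monodromy automorphism $\omega$, exploiting the block-triangular shape of its matrix established in the proof of Theorem~\ref{Thm:monodromy-sing-content}. In the basis $\{\alpha,\beta\}\cup\bigcup_i(\text{block }i)$ the matrix of $\omega$ has the form
\[
\begin{pmatrix} 1 & 0 & 0\\ B & 1 & \mathbf{r}\\ \mathbf{c} & 0 & \mathbf{M}\end{pmatrix},\qquad \mathbf{M}=\mathrm{diag}(\mathbf{M}_1,\dots,\mathbf{M}_n),
\]
where $\omega(\beta)=\beta$, the class $\alpha$ is sent to $\alpha$ plus lower terms, and each $\mathbf{M}_i$ is the local monodromy on the cycles internal to $X_{E_i}$. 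Since the first row is $(1-\lambda,0,\dots,0)$ in $\omega-\lambda I$, cofactor expansion along it and then the block-upper-triangularity of the resulting minor (whose $\beta$-column has a single nonzero entry) give
\[
\det(\omega-\lambda I)=(1-\lambda)^2\prod_{i}\det(\mathbf{M}_i-\lambda I).
\]
This isolates the eigenvalue $1$ contributed by $\alpha$ and $\beta$, provided the blocks contribute none, so the problem reduces to the characteristic polynomial of each $\mathbf{M}_i$.

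Next I would compute $\det(\mathbf{M}_i-\lambda I)$ for an $R$-cone of height $h$ and width $w$ (so $\mathbf{M}_i$ is $(h-1)w\times(h-1)w$ and, by Lemma~\ref{Lemma:R-cone-monodromy}, of order $h$). For $w=1$ the description in Proposition~\ref{Prop:w1-local-monodromy} shows, after projecting away $\beta$, that $\mathbf{M}_i$ is the companion matrix of $\tfrac{\lambda^{h}-1}{\lambda-1}=1+\lambda+\cdots+\lambda^{h-1}$, because $c_j\mapsto c_{j+1}$ for $j<h-1$ and $c_{h-1}\mapsto-\sum_{j=1}^{h-1}c_j$; its roots are precisely the $h$-th roots of unity other than $1$, each simple. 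For general $w$ the map of Proposition~\ref{Prop:genl-case} is ``rotation by $\tfrac1h$'', $e_j\mapsto e_{j+w}$ with the wrap-around relation at the top, and reindexing the $e_j$ by residue modulo $w$ splits $\mathbf{M}_i$ into $w$ commuting pieces, each isomorphic to the width-one companion block; the labelling ambiguity $(\ast)$ of that proposition lives only in the $\alpha$-row and never enters $\mathbf{M}_i$. Hence
\[
\det(\mathbf{M}_i-\lambda I)=\Bigl(\tfrac{\lambda^{h}-1}{\lambda-1}\Bigr)^{w},
\]
so each $h$-th root of unity $\neq1$ occurs in $\mathbf{M}_i$ with multiplicity $w$, while $1$ does not occur (the polynomial equals $h\neq0$ at $\lambda=1$). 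I would sanity-check this against the explicit block for $\tfrac16(1,1)$ in Example~\ref{Ex:1611-labels}, where $(h,w)=(3,2)$ and $\mathbf{M}_i$ is two copies of $\left(\begin{smallmatrix}0&-1\\1&-1\end{smallmatrix}\right)$, giving $(\lambda^2+\lambda+1)^2$.

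Finally I would assemble the global answer: as no block contributes the eigenvalue $1$, the factor $(1-\lambda)^2$ accounts for the entire eigenvalue-$1$ part, i.e.\ multiplicity $2$ (the classes $\alpha,\beta$); and each $h_i$-th root of unity other than $1$, being a simple root of $\tfrac{\lambda^{h_i}-1}{\lambda-1}$, picks up multiplicity $w$ from every height-$h_i$ $R$-cone of width $w$, so its total multiplicity is the number $n_i$ of height-$h_i$ cones (which is exactly the count $n_i$ in the width-one situation governed by Propositions~\ref{Prop:1311-local-monodromy} and~\ref{Prop:w1-local-monodromy}). The main obstacle is the width $\geq 2$ step: one must confirm that the ambiguously-presented block of Proposition~\ref{Prop:genl-case} genuinely decouples into $w$ companion blocks. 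The cleanest route is the conceptual observation that the permutation $c_i\mapsto c_{i+w}$ of the $hw$ glued edges consists of exactly $w$ disjoint $h$-cycles, so on the genus part of $H_1(X_{E_i})$ the order-$h$ map is $w$ copies of a single primitive rotation; this pins down the characteristic polynomial and bypasses the labelling ambiguity. Equivalently, one could obtain the same multiplicities from the Chevalley--Weil formula applied to the $\ZZ/h$-cover $X_{E_i}\to\PP^1$ of Lemma~\ref{Lemma:R-cone-monodromy}.
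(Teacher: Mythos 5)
Your route---reading the eigenvalues off the block-triangular matrix assembled in the proof of Theorem \ref{Thm:monodromy-sing-content}---is exactly the intended one: the paper states this corollary without a separate proof, as an immediate consequence of that block structure. Your reduction $\det(\omega-\lambda I)=(1-\lambda)^2\prod_i\det(\mathbf{M_i}-\lambda I)$ is correct, as is the identification of the width-one blocks (Propositions \ref{Prop:1311-local-monodromy} and \ref{Prop:w1-local-monodromy}) with companion matrices of $1+\lambda+\cdots+\lambda^{h-1}$, and the decoupling of a width-$w$ block into $w$ such companion blocks (via the $w$ disjoint $h$-cycles of $c_i\mapsto c_{i+w}$, or Chevalley--Weil applied to the cover of Lemma \ref{Lemma:R-cone-monodromy}); this gives $\det(\mathbf{M_i}-\lambda I)=\bigl((\lambda^{h}-1)/(\lambda-1)\bigr)^{w}$.

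The problem is the final assembly, which is a non sequitur and hides a genuine discrepancy with the statement being proved. By your own computation, an $R$-cone of height $h$ and width $w$ contributes multiplicity $w$---not $1$---to every nontrivial $h$-th root of unity, so the total multiplicity is $\sum_{\sigma}w_\sigma$ over the height-$h_i$ cones $\sigma$ in the basket, and this equals $n_i$ only when all those cones have width one. The sentence ``picks up multiplicity $w$ from every height-$h_i$ $R$-cone of width $w$, so its total multiplicity is the number $n_i$'' is therefore not a valid inference, and the gap is not hypothetical: for a single $\frac14(1,1)$ cone ($h=2$, $w=2$) the paper's own Proposition \ref{Prop:1411-local-monodromy} gives the block $-I_2$, so $-1$ occurs with multiplicity $2$, not $n_i=1$; your own sanity check for $\frac16(1,1)$, with block characteristic polynomial $(\lambda^2+\lambda+1)^2$, contradicts the claimed count in the same way. (A smaller caveat of the same kind: if one height divides another, say cones of heights $3$ and $9$ both occur, the multiplicities add across heights.) What you have actually proved is the corrected statement: eigenvalue $1$ with multiplicity exactly $2$, and any other root of unity $\mu$ with multiplicity $\sum w_\sigma$, summed over cones $\sigma$ with $\mu^{h_\sigma}=1$. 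This agrees with the corollary as printed precisely when every $R$-cone in the basket has width one (e.g.\ baskets consisting of $\frac13(1,1)$'s or $\frac1r(1,1)$'s with $r$ odd); otherwise the corollary itself requires this restatement, and you should say so explicitly rather than fold the mismatch into a parenthesis.
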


%%%%%%%%%%%%%%%%%%%%%%%%%%%%%%%%%%%%%%%%%%%
%
%  SECTION: RAMIFICATION
%
%%%%%%%%%%%%%%%%%%%%%%%%%%%%%%%%%%%%%%%%%%%

\section{Ramification and degree of $L_f$}\label{sec:ramification}

We now have a good description of the monodromy of $L_f$ around the origin. From this we can deduce some information about $L_f$, for instance we already know from \ref{Cor:order-of-L_f} that the order of $L_f$ (i.e. the highest degree in the differential variable $\nabla_t$) is twice the mutational genus. It is more difficult to prove anything about the degree (i.e. the degree in the variable $t$ of the leading term of $L_f$). We do have some observations and conjectures, however.

A local system $V$ on $\PP^1\setminus S$ (where $S$ is a finite set) has monodromy $T_s$ around each point $s\in S$, and we can gather up some information about the total monodromy group in a quantity called the \emph{ramification index} of $V$, defined by
\[
rf(V) = \sum_{s\in S}dim(V_x/V_x^{T_s}) - 2rk(V),
\]
where $x\in \PP^1\setminus S$ is some point (it doesn't matter which, as $T_s$ is only defined up to conjugation, i.e. up to choice of base point). It is a general fact that $rf(V)\ge 0$, in particular local systems with $rf(V)=0$ seem interesting in their own right (also see \cite{CCGGK}). 

The ramification index measures the sizes of the eigenspaces associated to the eigenvalue 1 at each singular point. With $V=Sol(L_f)$ at the singular point $t=0$, we see that eigenspace has dimension either one or two, depending on whether the number $B$ defined in the proof of Theorem \ref{Thm:monodromy-sing-content} is one or zero, respectively. Both cases occur, for instance there are 26 mutation classes of polygons with singularity content $(k,\{n\times \frac13(1,1)\})$ (see \cite{Minimal-polygons}), and of these 6 have $B=0$ and the rest have $B>0$.

The origin thus contributes either $dim(Sol(L_f))-1$ or $dim(Sol(L_f))-2$ to the ramification. We can re-express the ramification defect as
\[
rf(Sol(L_f))=\sum_{s\in S\setminus\{0\}}dim(Sol(L_f)_x/Sol(L_f)_x^{T_s})-2g_{mut}(Y_P)-\delta
\]
where $S$ is the singular locus of $L_f$, $\delta$ is 1 or 2, and $rk(Sol(L_f))=2g_{mut}$. From now on, we write $rf(L_f)$ for $rf(Sol(L_f))$ for simplicity. Now let $E_i$ be the dimension of the eigenspace of 1 at the singular point $s_i\in S\setminus\{0\}$, then using $|S\setminus \{0\}|=deg(L_f)$, we have after some rearrangement that (writing $d=deg(L_f)$ and $g=g_{mut}$)
\[
rf(Sol(L_f))=2g(d-1)-\delta-\sum E_i.
\]
We have some empirical evidence of some further information: for those instances of $L_f$ that have been explicitly computed, which are the smooth Fano polygons and several of those with singularity content $(k,\{ n\times\frac13(1,1)\})$, a pattern emerges:

\begin{Conj}\label{Conj:degree-and-rf-conjectures}
Let $P$ be a Fano polygon with singularity content $(k,\{ n\times\frac13(1,1)\})$, let $f$ be a maximally mutable Laurent polynomial with $Newt(f)=P$, and let $L_f$ be the associated Picard-Fuchs operator. Then
\begin{enumerate}
\item the degree of $L_f$ is equal to $g^2+3g-1+2g\cdot rf(L_f)$, and
\item the ramification index $rf(L_f)$ is equal to $n+k_{eff}-3$, where $k_{eff}$ is the number of multiple points on the curve $f=0$.
\end{enumerate}
\end{Conj}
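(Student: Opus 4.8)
Both quantities are global invariants of the pencil of curves $\widetilde\tau:\widetilde{Y_P}\to\PP^1$, so the plan is to read them off from the geometry of this fibration together with the Fuchsian structure of $L_f$. The crucial observation is that the fibre over $t=\infty$ is (the strict transform of) the curve $\{f=0\}$, whose singularities are by Proposition \ref{Prop:mutation-multiple-points} exactly the $k_{eff}$ multiple points, while the fibre over $t=0$ is the boundary divisor $D$ analysed in Section \ref{sec:monodromy}; the remaining finite singular fibres should, for generic $f$, be Lefschetz, with a single node each. Since $L_f$ arises from a variation of Hodge structure it is regular singular, so $\dim(V_x/V_x^{T_s})$ at each singular value is computed by the local monodromy, and I would attack both parts through Euler-characteristic additivity over $\widetilde\tau$ and the Fuchs relation for $V=Sol(L_f)$. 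Throughout I use that for singularity content $(k,\{n\times\tfrac13(1,1)\})$ one has $g=g_{mut}=n+1$ by \ref{Thm:mutable-genus}.

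\textbf{Part (2).} Regrouping the defining sum of $rf(L_f)$ into contributions from $t=0$, from $t=\infty$, and from the finite nodal fibres gives
\[
rf(L_f)=\dim(V/V^{T_0})+\dim(V/V^{T_\infty})+\#\{\text{nodal fibres}\}-2\,\rk(V),
\]
each nodal fibre contributing $1$. The term $\dim(V/V^{T_0})=2g-\delta$ is known from \ref{Cor:eigenvalues-of-monodromy} and the Jordan structure at the origin ($\delta\in\{1,2\}$), and additivity of Euler characteristics over $\widetilde\tau$ yields $\#\{\text{nodal fibres}\}=\chi(\widetilde{Y_P})-\chi(X_0)-\chi(X_\infty)$, all three terms being computable from the combinatorics of $P$ (the blow-ups resolving $Y_P$ and the $k_{eff}$ multiple points) together with the necklace description of $D$. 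The last input is $\dim(V/V^{T_\infty})$, which I would extract from the local type of the multiple points of $\{f=0\}$: each ordinary $h$-uple point, or the tangential degeneration described in the Remark after \ref{Thm:mutable-genus}, pins down the vanishing cohomology at $\infty$. Substituting, the contributions should organise into $n$ (from the $R$-cones at $0$), $k_{eff}$ (from the multiple points at $\infty$) and $-3$ (from the three distinguished fibres), giving $rf(L_f)=n+k_{eff}-3$.

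\textbf{Part (1).} Here I would use that $L_f$ is Fuchsian of order $\mu=2g$ by \ref{Cor:order-of-L_f}, so its degree is the degree of the leading coefficient $p_\mu(t)$, i.e.\ the number of finite singular points counted with the multiplicity prescribed by the indicial equation. These split into \emph{true} singular points (nontrivial monodromy, controlled by Part (2) and Section \ref{sec:monodromy}) and \emph{apparent} ones, where the monodromy is trivial but the exponents are shifted by a positive integer. The Fuchs relation
\[
\sum_{s}\Big(\sum_{j=1}^{\mu}\rho_{s,j}\Big)=\binom{\mu}{2}\big(|S|-2\big)
\]
relates the total exponent sum to $|S|$; feeding in the local exponents at $0$ and $\infty$ (determined by the monodromy eigenvalues of \ref{Cor:eigenvalues-of-monodromy} together with the Hodge filtration forced by minimality of $L_f$ and holomorphy of $\pi_f$), and at the nodal points, and recording the integer excess at each apparent singularity, one solves for $\deg(L_f)$. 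The quadratic term $g^2$ should emerge from the factor $\binom{2g}{2}$ weighted against these excesses, and $rf(L_f)$ should enter precisely through the exponent deficit at the true singular points.

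\textbf{Main obstacle.} The serious difficulty is the global, non-local information. For Part (2) this is twofold: computing $\dim(V/V^{T_\infty})$ (equivalently $\chi(X_\infty)$ and the blow-up count) in closed form in terms of $k_{eff}$, and justifying that for generic $f$ every finite fibre away from $0,\infty$ really is nodal with one node, with no higher degenerations or coincidences. For Part (1) the crux is controlling the \emph{number and exponent-excess of the apparent singularities}: the Fuchs relation fixes the total excess but not its distribution, and the appearance of $g^2+3g-1+2g\cdot rf(L_f)$ rather than a purely local count reflects exactly this indeterminacy, which is why it remains conjectural. A complete proof would need either an independent formula for $\deg p_\mu$, via an explicit cyclic-vector or Lairez-type reduction, or a Hodge-theoretic constraint forcing the apparent singularities into their extremal configuration; pinning down the precise integer parts of the exponents at $0$ and $\infty$ (not merely their classes modulo $\ZZ$) is the technical heart of the matter.
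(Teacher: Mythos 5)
This statement is a \emph{conjecture}, and the paper contains no proof of it: the only support offered is empirical, namely the explicitly computed operators $L_f$ for smooth Fano polygons and several polygons with basket $\{n\times\frac13(1,1)\}$ (Example \ref{Ex:ord-deg-rf}), and the remark immediately following the conjecture even exhibits a failure of the formula outside this class (for the polygon with a $\frac15(1,1)$ cone the predicted degree is 17, the actual value 19). So there is no paper argument to compare yours against; the only question is whether your sketch supplies a proof the author did not have, and it does not --- as you yourself concede in your ``Main obstacle'' paragraph, both parts of your outline terminate in exactly the open problems that make the statement conjectural.

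To name the gaps concretely: for part (2), your regrouping of the ramification sum requires (a) that every finite singular value other than $0,\infty$ is a one-nodal Lefschetz fibre contributing exactly $1$, and (b) a closed-form evaluation of $\dim(V/V^{T_\infty})$ and of the Euler-characteristic terms; neither is established, and (a) is a genuine genericity assertion about the pencil that nothing in the paper (or your sketch) proves. For part (1), the obstruction is worse: $\deg(L_f)$ counts the roots of the leading coefficient of the \emph{minimal} operator, which include apparent singularities where the monodromy is trivial; indeed the paper's closing computation shows that, granting the conjecture, at least $g^2+g-2+(2g-\delta)rf(L_f)$ such points must exist. These points are invisible both to your fibration/Euler-characteristic bookkeeping (they correspond to no degenerate fibre) and to the Fuchs relation, which constrains only the total exponent sum and cannot distribute the integer excesses among unknown apparent singular points. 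Since controlling precisely this global, non-local data is the content of the conjecture, your proposal is a reasonable research plan but not a proof, and it does not advance beyond the state of knowledge recorded in the paper.
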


\begin{Rem}The number $k_{eff}$ here is equal to $k$ for the generic MMLP's, and drops by one whenever two $T$-cones on the same edge of $P$ have the same associated factor $(\gamma+\eta x)$ in $f$. Thus, the minimal possible value for $rf(L_f)$ occurs when all the $T$-cones have the same factor (e.g. in the standard MMLP case), and is equal to the minimal number of vertices of polygons mutation-equivalent to $P$, minus three. We should point out that \ref{Conj:degree-and-rf-conjectures} only applies to polygons with singularity basket $\{ n\times\frac13(1,1)\}$; it is not entirely clear how to generalize it. As an example, the polygon with vertices $(-1,0),(2,1),(3,-1)$ has singularity content $(2,\{ 1\times\frac15(1,1)\})$; the conjecture would predict a degree of 17 for the standard MMLP, but the actual value is 19.
\end{Rem}

\begin{Ex}\label{Ex:ord-deg-rf}The computations are expensive, as noted at the start of Section \ref{sec:prelims}, and the output is very large and not particularly enlightening, so we'll show only the simplest few examples here. The simplest smooth Fano polygon polygon is the one with vertices $(0,1)$, $(1,0)$ and $(-1,-1)$, with singularity content $(3,\varnothing)$: the standard MMLP is $x+y+\frac1{xy}$ and $L_f$ is $\nabla^2-27t^3(\nabla+1)(\nabla+2)$ (as before, $\nabla=t\dd_t$); this has ramification index zero, degree 3 and order 2. The second simplest smooth Fano polygon is the one with vertices $(0,1)$, $(1,0)$, $(-1,-1)$ and $(1,1)$, with singularity content $(4,\varnothing)$; here the standard MMLP is $x+y+\frac1{xy}+xy$ and $L_f$ is $8\nabla^2+t\nabla(17\nabla-1)-t^2(5\nabla+8)(11\nabla+8)-12t^3(30\nabla^2+78\nabla+47)-4t^4(\nabla+1)(103\nabla+147)-99t^5(\nabla+1)(\nabla+2)$, this has ramification index 1, order 2 and degree 5. We may observe (though we don't know how to prove this) that there is no way to mutate this polygon into one with three vertices. 

There are nonequivalent polygons with the same singularity content, but giving different ramification index for the associated $L_f$'s. The simplest example is for singularity content $(5,\{1\times \frac13(1,1)\}$: for the polygon with vertices $(-3,1)$, $(3,1)$ and $(0,-1)$, we have order 4, degree 9, and ramification index zero; for the polygon with vertices $(-1,-1)$, $(-1,2)$, $(1,1)$ and $(2,-1)$ we have order 4, degree 13, and ramification index one.
\end{Ex}

\begin{Rem}
  Conjecture \ref{Conj:degree-and-rf-conjectures} suggests a way to distinguish nonequivalent mutation classes of Fano polygons with the same singularity content; in the minimal case the value for $k_{eff}$ is the minimal number of edges (or vertices) of polygons mutation-equivalent to $P$. This number together with the singularity content could be an invariant that completely classifies Fano polygons up to mutation.
\end{Rem}

Conjecture \ref{Conj:degree-and-rf-conjectures}, if true, gives us the ramification index and degree directly from $Newt(f)$, and so gives some bounds on the $E_i$. We have
\[
\sum E_i=2g(d-1)-\delta-rf(L_f),
\]
and as there are $d$ singular points, we can write each $E_i=2g-\eps_i$, where $\sum_i\eps_i = 2g+\delta+rf(L_f)$. This number is always smaller than the degree (assuming \ref{Conj:degree-and-rf-conjectures}), so there are guaranteed to be at least $d-\sum_i\eps_i=g^2+g-2+(2g-\delta)rf(L_f)$ points with trivial monodromy and possibly more. This is quite special, as a generic local system has nontrivial monodromy at every singular point.

\section*{Acknowledgements}

This paper came out of the project to classify Fano manifolds via mirror symmetry, to which I was introduced during the PRAGMATIC 2013 Research school in Algebraic Geometry and Commutative Algebra “Topics in Higher Dimensional Algebraic Geometry” held in Catania, Italy, in September 2013. I am very grateful to Alfio Ragusa, Francesco Russo, and Giuseppe Zappalá, the organizers of the PRAGMATIC school, for making all that happen. I am also grateful to Alessio Corti and Al Kasprzyk for introducing me to this topic and for the fruitful collaboration that has followed, and to Tom Coates and Rikard Bøgvad for helpful comments.

%\bibliography{bibl}{}
%\bibliographystyle{amsalpha}

\newcommand{\etalchar}[1]{$^{#1}$}
\providecommand{\bysame}{\leavevmode\hbox to3em{\hrulefill}\thinspace}
\providecommand{\MR}{\relax\ifhmode\unskip\space\fi MR }
% \MRhref is called by the amsart/book/proc definition of \MR.
\providecommand{\MRhref}[2]{%
  \href{http://www.ams.org/mathscinet-getitem?mr=#1}{#2}
}
\providecommand{\href}[2]{#2}

\end{document}